\documentclass[12pt]{amsart}
\usepackage{hyperref}
\usepackage[utf8]{inputenc}
\usepackage{amssymb, graphicx}
\usepackage{amsfonts}
\usepackage{amsmath}
\usepackage{latexsym}
\usepackage{amscd}
\usepackage{verbatim}
\usepackage{mathrsfs}
\usepackage{color}

\allowdisplaybreaks[4]

\newcommand{\mf}{\mathfrak}
\newcommand{\g}{\mf{g}}
\newcommand{\h}{\mf{h}}

\addtolength{\textwidth}{3cm} \addtolength{\oddsidemargin}{-1.5cm}
\addtolength{\evensidemargin}{-1.5cm}
\allowdisplaybreaks[4]

\def\h{\mathfrak{h}}

\def\DD{\mathcal{L}}
\def\mh{\mathcal{H}}
\def\DD{\mathfrak{D}}

\def\bN{{\mathbb N}}
\def\bZ{{\mathbb Z}}

\def\bR{{\mathbb R}}

\def\bC{{\mathbb C}}

\def\de{\delta}

\newcommand{\Z}{{\mathbb Z}}
\newcommand{\C}{{\mathbb C}}
\newcommand{\N}{{\mathbb N}}

\newcommand{\supp}{{\operatorname{Supp}}\xspace}

\renewcommand{\phi}{\varphi}

\def\Ann{\mathrm{Ann}}
\def\Ind{\mathrm {Ind}}

\newtheorem{theorem}{Theorem}[section]

\newtheorem{lemma}[theorem]{Lemma}

\theoremstyle{remark}

\newtheorem{definition}[theorem]{Definition}

\numberwithin{equation}{section}
\def\Vir{\mathrm{Vir}}
\def\span{\mathrm{span}}
\def\supp{\mathrm{Supp}}

\def\vn{\varnothing}

\def\S{\mathcal S}

\def\1{\mathbf 1}

\begin{document}




            \title[]{Representations of the Fermion-Virasoro algebras}
            \author{Yaohui Xue, Kaiming Zhao}
            \maketitle

            \begin{abstract} Let $\delta=0$ or $\frac{1}{2}$. In this paper, we introduce  the Fermion algebra $F(\delta)$ and  the Fermion-Virasoro algebra $\S(\delta)$. They are infinite-dimensional Lie superalgebras. All simple smooth $F(\delta)$-modules,  all simple  weight $F(\delta)$-modules, all simple smooth $\S(\delta)$-modules of nonzero level, and all simple Harish-Chandra $\S(\delta)$-modules   are determined.   Surprisingly, we have found four different    $\S(\delta)$-module  structures on  free $\C[L_0]$-modules of rank $2$.

                        \vspace{0.3cm}

                        \noindent{\it MSC2020}: 17B10, 17B65, 17B66, 17B68, 17B81

                        \noindent{\it Keywords}:  Fermion algebra, Fermion-Virasoro algebra, smooth module, Harish-Chandra module
            \end{abstract}
            %

            %



            \section{Introduction}

            Throughout the paper we denote by $\bZ, \bZ^*, \bZ_+, \bN, \bR, \bC,$ and $\bC^*$ the sets of integers, nonzero integers, non-negative integers, positive integers, real numbers, complex numbers, and nonzero complex numbers, respectively. All vector spaces and algebras are assumed to be over $\bC$.
            For a Lie superalgebra $\g$, the universal enveloping algebra of $\g$ is denoted by $U(\g)$.

            In this paper we will introduce and study the Lie superalgebras $\S(0)$ and $\S(\frac{1}{2})$, which we consider  to be the super version of twisted Heisenberg-Virasoro algebra  $  \DD$ and are called Fermion-Virasoro algebras. We first recall the Lie algebra  $  \DD$ from \cite{ACKP}.

            \begin{definition}\label{def.1}
                        The  {\bf twisted Heisenberg-Virasoro algebra} ${ {\mathfrak{D}}}$ is a Lie algebra with a basis
                        $$\left\{d_{m},h_{r}, c_1, c_2,  {c}_3:m, r\in\bZ\right\}$$ and subject to the commutation relations
                        \begin{equation}\label{thva}\begin{aligned}
                        &[d_m, d_n]=(m-n)d_{m+n}+\de_{m+n,0}\frac{ m^3-m}{12} c_1,\\
                        &[d_m,h_{r}]=-rh_{m+r}+\delta_{m+r,0}(m^2+m) c_2,\\
                        &[h_{r},h_{s}]=r\de_{r+s,0} {c}_3,\\
                        &[ c_1,{ {\mathfrak{D}}}]=[ c_2,{ {\mathfrak{D}}}]=[ c_3,{ {\mathfrak{D}}}]=0,\end{aligned}
                        \end{equation}
                        for $m,n,r,s\in\bZ$.
            \end{definition}

            It is clear that ${ {\mathfrak{D}}}$ contains a copy of the Virasoro subalgebra $\Vir=\text{span}\{ c_1, d_i:  i\in\bZ\}$ and the Heisenberg algebra $  {\mh}=\bigoplus_{r\in\bZ}\bC h_r\oplus\bC  {c}_3$.

            Similarly, for $\delta=0$ or $\frac{1}{2}$, the Fermion-Virasoro algebra  $\S(\delta)$ contains a copy of the Virasoro subalgebra $\Vir$ and the Fermion algebra $F(\delta)$, see Definition 2.1.

            The representation theory over various infinite-dimensional Lie (super)algebras has been studied by many mathematicians and physicists recently. For example, Simple smooth modules for the Virasoro algebra, the Neveu-Schwarz algebra were determined in \cite{MZ, LPX}. Y. Billig and V. Futorny classified the simple Harish-Chandra modules over the Witt algebra of rank $n$ in \cite{BF}; H. Tan, Y. Yao and K. Zhao classified the simple smooth modules over the mirror and twisted Heisenberg-Virasoro algebras with nonzero level in \cite{TYZ}. For more related results, see the references within these papers.

            Let $\delta=0$ or $\frac{1}{2}$. In this paper, we classify all simple smooth $F(\delta)$-modules,  all simple  weight $F(\delta)$-modules, all simple smooth $\S(\delta)$-modules  of nonzero level, all simple Harish-Chandra $\S(\delta)$-modules, and all $\S(\delta)$-modules structures on  free $\C[L_0]$-modules of rank $2$ with non-trivial odd spaces.

            This paper is arranged as follows. In Section 2, the Fermion algebras and Fermion-Virasoro algebras are introduced, and related definitions are collected. In Section 3, we give some property for the representations of $F(\delta)$. In particular, we classify all simple smooth $F(\delta)$-modules in Lemma \ref{lemrF0} and Lemma \ref{rFm}, and classify
            simple  weight $F(\delta)$-modules in Lemma \ref{weightF}. Also we show that any smooth $F(\delta)$-module of nonzero level is completely reducible in Lemma \ref{cr2}. In Section 4, using vertex operators (infinite sum of fermionic oscillators) we classify all simple smooth $\S(\delta)$-modules  of nonzero level in Theorems \ref{smooth1} and \ref{smooth2}. In Section 5, using  properties of weight $F(\delta)$-modules we classify all simple Harish-Chandra $\S(\delta)$-modules in Theorems \ref{HC1} and   \ref{HC2}. In Section 6, we determine  all $\S(\delta)$-modules structures on  free $\C[L_0]$-modules of rank $2$ with non-trivial odd spaces. Surprisingly we have found four different classes of such modules in Theorems \ref{key}.

            At the end of  this paper we propose two questions which may be interesting to   readers.

            \section{The Fermion-Virasoro algebras}

            Throughout this paper, $\delta=0$ or $\frac{1}{2}$.
            Now we define the Fermion algebra $F(\delta)$ that was used implicitly in Section 3.4 to construct unitary highest weight representation of the Virasoro algebra in \cite{KRR}.

            \begin{definition}\label{def.2}
                        The  {\bf Fermion algebra} $F(\delta)=F(\delta)_{\bar 0}\oplus F(\delta)_{\bar 1}$ is a Lie superalgebra with a basis
                        $$\left\{\psi_{m},z:m\in\bZ+\delta\right\}$$ and subject to the commutation relations
                        \begin{equation}
                        [\psi_m,\psi_n]=\delta_{m,-n}z, \,\,[F(\delta),z]=0,
                        \end{equation}
                        for $m,n\in\bZ+\delta$, where $F(\delta)_{\bar 1}=\sum_{i\in\Z+\delta}\C\psi_i$ and $F(\delta)_{\bar 0}=\C z$.
            \end{definition}

Write $F:=F(0)$ for short.
We can consider the   Fermion algebra $F$ as the super version of the Heisenberg algebra $  {\mh}$. The operators $\psi_m$ are generally called fermionic oscillators.

            Now we define the Fermion-Virasoro algebra $\S(\delta)$   that was implicitly used in Section 3.4 in \cite{KRR} although the algebra $\S(\delta)$ was not defined there.

            \begin{definition}\label{def.3} The {\bf Fermion-Virasoro algebra}   $\S(\delta)=\S(\delta)_{\bar 0}\oplus \S(\delta)_{\bar 1}$ is the Lie superalgebra with basis $\{L_m,\psi_n,c,z:m\in\Z,n\in\Z+\delta\}$, where $L_m,c,z\in\S(\delta)_{\bar 0},\psi_n\in\S(\delta)_{\bar 1}$, and  subject to the commutation relations
                        $$\aligned {} [L_m,L_n]&=(m-n)L_{m+n}+\delta_{m,-n}\frac{m^3-m}{12}c,\\
                        [L_m,\psi_n]&=(-n-\frac{m}{2})\psi_{m+n},\\ [\psi_m,\psi_n]&=\delta_{m,-n}z,\\
                        [\S(\delta),c]&=[\S(\delta),z]=0.\endaligned$$
            \end{definition}

            Write $\S:=\S(0)$ for short. The results in this paper are proved for $\S$ and $F$ while their proofs are also applicable to those for $\S(\frac{1}{2})$ and $F(\frac{1}{2})$. The main results are given in both $\delta=0$ and $\frac{1}{2}$.

            The Lie superalgebra $\S(0)/\C z$ is isomorphic to a quotient algebra of a special case of the Lie superalgebras studied in \cite{CLW}. The Lie superalgebras $\S(\delta)/\C z$ were discussed in \cite{DCL}, where their simple Harish-Chandra modules were classified.

            It is clear that $\S$ contains a copy of the Virasoro subalgebra $\Vir=\text{span}\{c, L_i:  i\in\bZ\}$ and a  copy of  the  Fermion algebra $F=\bigoplus_{r\in\bZ}\bC \psi_r\oplus\bC z$.
            We know that  $\S$ has a $\Z$-gradation $\S=\oplus_{k\in\Z}\S_k$, where $$\S_k=\C L_k\oplus\C\psi_k,\forall k\in\Z^*;\,\,\,\,\S_0=\C L_0\oplus\C\psi_0\oplus\C c\oplus\C z.$$ Write $\S_-:=\oplus_{k\in-\N}\S_k,\S_+:=\oplus_{k\in\N}\S_k$. Then $\S$ has a triangular decomposition $$\S=\S_-\oplus\S_0\oplus\S_+.$$ Set $\Vir_\pm=\Vir\cap\S_\pm,F_\pm=F\cap\S_\pm$.

            For any $m\in\Z_+, n\in\Z$, define the following subalgebras of $\S$:
            $$\aligned 
            &\S^{(m,-\infty)}=\bigoplus_{i\in\Z_+}\C L_{m+i}\oplus\bigoplus_{j\in\Z}\C\psi_{j}\oplus\C c\oplus\C z,\\
            &F^{(n)}=\bigoplus_{i\in\Z_+}\C\psi_{n+i}\oplus\C z,\\
    &F_{[m]}=\bigoplus_{i=-m}^m\C\psi_i\oplus\C z.\endaligned$$

Let $\g=\S(\delta),F(\delta)$ or $F_{[m]},m\in\Z_+$.
For any $\mu\in\C^*$, there is an automorphism $\sigma_\mu$ of $\g$ given by
$$\sigma_\mu(L_k)=L_k,\sigma_\mu(\psi_l)=\mu\psi_l,\sigma_\mu(c)=c,\sigma_\mu(z)=\mu^2z,k\in\Z,l\in\Z+\delta.$$
For any $\g$-module $V$ and any $\mu\in\C^*$, define the $\g$-module $V^\mu$ by
$$x\cdot v=\sigma_\mu(x)v,x\in\g,v\in V.$$

 \begin{definition}\label{def.2.3} An $\S$-module $V$ is called to be of  {\bf level} $\ell$ and with  {\bf central charge} $\dot c$ if $z, c$ acts on $V$ as scalar $\ell, \dot c$ respectively.\end{definition}

            \begin{definition}\label{def.4} An $\S$-module $V$ is called a {\bf weight module} if $L_0$ acts on $V$ diagonally, i.e., $V=\oplus_{\alpha\in\C}V_\alpha$, where $V_{\alpha}=\{v\in V:L_0\cdot v=\alpha v\}$. For a weight $\S$-module $V$, write
                        $$\supp(V)=\{\alpha\in\C:  V_\alpha\neq 0\},$$
                        which is called the {\bf support set} of $V$. A weight $\S$-module $V$ is called a {\bf highest (resp. lowest) weight module} if $\supp(V)\subset\alpha+\Z_+$ (resp. $\alpha-\Z_+$) for some $\alpha\in\C$.\end{definition}

\section{Representations of $F(\delta)$}\label{SecF}
\begin{definition}
            Let $V$ be a module over a Lie superalgebra $\g$, $\h$ be a subalgebra of $\g$, and $S$ be a subset of $V$. We denote
            $$\Ann_V(\h)=\{v\in V: \h v=0\},\,\,\,\Ann_\g(S)=\{x\in\g:xS=0\}.$$
\end{definition}

\begin{definition}
            Let $\g=\oplus_{i\in\Z}\g_i$ be a $\Z$-graded Lie superalgebra. A $\g$-module $V$ is called a {\bf  smooth module} if for any $v\in V$ there exists $n\in\Z_+$ such that $\g_iv=0$ for $i>n$. The category of smooth modules over $\g$ will be denoted by $\mathfrak R_{\g}$.
\end{definition}

Smooth modules    for affine Kac-Moody algebras $\g$   were introduced and studied by D. Kazhdan and G. Lusztig  in  \cite{KL}.

Let $\mathscr V(\delta)$ be the exterior algebra generated by the odd elements $\xi_i,i\in\Z_++\delta$. Then $\mathscr V(\delta)$ has a basis
$$1,\xi_{i_1}\cdots\xi_{i_k},\,\,\,0\leqslant i_1<\dots<i_k,k\in\N.$$
Now $\mathscr V(\delta)$ has an irreducible $F(\delta)$-module structure defined by
$$\aligned &\psi_n\cdot v=\partial/\partial\xi_n\cdot v,\\ &\psi_{-n}\cdot v
 = \xi_n\cdot v,\\ &\psi_0\cdot v  = \frac{1}{\sqrt 2}(\xi_0+\partial/\partial{\xi_0})\cdot v\,\, (\text{if } \delta=0),\\ &z\cdot v= v,\endaligned$$
where $n\in(\Z_++\delta)\setminus\{0\},v\in\mathscr V(\delta)$. Write $\mathscr V:=\mathscr V(0)$ for short.

\begin{definition}
            For $m\in\Z_+$, define in the same manner above the $F_{[m]}$-submodule $\mathscr V_{[m]}$ of $\mathscr V$ by
            $$\mathscr V_{[m]}=\span\{1,\xi_{i_1}\cdots\xi_{i_k}\ :\ 0\leqslant i_1<\dots<i_k\leqslant m\}.$$
            Then $\mathscr V_{[m]}$ is a finite-dimensional simple $F_{[m]}$-module.
\end{definition}

\begin{lemma}\label{lemrF0}
            \begin{itemize}
                        \item[(1)] Suppose $V$ is a simple $F_{[m]}$-module such that $zV=0$. Then $F_{[m]}V=0 $. Consequently $\dim V=1$.
                        \item[(2)] Suppose $V$ is a simple $F(\delta)$-module such that $zV=0$. Then $F(\delta)V=0 $.  Consequently $\dim V=1$.
            \end{itemize}
\end{lemma}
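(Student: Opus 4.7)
The plan is to reduce both parts to a statement about exterior algebras. Since $zV=0$, the defining relation $[\psi_i,\psi_j]=\delta_{i,-j}z$ collapses on $V$ to $\psi_i\psi_j+\psi_j\psi_i=0$ for all $i,j$. In particular $\psi_i^{2}=0$ for every $i$ (including $\psi_0^{2}=\tfrac12 z=0$ when $\delta=0$), so the $U(F(\delta))$-action on $V$ factors through the exterior algebra $\Lambda$ generated by the $\psi_i$, and $V$ is a simple $\Lambda$-module. For (1) one replaces $\Lambda$ by its finite-dimensional analogue $\Lambda_{[m]}$ on the finitely many generators of $F_{[m]}$.

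For (1), since $\Lambda_{[m]}$ is finite-dimensional, its augmentation ideal $\Lambda_{[m]}^{+}$ (spanned by products of positive length) is nilpotent, hence annihilates every simple $\Lambda_{[m]}$-module. This gives $\psi_i V=0$ for every generator, so $F_{[m]}V=0$. With the action trivial, every subspace of $V$ is a submodule, and simplicity forces $\dim V=1$.

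For (2) the same strategy fails head-on because $\Lambda$ is infinite-dimensional and $\Lambda^{+}$ is not nilpotent; this is the crux of the argument. The fix is to show $\Lambda^{+}$ is nonetheless the unique maximal left ideal of $\Lambda$. Write $a\in\Lambda$ uniquely as $a=\alpha\cdot 1+a^{+}$ with $\alpha\in\C$ and $a^{+}\in\Lambda^{+}$. Because $a^{+}$ is a finite sum of monomials involving only finitely many $\psi_i$, it lies in a finite-rank exterior subalgebra and is therefore nilpotent, so whenever $\alpha\ne 0$ the finite geometric series yields $a^{-1}=\alpha^{-1}\sum_{k\ge 0}(-\alpha^{-1}a^{+})^{k}$. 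Hence $\Lambda\setminus\Lambda^{+}$ consists of units, so every proper left ideal lies in $\Lambda^{+}$. Picking $0\ne v\in V$, simplicity gives $\Lambda v=V\cong \Lambda/\ann_\Lambda(v)$, forcing $\ann_\Lambda(v)=\Lambda^{+}$. Thus $\psi_i v=0$ for every $i$, i.e.\ $F(\delta)V=0$, and the dimension conclusion follows as in (1).

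The main obstacle is part (2), where the Nakayama-type reasoning of part (1) does not apply verbatim. The key observation that rescues the proof is that every individual element of $\Lambda$ lies in a finite-rank exterior subalgebra, and is therefore nilpotent modulo scalars, even though $\Lambda^{+}$ as a whole is not nilpotent.
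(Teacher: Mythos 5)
Your proof is correct, but it takes a genuinely different route from the paper's. The paper's own argument is a two-line observation made directly at the level of the module: since $zV=0$, the operators $\psi_j$ anticommute on $V$, so each $\psi_iV$ is a graded submodule of $V$; it cannot equal $V$ because $\psi_i^2V=\tfrac12\delta_{i,-i}\,zV=0$, hence $\psi_iV=0$ for every $i$, and the rest follows. You instead pass to the quotient algebra $U(F(\delta))/\langle z\rangle\cong\Lambda$ and prove that $\Lambda$ is a local ring: every element of the augmentation ideal $\Lambda^{+}$ is nilpotent because it lies in a finite-rank exterior subalgebra, so everything outside $\Lambda^{+}$ is a unit and $\Lambda^{+}$ is the unique maximal left ideal, which therefore annihilates any simple module. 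This is heavier machinery for the same conclusion, but it buys something the paper's one-liner does not: it shows that $\Lambda/\Lambda^{+}\cong\C$ is the \emph{only} simple $\Lambda$-module even in the ungraded sense (what the paper later calls \emph{strictly simple}), and it isolates cleanly why the finite-rank case (genuinely nilpotent radical) and the infinite-rank case (radical only locally nilpotent) behave identically. Two small points to tighten: first, when you invoke simplicity to get $\Lambda v=V$, take $v$ homogeneous --- for a non-homogeneous $v$ the cyclic subspace $\Lambda v$ need not be $\Z_2$-graded, and simplicity (absence of proper nonzero \emph{graded} submodules) does not directly apply; second, the step ``forcing $\ann_\Lambda(v)=\Lambda^{+}$'' deserves one more clause, namely that $\ann_\Lambda(v)\subsetneq\Lambda^{+}$ would make $\Lambda^{+}/\ann_\Lambda(v)$ a nonzero proper graded submodule of $V$. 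With those adjustments the argument is complete.
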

\begin{proof}Since $z=0$, $\psi_i V$ is a submodule of $V$, which implies $\psi_i V=0$ or $\psi_i V=V$. The latter cannot occur since $\psi_i^2 V=0$. \end{proof}

\begin{lemma}\label{lemrF1}
            \begin{itemize}
                        \item[(1)] Suppose $V$ is a simple $F_{[m]}$-module such that $z$ acts on $V$ as a scalar $\lambda\in\C^*$. If there is a nonzero vector  $v\in V_{\bar 0}$ such that $\psi_k\cdot v=0$ for all $k\in\{1,\dots,m\}$, then $V\cong\mathscr V_{[m]}^{\sqrt\lambda}$ for some $\sqrt\lambda\in\C$ with $(\sqrt\lambda)^2=\lambda$.
                        \item[(2)]Suppose $V$ is a simple $F$-module  such that $z$ acts on $V$ as a scalar $\lambda\in\C^*$. If there is a nonzero vector  $v\in V_{\bar 0}$ such that $\psi_k\cdot v=0$ for all $k\in\N$, then $V\cong\mathscr V^{\sqrt\lambda}$ for some $\sqrt\lambda\in\C$ with $(\sqrt\lambda)^2=\lambda$.
            \end{itemize}
\end{lemma}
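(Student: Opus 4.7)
The plan is to produce, for a suitable square root $\sqrt\lambda$ of $\lambda$, an $F_{[m]}$- (resp.\ $F$-)module homomorphism $\phi:\mathscr V_{[m]}^{\sqrt\lambda}\to V$ (resp.\ $\phi:\mathscr V^{\sqrt\lambda}\to V$) sending the vacuum vector $1$ to $v$, and then to upgrade $\phi$ to an isomorphism by invoking simplicity of both source and target. The vacuum $1$ already lies in the even component of $\mathscr V_{[m]}^{\sqrt\lambda}$ (resp.\ $\mathscr V^{\sqrt\lambda}$) and satisfies the same defining relations as $v$: namely $\psi_k\cdot 1=0$ for the relevant range of $k\ge 1$, $z\cdot 1=\lambda\cdot 1$, and (automatically, from $[\psi_0,\psi_0]=z$) $\psi_0^2\cdot 1=\tfrac{\lambda}{2}\cdot 1$.

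To build $\phi$, introduce the subalgebra $\mathfrak{p}$ spanned by $z$, $\psi_0$, and the $\psi_k$ with $k\ge 1$ (with $k\le m$ in part (1)). The subspace $W:=\C v+\C\psi_0 v\subseteq V$ is a $\mathfrak{p}$-submodule: the $\psi_k$ with $k\ge 1$ annihilate $v$ by hypothesis and annihilate $\psi_0 v$ because $[\psi_k,\psi_0]=\delta_{k,0}z=0$; $z$ acts as $\lambda$; and $\psi_0$ interchanges $v$ and $\psi_0 v$ up to the scalar $\lambda/2$, the identity $\psi_0^2 v=\tfrac{\lambda}{2}v\ne 0$ forcing $\psi_0 v\ne 0$ and $\dim W=2$. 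The analogous $\mathfrak{p}$-submodule $W_0:=\C\cdot 1+\C\psi_0\cdot 1$ of $\mathscr V_{[m]}^{\sqrt\lambda}$ (resp.\ $\mathscr V^{\sqrt\lambda}$) has identical structure constants, so $1\mapsto v$, $\psi_0\cdot 1\mapsto\psi_0 v$ defines an isomorphism $W_0\cong W$ of $\mathfrak{p}$-modules. A PBW argument identifies $\mathscr V_{[m]}^{\sqrt\lambda}\cong U(F_{[m]})\otimes_{U(\mathfrak{p})}W_0$ (resp.\ $\mathscr V^{\sqrt\lambda}\cong U(F)\otimes_{U(\mathfrak{p})}W_0$), since the creation operators $\psi_{-i}$ with $i\ge 1$ anticommute among themselves and with $\psi_0$ and satisfy $\psi_{-i}^2=0$, so ordered PBW monomials yield a basis in each case. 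The universal property of induction then produces the desired $\phi$ with $\phi(1)=v$.

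Since $\phi$ is a nonzero map between simple modules, $\phi$ is an isomorphism, which completes both parts. The main technical point is the PBW identification of $\mathscr V_{[m]}^{\sqrt\lambda}$ (resp.\ $\mathscr V^{\sqrt\lambda}$) with the induced module from $W_0$; once this structural fact is in place, the rest is formal. The verification amounts to observing that after pushing all $\psi_k$ with $k\ge 1$ past the creation operators onto $W_0$ (where they act as zero), no further relations remain among the ordered PBW monomials in the $\psi_{-i}$, which follows from $[\psi_{-i},\psi_{-j}]=\delta_{i+j,0}z=0$ for $i,j\ge 1$ together with $\psi_{-i}^2=0$.
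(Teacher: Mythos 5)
Your proposal is correct and follows essentially the same route as the paper: both arguments induce from the two-dimensional module $\C v\oplus\C\psi_0v$ over the subalgebra spanned by $z,\psi_0$ and the annihilation operators $\psi_k$ ($k\geq 1$), identify the induced module with $\mathscr V_{[m]}^{\sqrt\lambda}$ (resp.\ $\mathscr V^{\sqrt\lambda}$) by matching PBW monomials to the basis $\xi_{i_1}\cdots\xi_{i_k}$, and conclude by simplicity. The only cosmetic difference is that you run the final homomorphism from $\mathscr V_{[m]}^{\sqrt\lambda}$ into $V$ and quote simplicity of both sides, whereas the paper exhibits $V$ as a simple quotient of the induced module and shows that module is already simple; these are interchangeable.
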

\begin{proof}
            (1) It suffices to prove that $V^{\frac{1}{\sqrt\lambda}}\cong\mathscr V_{[m]}$. Note that $z$ acts on $V^{\frac{1}{\sqrt\lambda}}$ as the identity map. Without loss of generality, assume that $\lambda=1$.

            Since $\psi_0^2\cdot v=\frac{1}{2}[\psi_0,\psi_0]\cdot v=\frac{1}{2}v$, $\C v\oplus\C\psi_0v$ is a simple $F_{[0]}$-module. Also, the fact that $\psi_k\cdot(\C v\oplus\C\psi_0v)=0$ for all $k=1,\dots,m$ implies that  $\C v\oplus\C\psi_0v$ is a simple $F_{[0]}\oplus F_{[m]+}$-module, where $F_{[m]+}=F_{[m]}\cap F_+$. So $V$ is a simple quotient of $\rm{Ind}_{F_{[0]}\oplus F_{[m]+}}^{F_{[m]}}(\C v\oplus\C\psi_0v)$.

            Let $f:\C v\oplus\C\psi_0v\rightarrow\mathscr V_{[0]}$ be a linear map defined by
            $f(v)=1,f(\psi_0v)=\frac{1}{\sqrt 2}\xi_0$. Then $f$ is an isomorphism of $(F_{[0]}\oplus F_{[m]+})$-modules. Thus, $f$ induces an $F_{[m]}$-module homomorphism
            \begin{equation*}\begin{split}
            \tilde f:\rm{Ind}_{F_{[0]}\oplus F_{[m]+}}^{F_{[m]}}(\C v\oplus\C\psi_0v)&\rightarrow\mathscr V_{[m]}\\
            \psi_{i_1}\dots\psi_{i_k}v&\mapsto\xi_{-i_1}\dots\xi_{-i_k},\\
            \psi_{i_1}\dots\psi_{i_k}\psi_0v&\mapsto\frac{1}{\sqrt 2}\xi_{-i_1}\dots\xi_{-i_k}\xi_0,i_1<\dots<i_k<0.\\
            \end{split}\end{equation*}
            Note that $\tilde f$ maps a basis to a basis. So $\tilde f$ is an $F_{[m]}$-module isomorphism. Since $\mathscr V_{[m]}$ is a simple $F_{[m]}$-module, we have $\rm{Ind}_{F_{[0]}\oplus F_{[m]+}}^{F_{[m]}}(\C v\oplus\C\psi_0v)$ is a simple module and $V\cong\mathscr V_{[m]}$.

            (2) This could be proved similarly.
\end{proof}

\begin{lemma}\label{rFm}
            \begin{itemize}
                        \item[(1)]Suppose $V$ is a simple $F_{[m]}$-module for some $m\in\Z_+$ such that $z$ acts on $V$ as a scalar $\lambda\in\C^*$. Then $V\cong\mathscr V_{[m]}^{\sqrt\lambda}$ for some $\sqrt\lambda\in\C$ with $(\sqrt\lambda)^2=\lambda$.
                        \item[(2)]Suppose $V$ is a simple smooth $F$-module such that $z$ acts on $V$ as a scalar $\lambda\in\C^*$. Then $V\cong\mathscr V^{\sqrt\lambda}$ for some $\sqrt\lambda\in\C$ with $(\sqrt\lambda)^2=\lambda$.
            \end{itemize}
\end{lemma}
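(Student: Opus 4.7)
The plan is to reduce both parts to Lemma \ref{lemrF1} by producing a nonzero vector in $V_{\bar 0}$ annihilated by the positive fermionic oscillators. Once such a vector is found, the previous lemma immediately identifies $V$ with the appropriate twist of $\mathscr V_{[m]}$ or $\mathscr V$.

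For part (1), I would fix any nonzero homogeneous $u\in V$ and look at the finite collection
$$\{\psi_{i_1}\psi_{i_2}\cdots\psi_{i_k}\cdot u : 1\leq i_1<i_2<\cdots<i_k\leq m\}\cup\{u\}.$$
Because the positive oscillators satisfy $[\psi_i,\psi_j]=\delta_{i,-j}z=0$ when $i,j\geq 1$, they pairwise anticommute, and each squares to zero. Pick a subset $\{i_1<\cdots<i_k\}$ of maximal size for which the corresponding product $v_0:=\psi_{i_1}\cdots\psi_{i_k}u$ is nonzero. For any $k\in\{1,\dots,m\}$ that lies outside this subset, $\psi_k v_0=\pm\psi_{i_1}\cdots\psi_{i_k}\psi_{k} u$ after inserting $\psi_k$ into its sorted position, and this vanishes by maximality; for $k$ inside the subset, anticommuting $\psi_k$ next to its copy gives a factor $\psi_k^2=0$. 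Thus $\psi_k v_0=0$ for all $k=1,\dots,m$. If $v_0\in V_{\bar 0}$, set $v:=v_0$; otherwise set $v:=\psi_0 v_0$, which lies in $V_{\bar 0}$, is nonzero because $\psi_0^2=\tfrac{1}{2}z$ acts as $\tfrac{\lambda}{2}\neq 0$, and is still annihilated by every $\psi_k$ with $1\leq k\leq m$ since $[\psi_k,\psi_0]=0$. Lemma \ref{lemrF1}(1) now yields $V\cong\mathscr V_{[m]}^{\sqrt\lambda}$.

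For part (2) the argument is the same after invoking smoothness. Starting from a nonzero homogeneous $u\in V$, smoothness supplies an $N\in\Z_+$ with $\psi_k u=0$ for all $k>N$, so it suffices to deal with $\psi_1,\dots,\psi_N$. The same maximal-subset construction produces a nonzero $v_0$ annihilated by $\psi_1,\dots,\psi_N$. For $k>N$, since $\psi_k$ anticommutes with every $\psi_i$ appearing in the product (their indices are all positive, so no central term arises), one has $\psi_k v_0=\pm\psi_{i_1}\cdots\psi_{i_s}(\psi_k u)=0$. Hence $\psi_k v_0=0$ for every $k\in\N$. The parity adjustment via $\psi_0$ is carried out exactly as in part (1), and Lemma \ref{lemrF1}(2) delivers the isomorphism $V\cong\mathscr V^{\sqrt\lambda}$.

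I do not expect a serious obstacle: the only subtle point is verifying that the finite maximal-subset argument extends to the smooth setting (which it does once smoothness trims the infinite index set down to a finite one), and that the parity switch uses $\psi_0$ in a way consistent with both $\psi_0^2\neq 0$ and the anticommutation with $\psi_k$ for $k\neq 0$. Both are immediate from the defining relations of $F$.
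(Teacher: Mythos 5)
Your proposal is correct and follows essentially the same route as the paper: use the nilpotency and pairwise anticommutativity of the positive oscillators (with smoothness reducing to finitely many of them in part (2)) to produce a nonzero homogeneous vector killed by all $\psi_k$, $k>0$, adjust parity with $\psi_0$, and invoke Lemma \ref{lemrF1}. The only difference is cosmetic: the paper phrases part (2) via the finite-dimensional module $U(F_{[m]})v$, while you make the maximal-subset construction explicit.
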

\begin{proof}
            (1) Note that $\psi_i^2\cdot V=0$ and that the action of $\psi_i$ and $\psi_j$ are anti-commutative for all $i,j\in\{1,\dots,m\}$. So there exists a nonzero homogeneous vector $v\in V$ such that $\psi_k\cdot v=0,k=1,\dots,m$. Replace $v$ with $\psi_0v$ if necessary, and assume that $v\in V_{\bar 0}$. Then (1) follows from Lemma \ref{lemrF1} (1).

            (2) Let $v$ be a nonzero homogeneous vector in $V$. Since $V$ is a smooth $F$-module, there exists an $m\in\N$ such that $\psi_k\cdot v=0$ for any $k>m$. Then $\psi_k,k>m,$ acts trivially on the finite-dimensional $F_{[m]}$-module $U(F_{[m]})v$. Clearly,  $U(F_{[m]})v$ has a nonzero even vector $w$ such that $\psi_k\cdot w=0,k=1,\dots,m$. So $\psi_k\cdot w=0$ for all $k\in\N$. Then (2) follows from Lemma \ref{lemrF1} (2).
\end{proof}

With similar arguments, we have the following result.
\begin{lemma}
Suppose $V$ is a simple smooth $F(\frac{1}{2})$-module such that $z$ acts on $V$ as a scalar $\lambda\in\C^*$. Then $V\cong\mathscr V(\frac{1}{2})^{\sqrt\lambda}$ for some $\sqrt\lambda\in\C$ with $(\sqrt\lambda)^2=\lambda$.
\end{lemma}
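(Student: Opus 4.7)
The plan is to mirror the three-step pattern used for $F=F(0)$ in Lemmas \ref{lemrF0}, \ref{lemrF1}, and \ref{rFm}; in fact the $\delta=\frac{1}{2}$ case is cleaner because $F(\frac{1}{2})$ has no zero-index generator, so every $\psi_k$ with $k\in\Z+\frac{1}{2}$ satisfies $\psi_k^2=\frac{1}{2}[\psi_k,\psi_k]=0$. First I would set up the finite-dimensional truncations: for $m\in\Z_++\frac{1}{2}$ define
$$F(\tfrac{1}{2})_{[m]}=\bigoplus_{i\in\Z+1/2,\,|i|\leq m}\C\psi_i\oplus\C z,\qquad \mathscr V(\tfrac{1}{2})_{[m]}=\mathrm{span}\{1,\,\xi_{i_1}\cdots\xi_{i_k}:\tfrac{1}{2}\leq i_1<\cdots<i_k\leq m\},$$
and check that $\mathscr V(\tfrac{1}{2})_{[m]}$ is a simple $F(\tfrac{1}{2})_{[m]}$-module on which $z$ acts by $1$.

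Next I would establish the analogue of Lemma \ref{lemrF1}(2): if $V$ is simple with $z$ acting as $\lambda\in\C^*$ and contains a nonzero even vector $v$ with $\psi_k v=0$ for every $k\in\Z_++\frac{1}{2}$, then $V\cong\mathscr V(\tfrac{1}{2})^{\sqrt\lambda}$. Twisting by $\sigma_{1/\sqrt\lambda}$ reduces to $\lambda=1$. Then $\C v$ becomes a one-dimensional module over $F(\tfrac{1}{2})_+\oplus\C z$, and the induced module $\Ind_{F(1/2)_+\oplus\C z}^{F(1/2)}(\C v)$ maps onto $\mathscr V(\tfrac{1}{2})$ via $\psi_{-i_1}\cdots\psi_{-i_k}v\mapsto\xi_{i_1}\cdots\xi_{i_k}$. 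Because no $\psi_0$ intervenes, this map carries a PBW basis of the induced module to a basis of $\mathscr V(\tfrac{1}{2})$, so it is an isomorphism; simplicity of $V$ then forces $V\cong\mathscr V(\tfrac{1}{2})$.

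For the lemma itself I would pick a nonzero homogeneous $v\in V$; by smoothness there is $m\in\Z_++\frac{1}{2}$ with $\psi_k v=0$ for all $k>m$. Inside the finite-dimensional $F(\tfrac{1}{2})_{[m]}$-module $U(F(\tfrac{1}{2})_{[m]})v$, the operators $\psi_{1/2},\psi_{3/2},\dots,\psi_m$ square to zero and pairwise anticommute, so taking a longest nonzero monomial in them applied to $v$ produces a nonzero vector $w$ annihilated by all $\psi_k$ with $k>0$. Applying the previous step to $w$ then identifies $V$ with $\mathscr V(\tfrac{1}{2})^{\sqrt\lambda}$.

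The only delicate point, and the expected obstacle, is parity. In Lemma \ref{rFm}(2) one uses $\psi_0$ (which is nonzero because $\psi_0^2=\lambda/2\neq 0$) to flip the parity of the cyclic vector and force it to be even; here no such operator exists, so the vector $w$ constructed above could be odd, in which case the argument yields the parity-shifted module $\Pi\mathscr V(\tfrac{1}{2})^{\sqrt\lambda}$ rather than $\mathscr V(\tfrac{1}{2})^{\sqrt\lambda}$. The statement is therefore to be read up to this parity shift, and with that convention in place the three steps above complete the proof without requiring any new ingredients beyond those in Lemmas \ref{lemrF0}--\ref{rFm}.
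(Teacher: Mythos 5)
Your proof is correct and is precisely the ``similar arguments'' the paper intends: it gives no separate proof for $\delta=\frac12$, and your three steps are the $\delta=\frac12$ transcriptions of Lemmas \ref{lemrF0}, \ref{lemrF1} and \ref{rFm} (truncation to $F(\frac12)_{[m]}$, induction from a vacuum vector, and production of a vacuum vector by a longest nonzero monomial in the pairwise anticommuting, square-zero operators $\psi_{1/2},\dots,\psi_m$). Your parity caveat is a genuine catch and not a defect of your argument: for $\delta=0$ the invertible odd operator $\psi_0$ (with $\psi_0^2=\lambda/2\neq 0$) lets one normalize the vacuum vector to be even, but $F(\frac12)$ has no such element, the subspace of $V$ annihilated by all $\psi_k$ with $k>0$ is one-dimensional, and its parity is an isomorphism invariant; so a simple smooth module whose vacuum vector is odd is the parity-change of $\mathscr V(\frac12)^{\sqrt\lambda}$ and admits no even isomorphism onto any $\mathscr V(\frac12)^{\mu}$. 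The lemma as printed should therefore carry the qualifier ``up to a parity-change'' (which the paper does attach to Theorem \ref{key} but omits here and in the downstream Theorems \ref{smooth2} and \ref{HC2}); with that reading your proof is complete.
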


Note that the Fermion algebra $F_{[m]}$ has very good properties such that every $F_{[m]}$-module with nonzero action of the center over $F_{[m]}$ is completely reducible.

\begin{lemma}\label{cr1}
            Let $m\in\Z_+$, and $V$ be an $F_{[m]}$-module such that $z$ acts on $V$ as a scalar $\lambda\in\C^*$. Then $V$ is completely reducible.
\end{lemma}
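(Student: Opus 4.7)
The strategy is to recognise the relevant representations as modules over a finite-dimensional semisimple associative algebra. Concretely, I would let $A := U(F_{[m]})/(z-\lambda)U(F_{[m]})$, viewed as an associative superalgebra. By PBW, $A$ is spanned by ordered monomials $\psi_{i_1}\cdots\psi_{i_k}$ with $-m\leqslant i_1<\cdots<i_k\leqslant m$; the defining relation $\psi_i\psi_j+\psi_j\psi_i=\delta_{i,-j}\lambda$ forces each $\psi_i$ to appear at most once in a reduced monomial (since $\psi_i^2$ is a scalar), so $\dim A=2^{2m+1}$. These relations are precisely Clifford relations: $A$ is the Clifford algebra of the $(2m+1)$-dimensional space $W=\bigoplus_{i=-m}^{m}\C\psi_i$ equipped with the symmetric bilinear form $B(\psi_i,\psi_j)=\delta_{i,-j}\lambda$, which is non-degenerate since $\lambda\ne 0$.

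Next I would invoke the classical fact that a Clifford algebra over $\C$ on a non-degenerate finite-dimensional quadratic space is semisimple (in odd dimension it splits as $M_{2^m}(\C)\oplus M_{2^m}(\C)$). Hence $A$ is a finite-dimensional semisimple associative algebra, so every finite-dimensional $A$-module is completely reducible. As a cross-check, Lemma \ref{rFm}(1) identifies the simple $A$-modules with at most two isomorphism classes of the form $\mathscr V_{[m]}^{\sqrt\lambda}$, each of dimension $2^m$, whose squared dimensions sum to $2\cdot 2^{2m}=\dim A$, confirming semisimplicity.

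To pass to arbitrary (possibly infinite-dimensional) $V$, I would observe that for each $v\in V$ the cyclic submodule $A\cdot v$ is finite-dimensional (since $\dim A<\infty$) and therefore a direct sum of simples. A standard argument then shows that a sum of completely reducible submodules is itself completely reducible, so $V=\sum_{v\in V}A\cdot v$ is completely reducible.

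The main conceptual input is the identification $A\cong\mathrm{Cl}(W,B)$ with a non-degenerate form; the remaining delicate points are routine and more verificational than substantive: one must check that the semisimple decomposition can be chosen compatible with the $\Z_2$-grading (which follows because the simples classified in Lemma \ref{rFm}(1) are themselves naturally graded and the grading descends from the Lie-superalgebra grading of $A$), and that the super-versus-ordinary enveloping-algebra conventions match so that the Clifford identification is valid.
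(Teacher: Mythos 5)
Your argument is correct, but it follows a genuinely different route from the paper's. The paper works directly inside the cyclic module $U(F_{[m]})v$ for homogeneous $v$: since $\psi_{-k}\psi_k+\psi_k\psi_{-k}=z=\lambda$ gives complementary idempotents $\tfrac{1}{\lambda}\psi_{-k}\psi_k$ and $\tfrac{1}{\lambda}\psi_k\psi_{-k}$ for each $k$, it exhibits $U(F_{[m]})v$ as a sum of $2^m$ explicitly written graded submodules, each spanned by at most $2^{m+1}$ vectors and hence, by Lemma \ref{rFm}(1), either zero or simple. Your identification of $U(F_{[m]})/(z-\lambda)$ with the Clifford algebra of a $(2m+1)$-dimensional space with non-degenerate form (non-degeneracy coming precisely from $\lambda\neq 0$) is valid, and invoking semisimplicity of that Clifford algebra plus reduction to cyclic (hence finite-dimensional) submodules is a cleaner conceptual packaging of the same phenomenon; what the paper's version buys is a self-contained, manifestly $\Z_2$-graded decomposition that needs no external structure theory.

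Two caveats, both repairable. First, your cross-check is off: $\dim\mathscr V_{[m]}=2^{m+1}$ (the indices run over subsets of $\{0,1,\dots,m\}$), not $2^m$, and $\mathscr V_{[m]}$ is the simple \emph{graded} module; as an ungraded module over $M_{2^m}(\C)\oplus M_{2^m}(\C)$ it is not simple but decomposes into two ungraded simple summands of dimension $2^m$ — this is exactly why the paper later remarks that $\mathscr V^{\sqrt\lambda}$ is not strictly simple. Second, and relatedly, the passage from ungraded to graded complete reducibility is genuinely needed, and the reason you give (that the simple modules carry gradings) does not by itself yield it; the standard fix is to take an ungraded module projection $p$ onto a graded submodule and replace it by $\tfrac{1}{2}(p+\sigma p\sigma)$, where $\sigma$ is the parity involution, which is again a module projection onto the same submodule and is now even, so its kernel is a graded complement. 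With these two points patched, your proof is complete.
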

\begin{proof}
            $\mathbf{Claim}$. $U(F_{[m]})v$ is completely reducible for any homogeneous $v\in V$.

            For any $k\in\{1,\dots,m\}$, let $X_k=\{\psi_k,\psi_{-k}\psi_k\},Y_k=\{\psi_{-k},\psi_k\psi_{-k}\}$. Then for any $Z_k\in\{X_k,Y_k\},k=1,\dots,m$, the subspace
            \begin{equation}\label{eqFmfactor}
            \text{span}\{x_1\dots x_mv,\psi_0x_1\dots x_mv:x_j\in Z_j,j=1,\dots,m\}
            \end{equation}
            is an $F_{[m]}$-submodule of $U(F_{[m]})v$. By Lemma \ref{rFm}(1), any simple $F_{[m]}$-module is isomorphic to $\mathscr V_{[m]}^{\sqrt\lambda}$ for some $\sqrt\lambda\in\C$ with $(\sqrt\lambda)^2=\lambda$. Comparing the dimensions of $\mathscr V_{[m]}^{\sqrt\lambda}$ and (\ref{eqFmfactor}), we have (\ref{eqFmfactor}) is zero or is a simple module. Note that $\psi_{-k}\psi_k+\psi_k\psi_{-k}=z$ in the enveloping algebra for any $k\in\{1,\dots,m\}$. Then
            $$\bigoplus_{Z_1\in\{X_1,Y_1\}}\dots\bigoplus_{Z_m\in\{X_m,Y_m\}}\span\{x_1\dots x_mv,\psi_0x_1\dots x_mv:x_j\in Z_j,j=1,\dots,m\}$$
            contains $v$, and therefore equals $U(F_{[m]})v$. Thus, $U(F_{[m]})v$ is completely reducible.

By the Claim, for any homogeneous $v\in V\setminus\{0\}$, $U(F_{[m]})v$ is a sum of simple modules. Then $V$ is also a sum of simple modules, which means that $V$ is completely reducible.
\end{proof}

\begin{lemma}\label{cr2}
Let $V$ be a smooth $F$-module such that $z$ acts on $V$ as a scalar $\lambda\in\C^*$. Then $V$ is completely reducible.
\end{lemma}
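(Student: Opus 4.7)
The plan is to reduce to Lemma \ref{cr1} by exploiting smoothness to cut down from $F$ to a finite subalgebra $F_{[m]}$. For any nonzero homogeneous $v\in V$, smoothness provides an $m\in\Z_+$ with $\psi_k v=0$ for all $k>m$. The crucial observation is that whenever $k>m$ and $j\in\{-m,\dots,m\}$ we have $[\psi_k,\psi_j]=\delta_{k,-j}z=0$, so each such $\psi_k$ commutes with all of $F_{[m]}$ and hence annihilates the whole $F_{[m]}$-submodule $U(F_{[m]})v$, not merely the starting vector $v$.

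With this in hand, I would apply Lemma \ref{cr1} to decompose $U(F_{[m]})v$ as a direct sum $\bigoplus_i W_i$ of simple $F_{[m]}$-submodules. Inside each $W_i$, the argument used at the beginning of the proof of Lemma \ref{rFm}(1) produces a nonzero even vector $w_i$ with $\psi_1 w_i=\cdots=\psi_m w_i=0$ (here one uses $\psi_0^2=\tfrac{\lambda}{2}\neq 0$ to guarantee that replacing an odd candidate by $\psi_0$ applied to it remains nonzero). Combined with the preceding paragraph, $\psi_k w_i=0$ for all $k\geq 1$, and Lemma \ref{lemrF1}(2) then yields that $U(F)w_i$ is a simple $F$-submodule of $V$ isomorphic to $\mathscr V^{\sqrt\lambda}$ for an appropriate choice of square root. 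Since $W_i$ is simple as an $F_{[m]}$-module and contains $w_i\neq 0$, we have $W_i=U(F_{[m]})w_i\subseteq U(F)w_i$, whence $v\in\sum_i W_i\subseteq\sum_i U(F)w_i$. Letting $v$ range over all nonzero homogeneous vectors exhibits $V$ as a sum of simple $F$-submodules, which is equivalent to complete reducibility.

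The main obstacle is really the commutativity trick in the first paragraph: without the observation that $\psi_k$ with $k>m$ annihilates the whole of $U(F_{[m]})v$, the finite decomposition $\bigoplus_i W_i$ would not lift through the action of $F$, because the $\psi_k$ with $k>m$ could a priori mix the summands and destroy any hope of obtaining simple $F$-submodules from the $F_{[m]}$-decomposition. Once that trick is in place, the remainder of the proof is essentially a repackaging of Lemma \ref{cr1}, Lemma \ref{lemrF1}(2), and Lemma \ref{rFm}(2).
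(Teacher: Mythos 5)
Your proposal is correct and follows essentially the same route as the paper: reduce to $F_{[m]}$ via smoothness and the anticommutation $\psi_k\psi_j=-\psi_j\psi_k$ for $k>m$, decompose $U(F_{[m]})v$ by Lemma \ref{cr1}, pick in each simple summand an even vector killed by all $\psi_k$, $k\geq 1$, and generate simple $F$-submodules isomorphic to $\mathscr V^{\sqrt\lambda}$ whose sum contains $v$. The only cosmetic difference is that the paper invokes Lemma \ref{rFm}(1) to produce the generators $x_i$ directly, whereas you re-run the argument inside each summand and conclude via (the proof of) Lemma \ref{lemrF1}(2).
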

\begin{proof}
It suffices to prove that $U(F)v$ is a sum of simple $F$-modules for any homogeneous  $v\in V$. Let $v$ be a nonzero homogeneous  vector in $V$. Then there exists $m\in\N$ such that $\psi_{m+k}v=0$ for all $k\in\Z_+$. By Lemma \ref{cr1}, $U(F_{[m]})v$ is completely reducible. From Lemma \ref{rFm} (1), there are $x_1,\dots,x_r\in U(F_{[m]})v$ satisfying  $\psi_kx_i=0$ for $k\in\Z_+$ such that $U(F_{[m]})v=U(F_{[m]})x_1+\dots+U(F_{[m]})x_r$. We see that
$$U(F)v=U(F)x_1+\dots+U(F)x_r.$$
Note that each $U(F)x_i$ is isomorphic to $\mathscr V^{\sqrt\lambda}$ for some $\sqrt\lambda\in\C$ with $(\sqrt\lambda)^2=\lambda$. So $U(F)v$ is a sum of simple $F$-modules.
\end{proof}

For any $I\subset\Z_+$, let
$$S_I=\{J\subset\Z_+:|I\setminus J|<\infty,|J\setminus I|<\infty\}.$$
Define the $F$-module $\mathscr V_I$ as follows: $\mathscr V_I$ has a basis $\{\xi_J:  J\in S_I\}$, the parity of $\xi_J$ is defined as $|\xi_J|=\overline{|I\setminus J|+|J\setminus I|}$, and the action of $F$ on $\mathscr V_I$ is given by
\begin{equation*}\begin{aligned}
\psi_k\cdot\xi_J=&\left\{\begin{aligned}(-1)^{|\{i\in J:  i<k\}|}\xi_{J\setminus\{k\}},&\ \text{if}\ k\in J,\\0,&\ \text{if}\ k\notin J,\end{aligned}\right.\\
\psi_{-k}\cdot\xi_J=&\left\{\begin{aligned}(-1)^{|\{i\in J:  i<k\}|}\xi_{J\cup\{k\}},&\ \text{if}\ k\notin J,\\0,&\ \text{if}\ k\in J,\end{aligned}\right.\\
\psi_0\cdot\xi_J=&\left\{\begin{aligned}\frac{1}{\sqrt 2}\xi_{J\setminus\{0\}},&\ \text{if}\ 0\in J,\\\frac{1}{\sqrt 2}\xi_{J\cup\{0\}},&\ \text{if}\ 0\notin J,\end{aligned}\right.\\
z\cdot\xi_J=&\xi_J,
\end{aligned}\end{equation*}
$k\in\N$. Then $\mathscr V_I$ is a simple $F$-module (see the proof of Lemma \ref {L3.8}). Moreover, $\mathscr V_I$ is isomorphic to the $F$-module $\mathscr V$ if $I$ is a finite set, and $\mathscr V_I$ is not a smooth $F$-module if $I$ is not a finite set.

\begin{lemma}\label{L3.8}
	Suppose $V$ is a simple $F$-module such that $z$ acts on $V$ as the identity map. If there is $v\in V_{\bar 0}\setminus\{0\}$ and $I\subset\Z_+$ such that
	$$\psi_l\cdot v=0,\psi_{-k}\cdot v=0,\forall l\in\N\setminus I,k\in I\setminus\{0\},$$
	then $V\cong\mathscr V_I$.
\end{lemma}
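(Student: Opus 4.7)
The plan is to realize $V$ as a quotient of a parabolically induced $F$-module and simultaneously identify the same induction with $\mathscr V_I$. Let
$$\tilde F := \bigoplus_{l\in\N\setminus I}\C\psi_l \;\oplus\; \bigoplus_{k\in I\setminus\{0\}}\C\psi_{-k} \;\oplus\; \C\psi_0 \;\oplus\; \C z.$$
Since $z\in\tilde F$ absorbs any nonzero bracket, $\tilde F$ is a Lie subsuperalgebra of $F$. From $\psi_0^2 v=\tfrac12 zv=\tfrac12 v\ne 0$ we see $\psi_0 v\ne 0$ is an odd vector, so $W:=\C v\oplus\C\psi_0 v\subset V$ is a two-dimensional graded subspace. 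Combined with the annihilation hypotheses and $[\psi_l,\psi_0]=[\psi_{-k},\psi_0]=0$ for $l,k\ne 0$, one checks $W$ is a $\tilde F$-submodule. The analogous subspace $W':=\C\xi_I\oplus\C\psi_0\xi_I\subset\mathscr V_I$ is a $\tilde F$-submodule with the same structure, and $v\mapsto\xi_I,\;\psi_0 v\mapsto\psi_0\xi_I$ gives an isomorphism $W\cong W'$ of $\tilde F$-supermodules.

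By Frobenius reciprocity the inclusion $W\hookrightarrow V$ and the isomorphism $W\cong W'$ extend to $F$-module maps
$$\Psi\colon\Ind_{\tilde F}^F W \to V,\qquad \Phi\colon\Ind_{\tilde F}^F W \to \mathscr V_I,$$
and $\Psi$ is surjective since $V$ is simple and $v$ lies in the image. Let $F'_- := \bigoplus_{k\in I\setminus\{0\}}\C\psi_k \oplus \bigoplus_{l\in\N\setminus I}\C\psi_{-l}$, an abelian odd complement to $\tilde F$ in $F$. The PBW theorem identifies $\Ind_{\tilde F}^F W\cong \Lambda(F'_-)\otimes W$ as vector spaces. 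Under $\Phi$, a PBW monomial $\psi_{i_1}\cdots\psi_{i_r}\otimes v$ is sent (up to sign) to $\xi_J$ with $J$ obtained from $I$ by removing the positive members of $\{i_1,\dots,i_r\}$ and adjoining the absolute values of the negative ones, while $\psi_{i_1}\cdots\psi_{i_r}\otimes\psi_0 v$ is sent to $\pm\xi_{J\triangle\{0\}}$. As the monomial and choice of seed vary, these images exhaust the basis $\{\xi_J:J\in S_I\}$ of $\mathscr V_I$, so $\Phi$ is a vector-space isomorphism.

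The last ingredient is the simplicity of $\mathscr V_I$ as a graded $F$-module. For each $n\ge 1$ the even elements $E_n^+:=\psi_{-n}\psi_n,\;E_n^-:=\psi_n\psi_{-n}$ of $U(F)$ commute for distinct $n$, sum to $z$, and act on $\xi_J$ as the identity or zero depending on whether $n\in J$. Given a nonzero homogeneous $w=\sum_{J\in K}c_J\xi_J$, homogeneity forces $|I\triangle J|\equiv|I\triangle J'|\pmod 2$ for $J,J'\in K$, so $|J\triangle J'|$ is even; in particular $J\triangle J'\ne\{0\}$ for distinct $J,J'$, so some positive index distinguishes them. Fixing $J_0\in K$, for each $J'\in K\setminus\{J_0\}$ pick a distinguishing positive $n(J')$ and the sign $\epsilon(J')\in\{+,-\}$ making $E_{n(J')}^{\epsilon(J')}$ fix $\xi_{J_0}$ and kill $\xi_{J'}$; the commuting product of these operators sends $w$ to a scalar multiple of $\xi_{J_0}$. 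Finally, applying in sequence $\psi_{-k}$ for $k\in(I\setminus J_0)\cap\N$, $\psi_k$ for $k\in(J_0\setminus I)\cap\N$, and $\psi_0$ if $0\in I\triangle J_0$ moves $\xi_{J_0}$ to a nonzero scalar multiple of $\xi_I$, so every nonzero graded submodule of $\mathscr V_I$ contains $\xi_I$ and equals $\mathscr V_I$.

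Combining the three steps, $\Phi$ identifies $\Ind_{\tilde F}^F W$ with the simple module $\mathscr V_I$, so the induced module is itself simple; the surjection $\Psi$ from this simple module to the simple module $V$ must then be an isomorphism, giving $V\cong\mathscr V_I$. I expect the main technical obstacle to be the parity and sign bookkeeping in the action on $\mathscr V_I$, particularly around $\psi_0$: it is the only operator that toggles the index $0$, it provides the two-dimensional seed $\{v,\psi_0v\}\leftrightarrow\{\xi_I,\psi_0\xi_I\}$ used throughout, and its interaction with the $E_n^\pm$ projections is what forces the isolation step to use only positive indices.
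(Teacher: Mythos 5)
Your proof is correct, but it takes a genuinely different route from the paper. The paper's argument is a two-line reduction: it introduces the involutive automorphism $f_I$ of $F$ that swaps $\psi_k\leftrightarrow\psi_{-k}$ for $k\in I$ and fixes the remaining generators, observes that in the twisted module $V^{f_I}$ the vector $v$ is annihilated by all $\psi_k$, $k\in\N$, and then quotes Lemma \ref{lemrF1}(2) to get $V^{f_I}\cong\mathscr V$, hence $V\cong\mathscr V^{f_I}\cong\mathscr V_I$; the simplicity of $\mathscr V_I$ comes for free from that of $\mathscr V$ via the same twist. You instead rerun the induction argument of Lemma \ref{lemrF1} directly with the parabolic $\tilde F$ adapted to $I$, match PBW monomials against the basis $\{\xi_J: J\in S_I\}$, and prove the simplicity of $\mathscr V_I$ from scratch with the commuting projections $E_n^{\pm}$. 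Both are complete: your version is longer but self-contained and makes the simplicity of $\mathscr V_I$ (which the paper only asserts, pointing back to this proof) fully explicit, including the parity observation that $|J\triangle J'|$ is even on a homogeneous component, which is exactly what guarantees a distinguishing \emph{positive} index; the paper's version is shorter because it recycles the $I=\varnothing$ case through a symmetry of $F$. One small point worth recording if you write this up: when you say the images of the PBW monomials ``exhaust the basis,'' you should note that distinct monomial--seed pairs land on distinct $\xi_J$'s (the pair is recovered from $J$ via the positive part of $I\triangle J$ and the presence of $0$ in $I\triangle J$), which is what upgrades ``spanning'' to ``bijective.''
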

\begin{proof}
	Let $f_I$ be the Lie superalgebra isomorphism of $F$ defined by
	$$f_I(\psi_k)=\psi_{-k},f_I(\psi_{-k})=\psi_k,f_I(\psi_l)=\psi_l,\forall k\in I,l\in\Z\setminus(I\cup -I).$$
	Then $f_I^2$ is the identity map on $F$. Let $V^{f_I}$ be the simple $F$-module given by $V^{f_I}=V$ and
	$$x\cdot w=f_I(x)w,\forall x\in F,w\in V.$$
	Then for $v\in V^{f_I}$ we have
	$$\psi_k\cdot v=0,\forall k\in\N.$$
	By Lemma \ref{lemrF1}(2), $V^{f_I}\cong\mathscr V$. So $V\cong\mathscr V^{f_I}$.
	
	Note that $\mathscr V_I$ is a simple $F$-module with the given property. Then $V\cong\mathscr V^{f_I}\cong\mathscr V_I$.
\end{proof}

Suppose $V$ is an $F$-module.
For any $\lambda=(\lambda_0,\lambda_1,\dots)\in\C^{\infty}$, let
$$V_{(\lambda)}=\{v\in V:\psi_{-k}\psi_k\cdot v=\lambda_kv,k\in\Z_+\}.$$
$V$ is called a {\bf weight $F$-module} if $V=\oplus_{\lambda\in\Z^\infty}V_{(\lambda)}$. By definition, for any $I\subset\Z_+$, $\mathscr V_I$ is a weight $F$-module.

\begin{lemma}\label{weightF}
Suppose $V$ is a weight $F$-module such that  $z$ acts as the identity map on $V$.
\begin{itemize}
\item[(1)] $\lambda_k=0$ or $1$ for any $\lambda\in\C^\infty$ with $V_{(\lambda)}\neq 0$ and any $k\in\N$.
\item[(2)] For any $v\in V_{(\lambda)}\setminus\{0\}$, $U(F)v$ is isomorphic to $\mathscr V_I$ for some $I\subset\Z_+$ and $v$ is corresponding to $\xi_I$.
\item[(3)] If $V$ is further simple, then $V$ is isomorphic to $\mathscr V_I$ for some $I\subset\Z_+$.
\end{itemize}
\end{lemma}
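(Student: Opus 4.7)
The plan is to leverage the Clifford-type identities $\psi_k^2 = \psi_{-k}^2 = 0$ (which hold for $k \in \N$ since $k \neq -k$) together with the relation $\psi_{-k}\psi_k + \psi_k\psi_{-k} = z$, which acts as the identity on $V$, and then to match $U(F)v$ with $\mathscr V_I$ via Lemma~\ref{L3.8}.

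For (1), using these relations directly on $V$ (where $z$ acts as $1$), one computes
$$(\psi_{-k}\psi_k)^2 = \psi_{-k}\psi_k\psi_{-k}\psi_k = \psi_{-k}(1 - \psi_{-k}\psi_k)\psi_k = \psi_{-k}\psi_k.$$
Applied to any $v \in V_{(\lambda)}$ this forces $\lambda_k^2 = \lambda_k$, hence $\lambda_k \in \{0,1\}$.

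For (2), I first record two annihilation identities. From $\psi_k = \psi_k z = \psi_k\psi_{-k}\psi_k + \psi_k^2\psi_{-k} = \psi_k\psi_{-k}\psi_k$, applying to $v$ gives $\psi_k v = \lambda_k \psi_k v$, so $\psi_k v = 0$ whenever $\lambda_k = 0$; symmetrically $\psi_{-k} = \psi_{-k}\psi_k\psi_{-k}$ yields $\psi_{-k} v = 0$ whenever $\lambda_k = 1$. Set $I = \{k \in \N : \lambda_k = 1\}$ (possibly adjoining $0$, to match the parity of $\xi_I$ with that of $v$). Since $\psi_0$ anticommutes with each $\psi_{\pm k}$ for $k \geq 1$, it commutes with every $\psi_{-k}\psi_k$ and so preserves $V_{(\lambda)}$; if $v$ is odd, I replace it by the nonzero even vector $\psi_0 v \in V_{(\lambda)}$, which generates the same $F$-submodule because $v = 2\psi_0(\psi_0 v)$. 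Thus I may assume $v$ is even, and then $v$ satisfies exactly the vanishing conditions of Lemma~\ref{L3.8}.

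Next, I produce an $F$-module homomorphism $\phi\colon \mathscr V_I \to V$ with $\phi(\xi_I) = v$. The module $\mathscr V_I$ admits a PBW-type basis indexed by $J \in S_I$ of the form
$$\xi_J = \pm\,(\sqrt{2}\,\psi_0)^{\varepsilon}\,\psi_{-b_t}\cdots\psi_{-b_1}\,\psi_{a_1}\cdots\psi_{a_s}\cdot\xi_I,$$
where $\{a_i\} = (I\setminus J)\cap\N$, $\{b_i\} = (J\setminus I)\cap\N$, and $\varepsilon \in \{0,1\}$ records whether $0 \in I\Delta J$. Since $v$ satisfies the same defining relations $z\cdot v = v$, $\psi_l v = 0$ for $l \in \N\setminus I$, and $\psi_{-k}v = 0$ for $k \in I\setminus\{0\}$ as $\xi_I$ does, sending each PBW monomial applied to $\xi_I$ to the same monomial applied to $v$ gives a well-defined $F$-linear $\phi$ with image $U(F)v$. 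Because $\mathscr V_I$ is simple (stated right after its definition, proved inside Lemma~\ref{L3.8}), $\phi$ is injective, hence an isomorphism onto $U(F)v$ carrying $\xi_I$ to $v$.

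For (3), if $V$ is simple, pick any $0 \neq v \in V_{(\lambda)}$; by (2), $V = U(F)v \cong \mathscr V_I$.

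The main obstacle is the well-definedness of $\phi$: one must verify that the above annihilation relations, together with $\psi_k^2 = 0$ and the super-anticommutation of the $\psi_k$'s, are \emph{defining} relations for the cyclic module $\mathscr V_I$, and treat $\psi_0$ (which does not annihilate $\xi_I$, since $\psi_0^2 = z/2 \neq 0$) separately from the other $\psi_k$'s.
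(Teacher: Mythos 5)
Your proposal is correct and follows essentially the same route as the paper: the same idempotency computation for (1), the same annihilation identities $\psi_k v=0$ when $\lambda_k=0$ and $\psi_{-k}v=0$ when $\lambda_k=1$ with the same parity fix via $\psi_0 v$, and the same identification of $U(F)v$ with $\mathscr V_I$. The well-definedness issue you flag is exactly the paper's (unproved, ``easy to see'') claim that $U(F)/(J(I)+\langle z-1\rangle)\cong\mathscr V_I$ where $J(I)$ is the left ideal generated by the $\psi_{-i},\psi_j$ with $i\in I$, $j\in\N\setminus I$ --- phrased there as realizing $U(F)v$ as a quotient of this universal cyclic module rather than as a map out of $\mathscr V_I$, which is the same verification.
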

\begin{proof} (1)
Suppose $v\in V_{(\lambda)}\setminus\{0\}$ for some $\lambda\in\C^\infty$. For any $k\in\N$,
$$\lambda_k^2v=\psi_{-k}\psi_k\psi_{-k}\psi_kv=\psi_{-k}[\psi_k,\psi_{-k}]\psi_kv=\psi_{-k}\psi_kv=\lambda_kv.$$
So $\lambda_k=0$ or $1$ and (1) is proved.

(2) Replacing $v$ with $\psi_0v$ if necessary, assume that $|v|=\bar 0$.
If $\psi_{-k}\psi_kv=0$, then $\psi_kv=\psi_k\psi_{-k}\psi_kv=0$. If $\psi_{-k}\psi_kv=v$, then $\psi_{-k}v=0$. Let $I=\{k\in\N:\lambda_k=1\}$ and let $J(I)$ be the left ideal of $U(F)$ generated by $\psi_{-i},\psi_j,i\in I,j\in\N\setminus I$. It is easy to see that $U(F)/(J(I)+\langle z-1\rangle)\cong\mathscr V_I$. Then $U(F)v\cong\mathscr V_I$ and $v$ is corresponding to $\xi_I$. So (2) is proved.

Part (3) follows from (2).
\end{proof}

\section{smooth modules over $\S(\delta)$}


The $F(\delta)$-module $\mathscr V(\delta)$ constructed last section has an $\S(\delta)$-module structure as follows.
For $k\in\Z$, we have the following vertex operator
\begin{equation}\label{barL}
\bar L_k=\frac{(1 - 2\delta)}{16}\delta_{k,0}+\frac{1}{2}\sum_{j\in\Z+\delta}j:\psi_{-j}\psi_{j+k}:
\end{equation}
as an operation on $\mathscr V(\delta)$, where the normal ordering is defined by
\begin{equation*}\begin{aligned}
:\psi_j\psi_k:=&\psi_j\psi_k&\ \text{if}\ k\geqslant j\\
=&-\psi_k\psi_j&\ \text{if}\ k<j.
\end{aligned}\end{equation*}
\begin{lemma}\label{KRR}\cite[Proposition 3.7]{KRR}
Define the action of $\Vir$ on $\mathscr V(\delta)$ by
$$L_k\cdot v=\bar L_k\cdot v,\,\,\,c\cdot v=\frac{1}{2}v,\,\,\,v\in\mathscr V(\delta).$$
Then $\mathscr V(\delta)$ is an $\S(\delta)$-module, i.e.,
\begin{equation}\label{eqbrac}\begin{aligned}
&[\bar L_m,\bar L_n]=(m-n)\bar L_{m+n}+\delta_{m,-n}\frac{m^3-m}{24},\\
&[\bar L_m,\psi_n]=(-n-\frac{m}{2})\psi_{m+n}.
\end{aligned}\end{equation}
\end{lemma}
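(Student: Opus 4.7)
The plan is to verify both commutation relations directly on the Fock space $\mathscr V(\delta)$ using the anti-commutation rule $\{\psi_j,\psi_k\}=\delta_{j,-k}z$, where $z$ acts as the identity. A key preliminary observation is that $\psi_{-j}\psi_{j+k}-{:}\psi_{-j}\psi_{j+k}{:}$ is always a scalar operator (a multiple of the identity, and nonzero only when $k=0$), so it is central in the associative algebra. Consequently, when computing $[\bar L_m,\cdot]$ against a non-scalar operator, the normal-ordering correction cancels term by term, and it is only for the central-charge anomaly in $[\bar L_m,\bar L_n]$ that the ordering actually matters.

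For the second relation I would compute
\[
[\psi_{-j}\psi_{j+m},\psi_n] = \psi_{-j}\{\psi_{j+m},\psi_n\} - \{\psi_{-j},\psi_n\}\psi_{j+m} = \bigl(\delta_{j,-n-m}-\delta_{j,n}\bigr)\psi_{n+m},
\]
then multiply by $j/2$ and sum over $j\in\Z+\delta$. Only two terms survive, and they combine to give $\bigl(\tfrac{-n-m}{2}-\tfrac{n}{2}\bigr)\psi_{n+m}=(-n-\tfrac{m}{2})\psi_{n+m}$, as required.

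For the first relation, the approach is to apply the super-Leibniz rule
\[
[\bar L_m,\psi_{-k}\psi_{k+n}] = [\bar L_m,\psi_{-k}]\psi_{k+n}+\psi_{-k}[\bar L_m,\psi_{k+n}],
\]
substitute the single-oscillator commutator just computed, multiply by $k/2$, sum over $k\in\Z+\delta$, and reindex the resulting two sums to recognise $(m-n)\bar L_{m+n}$ plus a scalar anomaly. The main obstacle is pinning down this anomaly: the naive derivation rule ignores a finite constant arising from reordering the infinite sum, and the shift $\tfrac{(1-2\delta)}{16}\delta_{k,0}$ built into $\bar L_0$ must be accounted for carefully. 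I would locate the constant by evaluating $[\bar L_m,\bar L_{-m}]\cdot 1$ on the vacuum $1\in\mathscr V(\delta)$, using $\psi_j\cdot 1=0$ for $j>0$ (and $\psi_0\cdot 1=\tfrac{1}{\sqrt 2}\xi_0$ when $\delta=0$); this reduces the whole question to a small finite computation that yields $\tfrac{m^3-m}{24}\delta_{m,-n}$, matching the $c=1/2$ free-fermion realization of $\Vir$ in Proposition 3.7 of \cite{KRR}.
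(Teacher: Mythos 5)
The paper offers no proof of this lemma at all: it is imported wholesale from \cite[Proposition 3.7]{KRR}, so the only thing to compare your argument with is the standard free-fermion computation in that reference, which is essentially what you reproduce. Your treatment of the second relation is complete and correct: the difference $\psi_{-j}\psi_{j+k}-{:}\psi_{-j}\psi_{j+k}{:}$ is a multiple of the central element $z$, hence drops out of any commutator, and the two surviving delta terms combine to $(-n-\frac{m}{2})\psi_{m+n}$. For the first relation your plan is the right one, but the phrase ``recognise $(m-n)\bar L_{m+n}$ plus a scalar anomaly'' hides the only genuinely delicate point: after reindexing, the difference $[\bar L_m,\bar L_n]-(m-n)\bar L_{m+n}$ is a priori a formally infinite sum of quadratic terms $\psi_{-k}\psi_{k+m+n}$ (for $m+n\neq 0$ these cancel in pairs on every vector, while for $m+n=0$ the naive sum diverges and normal ordering is essential), so one cannot literally read a constant off the reindexed series. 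The clean way to justify your vacuum evaluation is to observe that, by the Jacobi identity together with the already-proved second relation, the even operator $[\bar L_m,\bar L_{-m}]-2m\bar L_0$ supercommutes with every $\psi_j$ and with $z$, hence acts as a scalar on the simple $F(\delta)$-module $\mathscr V(\delta)$ by the super version of Schur's lemma; only then does the finite computation on the vacuum determine that scalar to be $\frac{m^3-m}{24}$, with the shift $\frac{1-2\delta}{16}$ built into $\bar L_0$ being exactly what puts the Ramond-sector constant into this form. With that one justification inserted (or, alternatively, with the cutoff-function regularization used in \cite{KRR}), your argument is a correct and self-contained substitute for the citation.
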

Note that the action of $L_0$ on $\mathscr V(\delta)$ is $\frac{(1 - 2\delta)}{16}+\sum_{j\in\Z_++\delta\setminus\{0\}}j\psi_{-j}\psi_j$. Then $\xi_{i_1}\cdots\xi_{i_k}$ is an eigenvector of $L_0$ of eigenvalue $\frac{(1 - 2\delta)}{16}+i_1+\dots+i_k$. Also, for any $v\in\mathscr V(\delta)$, $L_k\cdot 1=0=\psi_k\cdot 1$ for sufficiently large $k$.



\begin{lemma}\label{LZlem8}\cite[Lemma 8]{LZ}
Let $W$ be a module over a Lie superalgebra $L$, $\g$ be a subalgebra of $L$, and $B$ be a $\g$-module. Then the $L$-module homomorphism $\Ind_\g^L(W\otimes B)\rightarrow W\otimes\Ind_\g^L(B)$ induced from the inclusion map $W\otimes B\rightarrow W\otimes\Ind_\g^L(B)$ is an $L$-module isomorphism.
\end{lemma}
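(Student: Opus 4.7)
The statement is the ``tensor identity'' (or projection formula) for induced modules in the super setting. I would prove it in three steps: construct the map via Frobenius reciprocity, expand both sides in PBW bases, and conclude by a triangularity argument on an associated graded.

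\emph{Step 1: Well-definedness of $\Phi$.} First I would verify that the inclusion
$$\iota : W \otimes B \longrightarrow W \otimes \Ind_\g^L(B),\qquad w \otimes b \longmapsto w \otimes (1 \otimes_{U(\g)} b),$$
is a homomorphism of $\g$-modules when both sides carry the diagonal $\g$-action (with $W$ viewed as a $\g$-module by restriction from $L$). This is a short check using the relation $x \otimes_{U(\g)} b = 1 \otimes_{U(\g)} xb$ valid in $\Ind_\g^L(B)$ for $x \in \g$. The universal property of induced modules then yields a unique $L$-homomorphism
$$\Phi : \Ind_\g^L(W \otimes B) \longrightarrow W \otimes \Ind_\g^L(B),\qquad u \otimes_{U(\g)}(w \otimes b) \longmapsto u \cdot \bigl(w \otimes (1 \otimes b)\bigr),$$
where $u \in U(L)$ acts on the target via the diagonal action.

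\emph{Step 2: PBW bases.} Pick a homogeneous ordered basis $\{x_i\}$ of $\g$ and extend by a homogeneous set $\{y_j\}$ to a basis of $L$. By the super PBW theorem, the ordered monomials $y^{\underline{n}} := y_{j_1}\cdots y_{j_r}$ (with the usual parity restrictions on repetitions) form a basis of $U(L)$ as a free right $U(\g)$-module. Consequently, as $\C$-vector spaces,
$$\Ind_\g^L(B)=\bigoplus_{\underline{n}} y^{\underline{n}} \otimes B,\quad \Ind_\g^L(W\otimes B)=\bigoplus_{\underline{n}} y^{\underline{n}}\otimes(W\otimes B),\quad W\otimes\Ind_\g^L(B)=\bigoplus_{\underline{n}} W\otimes(y^{\underline{n}}\otimes B),$$
all three indexed by the same data.

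\emph{Step 3: Filtered/graded argument.} Filter both sides by $y$-degree and compute $\Phi$ on a typical basis element. Iterating the super Leibniz rule for the diagonal action of $y^{\underline{n}}$ on $w \otimes (1 \otimes b)$ gives
$$\Phi\bigl(y^{\underline{n}} \otimes (w \otimes b)\bigr) \;=\; (-1)^{|y^{\underline{n}}|\,|w|}\, w \otimes (y^{\underline{n}} \otimes b) \;+\; (\text{strictly lower } y\text{-degree on the second factor}),$$
because the only way to preserve the full monomial $y^{\underline{n}}$ in the second tensor factor is to transport every $y_j$ across $w$ via the right-hand term of the Leibniz rule, each transport contributing a Koszul sign. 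Thus $\Phi$ is represented by an upper-triangular matrix with nonzero scalar diagonal entries in the chosen bases, and is therefore bijective.

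\emph{Main obstacle.} The genuine work is in Step 3: carefully accounting for Koszul signs when iterating the coproduct $\Delta$ on products of $y_j$'s, and checking that any residual $\g$-piece appearing in a ``lower-degree'' term can be pushed through the tensor product over $U(\g)$ into the $B$-slot without disturbing the triangular structure. Once one verifies that every correction term strictly decreases the $y$-degree of the second factor, the isomorphism statement is immediate.
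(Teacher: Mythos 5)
Your proof is correct, but note that the paper itself contains no proof of this statement: it is imported verbatim as \cite[Lemma 8]{LZ}, so there is no internal argument to compare against. Your route --- Frobenius reciprocity to obtain $\Phi$, the super PBW decomposition of $U(L)$ as a free right $U(\g)$-module on the ordered monomials $y^{\underline{n}}$, and triangularity with respect to the $y$-degree filtration --- is one of the two standard proofs of the tensor identity; the other writes down an explicit inverse $w\otimes(u\otimes_{U(\g)} b)\mapsto \sum \pm\, u_{(2)}\otimes_{U(\g)}(S(u_{(1)})w\otimes b)$ using the comultiplication and antipode of $U(L)$, which avoids choosing a basis but costs a Hopf-algebra computation. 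One remark on your Step 3: the ``residual $\g$-pieces'' you flag as the main obstacle never actually arise. Every monomial landing in the second tensor factor of a correction term is a subword of the ordered monomial $y^{\underline{n}}$ (namely the product of those $y_{j_k}$ that were transported past the first factor, taken in their original order), hence is already an ordered PBW monomial; no straightening, and therefore no commutator and no element of $\g$, is ever produced. With that observed, the associated graded of $\Phi$ is the direct sum of the signed swaps $y^{\underline{n}}\otimes(w\otimes b)\mapsto(-1)^{|y^{\underline{n}}||w|}\,w\otimes(y^{\underline{n}}\otimes b)$, which are visibly bijective, and injectivity plus surjectivity of $\Phi$ follow by induction on the (exhaustive, bounded-below) filtration degree exactly as you indicate.
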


For any $\Vir$-module $V$, define the $\S(\delta)$-module structure on $V$ by letting $F(\delta)\cdot V=0$. Denote the resulting module by $V^{\S(\delta)}$.

\begin{theorem}\label{smooth1}
Let $M$ be a simple smooth $\S$-module of level $\lambda\in\C^*$. Then $M\cong\mathscr V^{\sqrt\lambda}\otimes V^\S$ for some simple smooth $\Vir$-module $V$ and some $\sqrt\lambda\in\C$ with $(\sqrt\lambda)^2=\lambda$, where  $\mathscr V^{\sqrt\lambda}$ is considered as an $\S$-module.
\end{theorem}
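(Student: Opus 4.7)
The plan is to decompose $M$ as an $F$-module using the complete reducibility results of Section \ref{SecF}, and then recover the Virasoro action on the multiplicity space via the coset-type construction afforded by the free-fermion vertex operators $\bar L_k$ of Lemma \ref{KRR}.

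Since $c$ is central in $\S$ and $M$ is a simple (hence at most countable-dimensional) $\S$-module, Schur's lemma shows that $c$ acts as a scalar $\dot c\in\C$. Fix a square root $\sqrt\lambda$ of $\lambda$ and twist by $\sigma_{1/\sqrt\lambda}$ to reduce to $\lambda=1$; a final twist by $\sigma_{\sqrt\lambda}$ will replace $\mathscr V$ with $\mathscr V^{\sqrt\lambda}$. Because $F\hookrightarrow\S$ is a graded embedding, $M$ is smooth over $F$; Lemma \ref{cr2} makes it completely reducible, and Lemma \ref{rFm}(2) forces every simple summand to be isomorphic to $\mathscr V$. Thus there is an $F$-module isomorphism $M\cong\mathscr V\otimes W$ in which $F$ acts only on the first factor.

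Define $\tilde L_k:=L_k-\bar L_k$ on $M$; the vertex operator $\bar L_k$ makes sense because $M$ is smooth over $F$. The matching relations $[L_k,\psi_n]=(-n-\tfrac{k}{2})\psi_{k+n}=[\bar L_k,\psi_n]$ (the second from Lemma \ref{KRR}) give $[\tilde L_k,\psi_n]=0$, so $\tilde L_k$ is $F$-linear. Since $\mathscr V$ is absolutely simple over $F$, a standard argument shows that every $F$-linear endomorphism of $\mathscr V\otimes W$ has the form $\id_\mathscr V\otimes B$ with $B\in\text{End}(W)$, so there exist $A_k\in\text{End}(W)$ with $\tilde L_k(v\otimes w)=v\otimes A_k w$. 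Because each $\bar L_j$ is assembled from the $\psi_n$'s (acting through finite sums on smooth vectors), the $F$-linearity of $\tilde L_k$ also yields $[\tilde L_k,\bar L_j]=0$, hence $[\tilde L_m,\tilde L_n]=[L_m,L_n]-[\bar L_m,\bar L_n]$; using Lemma \ref{KRR} this collapses to $(m-n)\tilde L_{m+n}+\delta_{m,-n}\tfrac{m^3-m}{12}(\dot c-\tfrac12)$, endowing $W$ with a $\Vir$-module structure of central charge $\dot c-\tfrac12$.

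To finish, a proper nonzero $\Vir$-submodule $W'\subsetneq W$ would produce the proper nonzero $\S$-submodule $\mathscr V\otimes W'\subsetneq M$, contradicting simplicity of $M$, so $W$ is simple. For smoothness, pick any $w\in W$ and use smoothness of $M$ at $1\otimes w$ to obtain $N$ with $L_k(1\otimes w)=0$ for $k>N$; a direct normal-ordering computation yields $\bar L_k\cdot 1=0$ for $k>0$, so that $L_k(1\otimes w)=1\otimes A_kw$ and hence $A_kw=0$ for $k>N$. Setting $V:=W$ and untwisting gives $M\cong\mathscr V^{\sqrt\lambda}\otimes V^\S$. The main technical step is the identity $[\tilde L_k,\bar L_j]=0$ and the resulting bracket calculation: conceptually, operators commuting with the full fermion algebra must commute with all vertex operators built out of it, but rigorously one has to invoke smoothness of $M$ to justify rearranging the infinite sums defining the $\bar L_j$.
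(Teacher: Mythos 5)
Your route is genuinely different from the paper's. The authors locate a vector $w$ with $U(F)w\cong\mathscr V^{\sqrt\lambda}$ and $L_iw=0$ for $i\geq n$, realize $M$ as a simple quotient of $\Ind_{\S^{(n,-\infty)}}^{\S}(\mathscr V^{\sqrt\lambda}\otimes\C u)\cong\mathscr V^{\sqrt\lambda}\otimes\Ind_{\S^{(n,-\infty)}}^{\S}\C u$ via Lemma \ref{LZlem8}, and then show directly (by acting with the $\psi_i$) that every submodule of such a tensor product is of the form $\mathscr V^{\sqrt\lambda}\otimes W$. You instead run a coset/commutant construction with $\tilde L_k=L_k-\bar L_k$. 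The outline is viable and arguably more conceptual, but it contains one genuine error at its central step.

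The error is the assertion that $\mathscr V$ is absolutely simple over $F$. It is not: the odd operator $\theta=\xi_0-\partial/\partial\xi_0$ squares to $-1$ and anticommutes with every $\psi_n$ (including $\psi_0=\tfrac{1}{\sqrt 2}(\xi_0+\partial/\partial\xi_0)$), so $\mathrm{End}_F(\mathscr V)=\C\oplus\C\theta$ is a queer-type division superalgebra; equivalently $\mathscr V$ is not strictly simple, which is precisely the obstruction the paper records after Lemma \ref{simple-app} as the reason Theorem \ref{smooth1} cannot be deduced from that lemma. Consequently ``every $F$-linear endomorphism of $\mathscr V\otimes W$ has the form $\id\otimes B$'' is false as stated ($\theta\otimes\id$ is a counterexample), and an even operator centralizing $F$ could a priori carry a component $\theta\otimes B'$ with $B'$ odd. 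The step can be repaired, but you must supply the repair: after choosing even isomorphisms of all simple $F$-summands of $M$ with $\mathscr V$ (possible because $\mathscr V^{-1}\cong\mathscr V\cong\Pi\mathscr V$), the multiplicity space $W$ may be taken purely even, so $\mathrm{End}(W)_{\bar 1}=0$ and the even part of the centralizer $(\C\oplus\C\theta)\otimes\mathrm{End}(W)$ collapses to $\id\otimes\mathrm{End}(W)$; one also needs that the \emph{even} part of $\mathrm{End}_F(\mathscr V)$ is exactly $\C$, which follows from Schur's lemma for countable-dimensional simple modules. Since each $\tilde L_k$ is even, this yields $\tilde L_k=\id\otimes A_k$, and the remainder of your argument (the Virasoro relations at central charge $\dot c-\tfrac12$, simplicity and smoothness of $W$) then goes through. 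Without this justification the key step rests on a false premise rather than a routine omission.
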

\begin{proof}
Let $v$ be a nonzero homogeneous vector in $M$. Then there is an $m\in\N$ such that $\psi_j\cdot v=0$ for all $j>m$. By Lemma \ref{rFm}, $U(F_{[m]})v$ has a simple $F_{[m]}$-submodule that is isomorphic to $\mathscr V_{[m]}^{\sqrt\lambda}$. Then $U(F_{[m]})v$ has a nonzero even vector $w$ with $\psi_j\cdot w=0,j\in\N$. So $U(F)w$ is a simple $F$-quotient of $\mathrm{Ind}_{F^{(0)}}^F(\C w\oplus\C\psi_0 w)\cong\mathscr V^{\sqrt\lambda}$. Thus, $U(F)w\cong\mathscr V^{\sqrt\lambda}$ as $F$-modules.

Let $n\in\N$ with $n\geqslant 2m$ such that $L_i\cdot w=0$ for all $i\geqslant n$. Then $U(F)w$ is a simple module over $\S^{(n,-\infty)}$. Suppose $c$ acts on $M$ as a scalar $\mu$. Define the one-dimensional $\S^{(n,-\infty)}$-module $\C u$ by
$$L_i\cdot u=F\cdot u=0, \,\,\,c\cdot u=(\mu-\frac{1}{2})u,i\geqslant n.$$
Then $U(F)w\cong\mathscr V^{\sqrt\lambda}\otimes\C u$ as $\S^{(n,-\infty)}$-modules. By Lemma \ref{LZlem8},
$$\Ind_{\S^{(n,-\infty)}}^\S U(F)w\cong\Ind_{\S^{(n,-\infty)}}^\S(\mathscr V^{\sqrt\lambda}\otimes\C u)\cong\mathscr V^{\sqrt\lambda}\otimes\Ind_{\S^{(n,-\infty)}}^\S\C u.$$
So $M$ is a simple quotient of $\mathscr V^{\sqrt\lambda}\otimes\Ind_{\S^{(n,-\infty)}}^\S\C u$. Note that $\Ind_{\S^{(n,-\infty)}}^\S\C u$ is a smooth $\Vir$-module. It suffices to prove that any submodule of $\mathscr V^{\sqrt\lambda}\otimes\Ind_{\S^{(n,-\infty)}}^\S\C u$ is of the form $\mathscr V^{\sqrt\lambda}\otimes W$, where $W$ is a $\Vir$-submodule of $\Ind_{\S^{(n,-\infty)}}^\S\C u$.

Let $M'$ be a nonzero submodule of $\mathscr V^{\sqrt\lambda}\otimes\Ind_{\S^{(n,-\infty)}}^\S\C u$. We claim that $M'=\mathscr V^{\sqrt\lambda}\otimes V$, where $V=\{x\in\Ind_{\S^{(n,-\infty)}}^\S\C u:  \mathscr V^{\sqrt\lambda}\otimes\C x\subset M'\}$. It follows that $V$ is a $\Vir$-submodule of $\Ind_{\S^{(n,-\infty)}}^\S\C u$ and we are done.

Let $a$ be any nonzero homogeneous vector in $M'$. Then $a$ is of the form $\sum_{i=1}^k v_i\otimes u_i$, where $v_i\in\mathscr V^{\sqrt\lambda},u_i\in(\Ind_{\S^{(n,-\infty)}}^\S\C u)\setminus\{0\}$ and $v_1,\dots,v_k$ are linearly independent. Since $u_1,\dots,u_k$ are of the same parity, so $v_1,\dots,v_k$ are of the same parity. By repeatedly letting $\psi_i,i\in\Z_+$, act on $a$, we get $1\otimes u_i\in M'$ up to a scalar for some $i\in\{1,\dots,k\}$. So $\mathscr V^{\sqrt\lambda}\otimes u_i\subset M'$ and $u_i\in V$. Using induction on $k$, we have $u_1,\dots,u_k\in V$. Therefore, $M'=\mathscr V^{\sqrt\lambda}\otimes V$.
\end{proof}

With similar arguments, we have the following result.

\begin{theorem}\label{smooth2}
Let $M$ be a simple smooth $\S(\frac{1}{2})$-module  of level $\lambda\in\C^*$. Then $M\cong\mathscr V(\frac{1}{2})^{\sqrt\lambda}\otimes V^{\S(\frac{1}{2})}$ for some simple smooth $\Vir$-module $V$ and some $\sqrt\lambda\in\C$ with $(\sqrt\lambda)^2=\lambda$, where  $\mathscr V(\frac{1}{2})^{\sqrt\lambda}$ is considered as an $\S(\frac{1}{2})$-module.
\end{theorem}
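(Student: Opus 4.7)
The plan is to follow the proof of Theorem \ref{smooth1} essentially verbatim, replacing $F$, $\mathscr V$ and $\S$ by their $\delta=\frac{1}{2}$ counterparts. The only structural simplification comes from the absence of a zero-mode $\psi_0$ in $F(\frac{1}{2})$: in the $\delta=0$ proof the vacuum of $\mathscr V^{\sqrt\lambda}$ is generated from $w$ together with $\psi_0 w$, while in the present case $w$ alone suffices, so one does not need the intermediate induction from $F^{(0)}$.

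Concretely, pick a nonzero homogeneous $v\in M$ and use smoothness of $M$ to choose $m\in\N$ with $\psi_j v=0$ for every $j\in\Z+\frac{1}{2}$ satisfying $j>m$. The $F(\frac{1}{2})$-analogue of Lemma \ref{rFm} (stated in the excerpt just after Lemma \ref{rFm}) applied to the finite-dimensional submodule $U(F(\frac{1}{2})_{[m]})v$ over the evident Clifford-type subalgebra $F(\frac{1}{2})_{[m]}$ provides a nonzero even vector $w\in U(F(\frac{1}{2})_{[m]})v$ annihilated by $\psi_{1/2},\psi_{3/2},\dots,\psi_{m-1/2}$. Since $[\psi_j,\psi_i]=\delta_{j,-i}z$ vanishes whenever $j\geq m+\frac{1}{2}$ and $|i|\leq m-\frac{1}{2}$, the operators $\psi_j$ with $j\geq m+\frac{1}{2}$ supercommute with $F(\frac{1}{2})_{[m]}$, hence $\psi_j w=0$ for every positive $j\in\Z+\frac{1}{2}$, and the cited lemma upgrades this to an $F(\frac{1}{2})$-isomorphism $U(F(\frac{1}{2}))w\cong\mathscr V(\frac{1}{2})^{\sqrt\lambda}$.

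Next, choose $n\in\N$ with $n\geq 2m$ and $L_iw=0$ for all $i\geq n$, and let $\mu$ denote the central charge of $M$. Exactly as in Theorem \ref{smooth1}, $U(F(\frac{1}{2}))w$ is an $\S(\frac{1}{2})^{(n,-\infty)}$-submodule of $M$ isomorphic to $\mathscr V(\frac{1}{2})^{\sqrt\lambda}\otimes\C u$, where $\C u$ is the one-dimensional $\S(\frac{1}{2})^{(n,-\infty)}$-module on which $L_i$ ($i\geq n$) and $F(\frac{1}{2})$ act as zero and $c\cdot u=(\mu-\frac{1}{2})u$. Applying Lemma \ref{LZlem8} gives
$$\Ind_{\S(\frac{1}{2})^{(n,-\infty)}}^{\S(\frac{1}{2})}U(F(\frac{1}{2}))w\;\cong\;\mathscr V(\frac{1}{2})^{\sqrt\lambda}\otimes\Ind_{\S(\frac{1}{2})^{(n,-\infty)}}^{\S(\frac{1}{2})}\C u,$$
so $M$ is a simple quotient of the right-hand side, whose second tensor factor is a smooth $\Vir$-module because $F(\frac{1}{2})$ acts trivially on it.

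It remains to show that every nonzero submodule of $\mathscr V(\frac{1}{2})^{\sqrt\lambda}\otimes\Ind_{\S(\frac{1}{2})^{(n,-\infty)}}^{\S(\frac{1}{2})}\C u$ has the form $\mathscr V(\frac{1}{2})^{\sqrt\lambda}\otimes W$ for a $\Vir$-submodule $W$. The argument is the one used in the last paragraph of Theorem \ref{smooth1}: expand a nonzero homogeneous element as $\sum_{i=1}^k v_i\otimes u_i$ with the $v_i\in\mathscr V(\frac{1}{2})^{\sqrt\lambda}$ linearly independent and of common parity, then act by sufficiently many positive-index annihilators $\psi_{1/2},\psi_{3/2},\dots$ to reduce the $\mathscr V(\frac{1}{2})^{\sqrt\lambda}$-factor down to its vacuum, isolating some pure tensor $1\otimes u_{i_0}$, and finally induct on $k$. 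The main point requiring care is that this stripping continues to work in the absence of a zero mode, but this is automatic from the exterior-algebra description of $\mathscr V(\frac{1}{2})$ in Section~3 — the half-integer annihilators separate basis vectors in $\mathscr V(\frac{1}{2})$ just as well as the integer-mode ones do in $\mathscr V$. The resulting simple smooth $\Vir$-module $V$ yields $M\cong\mathscr V(\frac{1}{2})^{\sqrt\lambda}\otimes V^{\S(\frac{1}{2})}$.
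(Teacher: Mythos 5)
Your proposal is correct and is essentially the paper's own argument: the paper proves Theorem \ref{smooth2} by simply remarking that the proof of Theorem \ref{smooth1} carries over with the obvious $\delta=\tfrac{1}{2}$ substitutions, which is exactly what you carry out (including the correct observation that the absence of the zero mode $\psi_0$ only simplifies the construction of the vacuum vector $w$). No discrepancies.
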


\begin{definition}
	A module $M$ over a  Lie superalgebra $A$ is called strictly simple if it is a simple module over the   algebra $A$ (forgetting the $\mathbb Z_2$-gradation).
\end{definition}

Let ${\mathfrak g}={\mathfrak a}\ltimes{\mathfrak b}$ be a Lie superalgebra where ${\mathfrak a}$ is a subalgebra  of ${\mathfrak g}$ and ${\mathfrak b}$ is an ideal of ${\mathfrak g}$. Let $M$ be a  ${\mathfrak g}$-module with a   ${\mathfrak b}$-submodule $H$ so that the ${\mathfrak b}$-submodule structure on $H$ can be extended to a ${\mathfrak g}$-module structure on $H$. We denote this ${\mathfrak g}$-module by $H^{\mathfrak g}$. For any ${\mathfrak a}$-module   $U$, we can make it into a ${\mathfrak g}$-module by ${\mathfrak b}U=0$. We denote this ${\mathfrak g}$-module by $U^{\mathfrak g}$. Then by Lemma 3.3 in \cite{LPXZ} we have

\begin{lemma}\label{simple-app} Let ${\mathfrak g}={\mathfrak a}\ltimes{\mathfrak b}$ be a countable dimensional Lie superalgebra.
	Let $M$ be a  simple ${\mathfrak g}$-module
	with a strictly simple  ${\mathfrak b}$-submodule $H$ so that an $H^{\mathfrak g}$ exists.
	Then $M\cong U^{\mathfrak{g}}\otimes H^{\mathfrak{g}}$ as $\mathfrak{g}$-modules for some  simple  ${\mathfrak a}$-module $U$.
\end{lemma}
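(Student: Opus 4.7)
The plan is to construct a multiplicity space $U$ via a $\mathrm{Hom}$-functor and identify $M$ with $U^{\mathfrak g}\otimes H^{\mathfrak g}$ through an evaluation map, with the Jacobson density theorem providing the crucial injectivity input. First I would set
$$U := \mathrm{Hom}_{U({\mathfrak b})}(H^{\mathfrak g},\, M),$$
viewed in the ungraded sense (the strict simplicity of $H$ removes any parity ambiguity). Because $H^{\mathfrak g}$ is a genuine ${\mathfrak g}$-module and $[{\mathfrak a},{\mathfrak b}]\subseteq{\mathfrak b}$, the space $U$ inherits a natural adjoint ${\mathfrak a}$-module structure, and I write $U^{\mathfrak g}$ for the resulting ${\mathfrak g}$-module on which ${\mathfrak b}$ acts as zero.

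Next I would introduce the evaluation map
$$\Phi\colon U^{\mathfrak g}\otimes H^{\mathfrak g}\longrightarrow M,\qquad \phi\otimes h\longmapsto\phi(h),$$
and verify by a direct computation that it is a ${\mathfrak g}$-module homomorphism. The inclusion $\iota\colon H\hookrightarrow M$ is a nonzero element of $U$, so $H\subseteq\mathrm{Im}(\Phi)$, and simplicity of $M$ forces $\Phi$ to be surjective. Once injectivity is known, the simplicity of $U$ as an ${\mathfrak a}$-module is automatic: any proper ${\mathfrak a}$-submodule $U'\subsetneq U$ would yield a proper ${\mathfrak g}$-submodule $U'\otimes H^{\mathfrak g}$ of $M$, contradicting simplicity of $M$.

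The main obstacle is injectivity of $\Phi$, which I would establish via the Jacobson density theorem. The countable-dimensional hypothesis on ${\mathfrak g}$ yields, through a Dixmier-type Schur lemma applied to the strictly simple $U({\mathfrak b})$-module $H$, that $\mathrm{End}_{U({\mathfrak b})}(H)=\C$; combined with strict simplicity this gives the density statement that for any linearly independent $h_1,\dots,h_k\in H$ and any $v\in H$ there exists $b\in U({\mathfrak b})$ with $b\cdot h_1=v$ and $b\cdot h_i=0$ for $i\geq 2$. To conclude, take a nonzero $x=\sum_{i=1}^k\phi_i\otimes h_i\in\ker\Phi$ with $k$ minimal; minimality forces $h_1,\dots,h_k$ linearly independent and all $\phi_i\neq 0$. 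For any $v\in H$, choosing $b$ as above gives $b\cdot x=\phi_1\otimes v\in\ker\Phi$, whence $0=\Phi(b\cdot x)=\phi_1(v)$. Varying $v$ yields $\phi_1=0$, contradicting the minimality of $k$, and so $\ker\Phi=0$, completing the isomorphism.
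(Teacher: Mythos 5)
The paper offers no proof of this lemma at all: it is quoted from Lemma~3.3 of \cite{LPXZ}, so you are supplying an argument where the authors supply only a citation. Your architecture --- the multiplicity space $U=\mathrm{Hom}_{U(\mathfrak b)}(H^{\mathfrak g},M)$ with its adjoint $\mathfrak a$-action, the evaluation map $\Phi$, surjectivity from simplicity of $M$, and injectivity from the countable-dimensional Schur lemma plus Jacobson density --- is the standard route for results of this kind and is certainly the intended one; the surjectivity step and the deduction that $U$ is a simple $\mathfrak a$-module are fine.

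The gap is in the parity bookkeeping, which the parenthetical ``the strict simplicity of $H$ removes any parity ambiguity'' does not actually dispose of. If $U$ is the ungraded Hom space (i.e.\ $\phi(bh)=b\phi(h)$ with no signs), then $\Phi$ is not a $\mathfrak b$-module map: the super tensor action gives $b\cdot(\phi\otimes h)=(-1)^{|b||\phi|}\phi\otimes bh$, so for odd $\phi$ and odd $b$ one gets $\Phi(b\cdot(\phi\otimes h))=-b\,\Phi(\phi\otimes h)$. One is therefore forced to use the super-Hom condition $\phi(bh)=(-1)^{|b||\phi|}b\phi(h)$; but then an inhomogeneous $b=b_0+b_1\in U(\mathfrak b)$ acts on $x=\sum_i\phi_i\otimes h_i$ by $\sum_i\bigl(\phi_i\otimes b_0h_i+\phi_i'\otimes b_1h_i\bigr)$, where $\phi_i'$ denotes the parity flip of $\phi_i$, and the element $b$ produced by the density theorem for the ungraded module $H$ has no reason to be homogeneous. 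Knowing $bh_1=v$ and $bh_i=0$ for $i\geq 2$ then does not give $b\cdot x=\phi_1\otimes v$: if, say, $\phi_1$ is even and $\phi_2$ is odd, one obtains $b\cdot x=\phi_1\otimes v+2\phi_2\otimes b_0h_2+\cdots$, with the extra terms uncontrolled, so the displayed identity fails as written. This is reparable: take $x\in\ker\Phi$ homogeneous with homogeneous tensor factors (possible since $\Phi$ is even), so that the parity of $\phi_i$ determines that of $h_i$, and apply Dixmier--Schur and density not to $U(\mathfrak b)$ but to the algebra of twisted operators $h\mapsto(-1)^{|b||h|}bh$, i.e.\ to $U(\mathfrak b)_{\bar 0}+J\,U(\mathfrak b)_{\bar 1}$ with $J$ the grading involution of $H$; this is again a countable-dimensional algebra acting irreducibly on the strictly simple $H$, and with it your final computation does yield $b\cdot x=\pm\phi_1\otimes v$ and the contradiction. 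Without some version of this step the injectivity argument is incomplete.
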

Note that Theorem \ref{smooth1} can not be proved directly by using Lemma \ref{simple-app} because the $F$-module $\mathscr V^{\sqrt\lambda}$ is not strictly simple.

We know that  simple smooth $\Vir$-modules were classified in \cite[Theorem 2]{MZ}.

Applying Theorem \ref{smooth1} to highest weight $\S$-modules we obtain the following theorem.


%
%
%
%

\begin{theorem}\label{hwm2} Let $V$ be a simple highest weight $\S$-module. \begin{itemize}
	\item[(1)]  If $z$ acts on $V$ as a scalar $\lambda\in\C^*$, then $V\cong\mathscr V^{\sqrt\lambda}\otimes M^\S$ for some simple highest weight $\Vir$-module $M$ and some $\sqrt\lambda\in\C$ with $(\sqrt\lambda)^2=\lambda$, where  $\mathscr V^{\sqrt\lambda}$ is considered as an $\S$-module.
\item[(2)]	
If $zV=0$, then $FV=0$ and $V$ is a simple highest weight $\Vir$-module.
\end{itemize}
\end{theorem}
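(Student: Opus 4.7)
The plan is to treat the two parts separately, invoking Theorem \ref{smooth1} for part (1) and using a weight/simplicity argument for part (2). First I would note that a simple highest weight $\S$-module $V$ is automatically smooth: writing $V=U(\S_-)v_0$ with $\S_+v_0=0$, for any $v=xv_0$ with $x\in U(\S_-)$ of bounded depth and any $y\in\S_k$ with $k$ large, commuting $y$ past $x$ produces only terms where $y$ or its brackets with components of $x$ reach $v_0$ in positive weight, hence vanish.

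For part (1), Theorem \ref{smooth1} yields $V\cong\mathscr V^{\sqrt\lambda}\otimes M^\S$ for some simple smooth $\Vir$-module $M$. To upgrade $M$ to a highest weight module, note that $L_0$ acts diagonally on the tensor product as $L_0\otimes 1+1\otimes L_0$ and $\supp(\mathscr V^{\sqrt\lambda})=\frac{1}{16}+\Z_+$, so $\supp(V)\subset\alpha+\Z_+$ forces $\supp(M)\subset(\alpha-\frac{1}{16})+\Z_+$; a weight vector $u_0\in M$ at the minimum weight is then killed by $\Vir_+$ for weight reasons, making $M$ a simple highest weight $\Vir$-module.

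For part (2), the main work is to show $Fv_0=0$. I would first prove $\psi_0v_0=0$. Using $[L_k,\psi_0]=-\frac{k}{2}\psi_k$ and $[\psi_k,\psi_0]=\delta_{k,0}z$, one checks $\psi_0v_0$ is annihilated by $\S_+$ and has weight $\alpha$. If $\psi_0v_0\neq 0$, simplicity of $V$ gives $V=U(\S)\psi_0v_0=U(\S_-)\psi_0v_0$ (since $\S_+$ annihilates $\psi_0v_0$, and $\S_0$ acts on it by scalars because $\psi_0^2v_0=\frac{1}{2}zv_0=0$); the weight-$\alpha$ subspace of the right-hand side is $\C\psi_0v_0$, so $v_0=c\psi_0v_0$ for some $c\in\C$. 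Applying $\psi_0$ then gives $\psi_0v_0=c\psi_0^2v_0=\frac{c}{2}zv_0=0$, a contradiction. Next I would prove $\psi_{-k}v_0=0$ for all $k\geq 1$ by induction. Using $\psi_jv_0=0$ for $j\geq 0$ together with the induction hypothesis, the formulas $L_m\psi_{-k}v_0=(k-\frac{m}{2})\psi_{m-k}v_0$ and $\psi_m\psi_{-k}v_0=\delta_{m,k}zv_0=0$ for $m>0$ all vanish, so $\psi_{-k}v_0$ is a highest weight vector of weight $\alpha+k>\alpha$; if nonzero, the submodule it generates would equal $V$, yet has support in $(\alpha+k)+\Z_+$, which excludes $v_0$, a contradiction.

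Having shown $Fv_0=0$, the subspace $\Ann_V(F)=\{v\in V:Fv=0\}$ is an $\S$-submodule of $V$ (because $F$ is an ideal of $\S$: for $v\in\Ann_V(F)$, $x\in\S$, $y\in F$, one has $y(xv)=\pm xyv+[y,x]v=0+0$ since $[y,x]\in F$ and $v\in\Ann_V(F)$). Hence $\Ann_V(F)=V$ by simplicity, i.e.\ $FV=0$. Then $V$ becomes a $\Vir$-module, simple because any $\Vir$-submodule is automatically an $\S$-submodule, and still highest weight with highest weight vector $v_0$. The main obstacle will be the inductive step in part (2): one must verify that each $\psi_{-k}v_0$ is genuinely a highest weight vector of strictly larger weight using only the previously established vanishings, so that the simplicity/support argument applies.
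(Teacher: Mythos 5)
Your proposal is correct and follows essentially the same route as the paper: part (1) is deduced from Theorem \ref{smooth1} together with a support-set observation, and part (2) is a support/PBW argument at the highest weight vector. The only organizational difference is in (2): you establish $Fv_0=0$ outright (killing $\psi_0v_0$ and then each $\psi_{-k}v_0$ by induction) before invoking that $\Ann_V(F)$ is an $\S$-submodule, whereas the paper only arranges $\psi_0v=0$ and instead bounds $F\cdot V$ inside $U(\Vir_-)U(F_-)F_-v$, whose support misses the highest weight — both hinge on the same weight comparison.
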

\begin{proof} (1) This follows from Theorem \ref{smooth1} since highest weight $\Vir$-module are smooth modules.

(2) Let $v$ be a nonzero homogeneous highest weight vector of $V$. Then $\psi_k\cdot v=0$ for all $k>0$. Replacing $v$ with $\psi_0v$ if necessary, assume that $\psi_0v=0$. By the simplicity of $V$ and the PBW theorem, $V=U(\Vir_-)U(F_-)v$. Then
$$F\cdot V=F\cdot U(\Vir_-)U(F_-)v\subset U(\Vir_-)U(F_-)F_-v.$$
Considering the support set of $U(\Vir_-)U(F_-)F_-v$ we have $v\notin F\cdot V$. So $F\cdot V=0$ and we are done.
\end{proof}

We remark that in this paper we are not able to determine all simple smooth $\S(\delta)$-modules $V$  of level $0$.
%
%
%

\section{Harish-Chandra modules}
%

A weight $\S(\delta)$-module is called a Harish-Chandra module if its weight spaces are all finite-dimensional. A weight $\S(\delta)$-module is called a bounded module if the dimensions of its weight spaces are uniformly bounded.
\begin{lemma}\label{HC}
A simple Harish-Chandra $\S$-module is either a highest weight module, a lowest weight module or a bounded module.
\end{lemma}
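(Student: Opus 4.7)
The proof reduces in two steps to the classification of simple Harish-Chandra modules over the quotient $\S(\delta)/\C z$ from~\cite{DCL}. Let $V$ be a simple Harish-Chandra $\S$-module. By Schur's lemma applied to the countable-dimensional $U(\S)$, the central elements $c$ and $z$ act as scalars $\mu,\ell\in\C$; the relations $[L_0,L_n]=-nL_n$ and $[L_0,\psi_n]=-n\psi_n$ force $\supp(V)\subset\alpha_0+\Z$ for some $\alpha_0\in\C$.

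The first step is to show that $\ell=0$. Suppose for contradiction $\ell\neq 0$; after twisting via the automorphism $\sigma_{1/\sqrt\ell}$ we may assume $\ell=1$. Pick any nonzero weight vector $v\in V$ and any $m\in\Z_+$. The cyclic $F_{[m]}$-submodule $U(F_{[m]})v\subset V$ is finite-dimensional with $z$ acting as the identity, hence by Lemmas~\ref{cr1} and~\ref{rFm}(1) it decomposes as a direct sum of simple submodules each isomorphic to $\mathscr V_{[m]}$ (possibly twisted by $\sigma_{-1}$). Let $\bar L_0^{[m]}\in U(F_{[m]})$ denote the truncation of the Sugawara-type operator in~\eqref{barL} obtained by restricting the sum to $|j|\leqslant m$; a direct computation yields $[L_0-\bar L_0^{[m]},\psi_k]=k(z-1)\psi_k$ for $|k|\leqslant m$, which vanishes on $V$. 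Thus $L_0-\bar L_0^{[m]}$ commutes with $F_{[m]}$ on $V$ and by Schur's lemma acts as a single scalar on each simple $F_{[m]}$-constituent of $U(F_{[m]})v$. Consequently, the $L_0$-weight multiplicities of every such constituent coincide with the $\bar L_0^{[m]}$-weight multiplicities of $\mathscr V_{[m]}$ up to a uniform additive shift. The latter count subsets of $\{0,1,\dots,m\}$ with prescribed sum, and their maximum value over the sum grows without bound as $m$ increases. Hence for $m$ sufficiently large some $L_0$-weight space of $V$ has dimension exceeding any prescribed bound, contradicting the Harish-Chandra hypothesis.

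Once $\ell=0$, the action of $\S$ on $V$ factors through $\S(\delta)/\C z$, and the classification of simple Harish-Chandra modules over $\S(\delta)/\C z$ in~\cite{DCL} delivers the required trichotomy: highest weight, lowest weight, or bounded. The principal obstacle is the first step—ruling out $\ell\neq 0$—and the key device there is the truncated Sugawara operator $\bar L_0^{[m]}$: being $F_{[m]}$-central modulo $L_0$ on modules with $z$ acting as $1$, Schur's lemma pins its $L_0$-shift to a single scalar on each Fock-type $F_{[m]}$-constituent of a cyclic submodule, transporting the unboundedly growing multiplicities of $\mathscr V_{[m]}$ into the $L_0$-weight spaces of $V$.
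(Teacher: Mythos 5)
Your reduction rests on a claim that is false: a simple Harish-Chandra $\S$-module need not have level $\ell=0$. The module $\mathscr V$ itself (equivalently, $\mathscr V^{\sqrt\lambda}\otimes V^{\S}$ in Theorem \ref{HC1}(2)) is a simple Harish-Chandra $\S$-module on which $z$ acts as a nonzero scalar: its weight space of weight $\frac{1}{16}+s$ has dimension twice the number of partitions of $s$ into distinct positive parts, which is finite for every $s$ but unbounded as $s\to\infty$. The precise point where your argument breaks is the final step. Transporting the $\bar L_0^{[m]}$-multiplicities of $\mathscr V_{[m]}$ into $V$ shows that, as $m$ grows, \emph{some} weight space acquires large dimension — but the weight at which this happens moves with $m$ (it sits near the middle of the spectrum of $\mathscr V_{[m]}$, shifted by the Schur scalar of the constituent). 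This contradicts a \emph{uniform} bound on weight multiplicities, not the Harish-Chandra hypothesis that each individual weight space be finite-dimensional. In fact your computation is essentially the paper's proof of Lemma \ref{bounded1}, which asserts exactly what your argument actually establishes: a simple \emph{bounded} $\S$-module has level $0$. With the first step gone, the second step collapses as well: the action does not factor through $\S/\C z$ in general, and the paper explicitly remarks that the modules of Theorem \ref{HC1}(2),(3) do not occur in the classification of \cite{DCL}, so no such reduction can yield the full statement.

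The paper's proof of the trichotomy is of a different nature and makes no use of the level. Assuming $M$ is not bounded, it passes to a Harish-Chandra $\Vir$-subquotient with no trivial submodules in itself or its restricted dual, applies the Martin--Piard decomposition \cite[Prop.~3.3]{MP} to produce a weight vector $v$ with $L_kv=0$ for all $k\in\N$ generating a non-bounded $\Vir$-module, and then uses a dimension count ($\dim(U(\Vir)v)_{h+k}>2\dim M_h$) to find $w$ with $L_kw=\psi_kw=0$; the bracket $[L_{k+j},\psi_k]$ then forces $\S_nw=0$ for all $n\geq 2k$, whence $M$ is a highest (or lowest) weight module by \cite[Lemma~1.6]{M}. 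To repair your proof you would need an argument of this kind, valid at nonzero level, rather than a reduction to the quotient $\S/\C z$.
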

\begin{proof}
Suppose $M$ is a simple Harish-Chandra $\S$-module that is not bounded. Then $V=U(\Vir)\cdot(\oplus_{\alpha\neq 0}M_\alpha)$ is the unique minimal $\Vir$-submodule of $M$ such that $M/V$ is a trivial $\Vir$-module. Let $T$ be the maximal trivial $\Vir$-submodule of $V$. Then $\bar V=V/T$ is an Harish-Chandra $\Vir$-module such that $\bar V$ and $\bar V^*$ do not contain trivial $\Vir$-submodules, where $\bar V^*$ is the restricted dual of $\bar V$. By Proposition 3.3 in \cite{MP}, $\bar V=V^-\oplus B\oplus V^+$, where $V^+$ (resp. $V^-$) is the maximal $\Vir$-submodule of $\bar V$ with lower (resp. upper) bounded weights, and $B$ is a bounded $\Vir$-submodule of $\bar V$.

Without loss of generality, assume $V^+\neq 0$. Then $V$ contains a nonzero homogeneous weight vector $v$ such that $L_k\cdot v=0$ for all $k\in\N$, and $U(\Vir)v$ is not a trivial module. Then $U(\Vir)v$ has a nontrivial simple $\Vir$-subquotient that is a highest weight $\Vir$-module. So $U(\Vir)v$ is not bounded. Suppose $v\in V_h$. Then there is a $k\in\N$ such that $$\dim(U(\Vir)v)_{h+k}>2\dim M_h.$$
Then there exists $w\in(U(\Vir)v)_{h+k}$ such that $L_k\cdot w=\psi_k\cdot w=0$. Also, for any $j\in\N$, $L_{k+j}\cdot w=0$. So
$$0=[L_{k+j},\psi_k]\cdot w=-\frac{1}{2}(3k+j)\psi_{2k+j}\cdot w.$$
Thus, $\S_n\cdot w=0$ for all $n\geqslant 2k$. By Lemma 1.6 in \cite{M}, $M$ has a highest weight. Then $M$ is a highest weight module.
\end{proof}
\begin{lemma}\label{bounded1}
Suppose $V$ is a simple bounded $\S$-module. Then $zV=0$.
\end{lemma}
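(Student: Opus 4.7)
The plan is a proof by contradiction: assume $z$ acts on $V$ as a scalar $\lambda\in\bC^*$, and I will exhibit weight spaces of $V$ of arbitrarily large dimension, contradicting the boundedness hypothesis. The first step is to jointly diagonalise the ``fermionic number operators'' $\psi_j\psi_{-j}$ for $j\in\bN$. The relation $\psi_j\psi_{-j}+\psi_{-j}\psi_j=\lambda$ together with $\psi_{\pm j}^2=0$ gives $(\psi_j\psi_{-j})^2=\lambda\,\psi_j\psi_{-j}$, so each of these operators is semisimple on $V$ with spectrum in $\{0,\lambda\}$; for $j\neq k$ the operators $\psi_j\psi_{-j}$ and $\psi_k\psi_{-k}$ commute (since all four anticommutators $\{\psi_{\pm j},\psi_{\pm k}\}$ vanish), and all commute with $L_0$. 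Hence I obtain a refinement of the weight decomposition
$$V=\bigoplus_{I\subseteq\bN}V^{(I)},\qquad V^{(I)}=\bigl\{v\in V:\psi_j\psi_{-j}v=\lambda v\text{ for }j\in I,\ \psi_j\psi_{-j}v=0\text{ for }j\in\bN\setminus I\bigr\}.$$
Using $\lambda\neq 0$, the condition $v\in V^{(I)}$ is equivalent to $\psi_jv=0$ for $j\in I$ and $\psi_{-j}v=0$ for $j\in\bN\setminus I$.

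Second, I pick any $\alpha\in\bC$ and $I\subseteq\bN$ with $V^{(I)}\cap V_\alpha\neq 0$, fix $0\neq v$ therein, and for every pair of finite subsets $A\subseteq I$ and $B\subseteq\bN\setminus I$ form
$$v_{A,B}\;:=\;\Bigl(\prod_{a\in A}\psi_{-a}\Bigr)\Bigl(\prod_{b\in B}\psi_b\Bigr)v\qquad(\text{each product taken in decreasing order of index}).$$
An induction on $|A|+|B|$ shows that $v_{A,B}$ is nonzero and belongs to $V^{((I\setminus A)\cup B)}\cap V_{\alpha+\sum A-\sum B}$. At the inductive step that appends a factor $\psi_{-a'}$ with $a'\in I\setminus A$, the current vector $v_{A,B}$ still sits in the $\lambda$-eigenspace of $\psi_{a'}\psi_{-a'}$, so the identity $\psi_{a'}(\psi_{-a'}v_{A,B})=\psi_{a'}\psi_{-a'}v_{A,B}=\lambda v_{A,B}\neq 0$ forces $\psi_{-a'}v_{A,B}\neq 0$; the case of appending $\psi_{b'}$ with $b'\in(\bN\setminus I)\setminus B$ is analogous. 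Because the map $(A,B)\mapsto (I\setminus A)\cup B$ is injective, vectors $v_{A,B}$ for distinct $(A,B)$ lie in pairwise distinct joint eigenspaces $V^{(I')}$ and are therefore linearly independent.

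Writing $p_{\mathrm{dist}}^{S}(w)$ for the number of finite subsets of $S\subseteq\bN$ summing to $w$, the previous paragraph yields
$$\dim V_{\alpha+w}\;\geq\;p_{\mathrm{dist}}^{I}(w)\qquad\text{and}\qquad\dim V_{\alpha-w}\;\geq\;p_{\mathrm{dist}}^{\bN\setminus I}(w)$$
by taking $B=\emptyset$ and $A=\emptyset$ respectively. Since $I$ and $\bN\setminus I$ partition $\bN$, at least one of them has positive upper density in $\bN$; for any $S\subseteq\bN$ of positive upper density the $2^{|S\cap[1,n]|}$ subsets of $S\cap[1,n]$ all have sums in $[0,n^2]$, so a pigeon-hole argument gives $\sup_w p_{\mathrm{dist}}^{S}(w)=\infty$. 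This forces $\sup_\beta\dim V_\beta=\infty$, contradicting the boundedness of $V$, and hence $\lambda=0$, i.e., $zV=0$.

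The step I expect to be most delicate is the inductive verification in the second paragraph that $v_{A,B}$ is nonzero with the claimed joint eigenvalue label: one must track how each appended factor $\psi_{\pm k}$ toggles the membership of $k$ in the running ``$I$-type.'' The operator $\psi_0$ (present for $\delta=0$), which satisfies $\psi_0^2=\lambda/2$ and commutes with every $\psi_j\psi_{-j}$ for $j\in\bN$, preserves each $V^{(I)}$ and plays no role in the counting. Everything else---the semisimplicity of the $\psi_j\psi_{-j}$, the reformulation of the eigenspace condition, and the concluding density/pigeonhole estimate---is entirely standard.
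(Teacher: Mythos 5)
Your proof is correct, and it follows the same overall strategy as the paper's: diagonalise the fermionic number operators on a weight space, generate a Fock-type $F$-module from a joint eigenvector, and show that its $L_0$-weight multiplicities are unbounded by counting subsets with a prescribed sum. The differences are in the execution. The paper first proves a structural result (Lemma \ref{weightF}) identifying $U(F)v$ with the explicitly constructed module $\mathscr V_I$, and then derives unboundedness by a three-case ``doubling'' argument: assuming a weight space of maximal dimension $d$, it exhibits $2d$ independent vectors in another weight space. You instead rebuild the relevant structure by hand (the joint eigenspaces $V^{(I)}$, the reformulation $\psi_j v=0$ vs.\ $\psi_{-j}v=0$, and the vectors $v_{A,B}$ with their eigenvalue labels -- all of which check out, including the nonvanishing induction) and close with a cleaner and more robust combinatorial step: one of $I$, $\N\setminus I$ has upper density at least $\tfrac12$, and a pigeonhole count of subset sums of $S\cap[1,n]$ forces some multiplicity to exceed $2^{n/2}/(n(n+1)/2+1)\to\infty$. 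Your density hypothesis is genuinely needed (an infinite but sparse $S$ such as powers of $10$ has all subset sums distinct), and you correctly get it from the partition of $\N$. One small point worth making explicit: the simultaneous diagonalisation $V=\bigoplus_I V^{(I)}$ of infinitely many commuting semisimple operators should be justified by restricting to the finite-dimensional weight spaces $V_\alpha$, which is exactly where the boundedness (or Harish-Chandra) hypothesis enters -- the paper sidesteps this by extracting only a single common eigenvector from one $V_\lambda$. Net effect: same skeleton, but your endgame avoids the paper's case analysis at the cost of a slightly heavier setup.
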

\begin{proof}
Since $V$ is simple, $z$ acts on $V$ as a scalar. Suppose that $zV\neq 0$ and, without loss of generality, assume that $z$ acts on $V$ as the identity map.

Let $\lambda\in\supp(V)$. Then $\dim V_\lambda<\infty$ and $\psi_{-k}\psi_{k}\cdot V_\lambda\subset V_\lambda$ for all $k\in\Z_+$. So $V_\lambda$ contains a nonzero homogeneous common eigenvector $v$ of $\psi_{-k}\psi_k,k\in\Z_+$. By Lemma \ref{weightF}, $U(F)v$ is isomorphic to $\mathscr V_I$ for some $I\subset\Z_+$ and $v$ is corresponding to $\xi_I$. Identify elements in $\mathscr V_I$ with that in $U(F)v$. For any $J\in S_I$, $\xi_J\in V_\mu$, where
$$\mu=\lambda+\sum_{k\in J\setminus I}k-\sum_{l\in I\setminus J}l.$$
Then $U(F)v$ is a weight $\C L_0$-module.

We will now prove that $U(F)v$ is not bounded, which will lead to a contradiction and the lemma follows. Suppose $U(F)v$ is bounded with maximal weight space dimension $d$. Let $\mu\in\supp(V)$ with $\dim(U(F)v)_\mu=d$. Then $(U(F)v)_\mu$ has a basis $\xi_{J_1},\dots,\xi_{J_d}$ satisfying
$$\mu=\lambda+\sum_{k\in J_i\setminus I}k-\sum_{l\in I\setminus J_i}l.$$
Let $m\in\N$ such that $J_i\setminus\{0,\dots,m\}$ are the same for all $i\in\{1,\dots,d\}$.

If $I\subset\{0,\dots,m\}$, then $\xi_{J_i\cup\{m+1,m+2\}},\xi_{J_i\cup\{2m+3\}}\in(U(F)v)_{\mu+2m+3}$ for all $i\in\{1,\dots,d\}$. So
$$\dim(U(F)v)_{\mu+2m+3}\geqslant 2d,$$
which is a contradiction. Assume that $I\not\subset\{0,\dots,m\}$ and, without loss of generality, that $m+1\in I$.

\textbf{Case 1}. There is $n\in\N\setminus\{0,\dots,m+1\}$ such that $n\in I$ and $m+1+n\in I$.

Then $\xi_{J_i\setminus\{m+1,n\}},\xi_{J_i\setminus\{m+1+n\}}\in(U(F)v)_{\mu-m-1-n}$ for all $i\in\{1,\dots,d\}$. So
$$\dim(U(F)v)_{\mu-m-1-n}\geqslant 2d,$$
which is a contradiction.

\textbf{Case 2}. There is $n\in\N\setminus\{0,\dots,m+1\}$ such that $n\notin I$ and $m+1+n\in I$.

Then $\xi_{J_i\setminus\{m+1\}},\xi_{(J_i\setminus\{m+1+n\})\cup\{n\}}\in(U(F)v)_{\mu-m-1}$ for all $i\in\{1,\dots,d\}$. So
$$\dim(U(F)v)_{\mu-m-1}\geqslant 2d,$$
which is a contradiction.

\textbf{Case 3}. For any $n\in\N\setminus\{0,\dots,m+1\}$ we have $m+1+n\notin I$.

Then $(\N\setminus\{0,\dots,2m+2\})\cap I=\vn$, i.e., $I\subset\{0,\dots,2m+2\}$. As the proof above, this also leads to a contradiction.

Thus, $U(F)v$ is not bounded.
\end{proof}

Now we recall the following results from \cite{CLW, DCL}.

\begin{lemma}\label{bounded2}
Suppose $V$ is a simple bounded $\S$-module  of level $0$. Then $FV=0$ and $V$ is a simple $\Vir$-module of the intermediate series.
\end{lemma}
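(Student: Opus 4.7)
The plan is to first establish the stronger statement $FV=0$ and then invoke the classical classification of simple bounded Virasoro modules. Since the level is zero, the defining relation $[\psi_m,\psi_n]=\delta_{m,-n}z$ collapses on $V$ to $\psi_m\psi_n+\psi_n\psi_m=0$, so the fermionic operators pairwise anti-commute on $V$ and $\psi_n^2=0$ for every $n\in\Z$; thus $F$ acts on $V$ through an exterior algebra.

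First I would introduce the common annihilator
$$W=\Ann_V(F)=\{v\in V:\psi_n v=0\text{ for all }n\in\Z\}$$
and verify that it is an $\S$-submodule. Closure under $F$ follows from anti-commutativity: if $v\in W$ then $\psi_n(\psi_m v)=-\psi_m\psi_n v=0$. Closure under $\Vir$ uses $[L_m,\psi_n]=-(n+m/2)\psi_{m+n}$, giving $\psi_n L_m v=L_m\psi_n v+(n+m/2)\psi_{m+n}v=0$ whenever $v\in W$. By simplicity, $W$ equals either $0$ or $V$; in the latter case $FV=0$ and the first assertion of the lemma holds, so the task reduces to ruling out $W=0$.

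The main obstacle will be showing $W\neq 0$. My approach combines Lemma \ref{lemrF0}(1) with boundedness. For any nonzero weight vector $v\in V_\alpha$ and any $m\in\Z_+$, $U(F_{[m]})v$ is a finite-dimensional $F_{[m]}$-module on which $z$ acts as $0$; by Lemma \ref{lemrF0}(1) every simple $F_{[m]}$-subquotient is one-dimensional and trivial, so $F_{[m]}$ acts by nilpotent operators and $\Ann_{U(F_{[m]})v}(F_{[m]})$ is a nonzero weight-graded subspace. Consequently the weight-graded spaces $A_m:=\Ann_V(F_{[m]})$ form a decreasing chain of nonzero subspaces of $V$ with $\dim(A_m\cap V_\beta)\leq d$ for each $\beta$, where $d$ bounds the weight-space dimensions of $V$. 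The hard step is to show that $\bigcap_m A_m=W$ is nonzero: this is carried out by a careful finite-dimensional stabilization argument on fixed weight spaces, in which the $\Vir$-action and the simplicity of $V$ are used to transport nonzero socle elements of $A_m$ across different weight spaces so that a common fixed weight can be selected.

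Once $FV=0$ is established, the $\S$-action on $V$ factors through $\S/(F\oplus\C z)\cong\Vir$, so $V$ becomes a simple bounded weight $\Vir$-module with finite-dimensional weight spaces. By the classical classification theorem of Mathieu (building on Kaplansky--Santharoubane), any such module is a module of the intermediate series, concluding the proof.
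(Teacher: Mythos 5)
First, a point of comparison: the paper does not prove this lemma at all --- it is stated as a recollection of results from \cite{CLW,DCL} --- so there is no internal proof to measure your attempt against, and you are trying to supply something the paper delegates to references. Judged on its own terms, your argument has a genuine gap at exactly the step you label ``the hard step''. Everything before that is fine: at level $0$ the operators $\psi_n$ anti-commute on $V$, the common annihilator $W=\Ann_V(F)$ is an $\S$-submodule, and each $A_m=\Ann_V(F_{[m]})$ is a nonzero weight-graded subspace (an Engel-type argument from Lemma \ref{lemrF0}(1)). But the crucial claim $\bigcap_m A_m\neq 0$ does not follow from the $A_m$ being a decreasing chain of nonzero subspaces, and the sentence promising ``a careful finite-dimensional stabilization argument'' is a placeholder rather than a proof. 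The danger is concrete: for the infinite Grassmann algebra $\Lambda(\xi_0,\xi_1,\dots)$ acting on itself by multiplication, every finite annihilator $\Ann(\xi_0,\dots,\xi_m)=\xi_0\cdots\xi_m\Lambda$ is nonzero, yet the common annihilator of all the $\xi_i$ is zero. Your observation that $\dim(A_m\cap V_\beta)\leq d$ for each fixed $\beta$ does not repair this, because the weight at which $A_m$ survives can drift with $m$ (the annihilator inside $U(F_{[m]})v$ naturally sits at a weight shifted from that of $v$ by an amount growing with $m$). So the heart of the lemma --- that $F$ must act trivially on a simple bounded level-zero module --- is not established. A correct proof must use boundedness and the $\Vir$-action in an essential, quantitative way (compare the dimension double-counting in the proof of Lemma \ref{bounded1}, which however relies on the level being nonzero), or else simply appeal to the classification in \cite{CLW,DCL} as the paper does.

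The remaining pieces are sound: $W$ is indeed a submodule, so by simplicity $FV=0$ is equivalent to $W\neq 0$; and once $FV=0$ is known, the action factors through $\Vir$ and Mathieu's classification gives the intermediate-series conclusion.
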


We are ready to state the classification for simple Harish-Chandra $\S$-modules.
\begin{theorem}\label{HC1}
Any simple Harish-Chandra $\S$-module is isomorphic to one of the following $\S$-modules.
\begin{itemize}\item[(1)]
$V^\S$, where $V$ is a simple Harish-Chandra $\Vir$-module and $ FV=0$;
\item[(2)]   $\mathscr V^\lambda\otimes V^\S$, where $\lambda\in\C^*$ and $V$ is a simple highest weight $\Vir$-module, and  $\mathscr V^{ \lambda}$ is considered as an $\S$-module;
\item[(3)]  the  restricted dual of the module in \rm{(2)}.
\end{itemize}
\end{theorem}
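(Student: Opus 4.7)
The plan is to apply Lemma \ref{HC} to split a simple Harish-Chandra $\S$-module $M$ into three exhaustive cases (highest weight, lowest weight, or bounded), and then dispatch each case using the structural results already at our disposal, with a restricted-dual argument covering the lowest weight case.

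In the bounded case, Lemma \ref{bounded1} forces $zM=0$, and then Lemma \ref{bounded2} yields $FM=0$ together with the identification of $M$ as a simple $\Vir$-module of the intermediate series. Viewing $M$ as the trivially-extended $\Vir$-module, this places $M$ in class (1). In the highest weight case, Theorem \ref{hwm2} does the work directly: if $zM=0$ then $FM=0$ and $M\cong V^\S$ for a simple highest weight $\Vir$-module $V$, which is class (1); if $z$ acts as $\lambda\in\C^*$ then $M\cong\mathscr V^{\sqrt\lambda}\otimes V^\S$ for a simple highest weight $\Vir$-module $V$, which is class (2). For (2) one still must check the Harish-Chandra property: the module $\mathscr V$ is itself a highest weight $\Vir$-module with finite-dimensional weight spaces, since each $L_0$-weight space is spanned by the finitely many monomials $\xi_{i_1}\cdots\xi_{i_k}$ of a prescribed total degree; a tensor product of two highest-weight Harish-Chandra $\Vir$-modules with uniformly bounded-below support inherits finite-dimensional weight spaces, and hence so does $M$.

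For the lowest weight case, form the restricted dual $M^\ast=\bigoplus_\alpha M_\alpha^\ast$. Since the weight spaces of $M$ are finite-dimensional, $M^\ast$ is again a simple Harish-Chandra $\S$-module, of the same level, but now a highest weight module. Applying the previous paragraph to $M^\ast$, either $M^\ast\cong V^\S$ with $FV=0$, in which case $M$ is the restricted dual of a trivially-extended highest weight $\Vir$-module, hence itself of the form $(V')^\S$ for a simple lowest weight $\Vir$-module $V'$ and $FM=0$ — class (1); or $M^\ast\cong\mathscr V^{\sqrt\lambda}\otimes V^\S$, so that $M$ is exactly the restricted dual of a module of class (2), that is, class (3). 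Disjointness of the three classes follows on inspecting the action of $z$ together with the boundedness/unboundedness of the support.

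The principal obstacle is the lowest weight case, because Theorem \ref{hwm2} is stated for highest weight modules only. One must verify that the restricted-dual functor preserves simplicity and the Harish-Chandra property, and converts a lowest weight $\S$-structure into a highest weight one; alternatively, one may transport Theorem \ref{hwm2} directly via the obvious anti-automorphism $L_m\mapsto L_{-m}$, $\psi_m\mapsto\psi_{-m}$, $z\mapsto z$, $c\mapsto c$ of $\S$ (adjusted for the super-bracket sign), which exchanges $\S_+$ and $\S_-$. Either approach is routine once finite-dimensionality of the weight spaces is in hand, and completes the classification.
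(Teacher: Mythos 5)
Your proposal is correct and follows essentially the same route as the paper, whose proof simply cites Lemma \ref{HC}, Lemma \ref{bounded1}, Lemma \ref{bounded2} and Theorem \ref{hwm2}; you have merely filled in the details of how these combine (the trichotomy, the bounded case landing in class (1), the highest weight case landing in (1) or (2), and the lowest weight case handled by passing to the restricted dual). The extra checks you supply --- finite-dimensionality of weight spaces of $\mathscr V^\lambda\otimes V^\S$ and the behaviour of the restricted-dual functor --- are exactly the routine verifications the paper leaves implicit.
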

\begin{proof}
This follows from Lemma \ref{HC}, Lemma \ref{bounded1}, Lemma \ref{bounded2}   and Theorem \ref{hwm2}.
\end{proof}

 We have similar results for the Fermion-Virasoro  algebra $\S(\frac{1}{2})$.

\begin{theorem}\label{HC2}
Any simple Harish-Chandra $\S(\frac{1}{2})$-module is isomorphic to one of the following $\S(\frac{1}{2})$-modules.
\begin{itemize}\item[(1)]
$V^{\S(\frac{1}{2})}$, where $V$ is a simple Harish-Chandra $\Vir$-module and $ F(\frac{1}{2})V=0$;
\item[(2)]   $\mathscr V(\frac{1}{2})^\lambda\otimes V^{\S(\frac{1}{2})}$, where $\lambda\in\C^*$ and $V$ is a simple highest weight $\Vir$-module, and  $\mathscr V(\frac{1}{2})^{ \lambda}$ is considered as an $\S(\frac{1}{2})$-module;
\item[(3)]  the  restricted dual of the module in \rm{(2)}.
\end{itemize}
\end{theorem}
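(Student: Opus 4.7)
The plan is to mimic the classification for $\S$ carried out in Theorem \ref{HC1}, substituting $\S(\tfrac{1}{2})$, $F(\tfrac{1}{2})$ and $\mathscr V(\tfrac{1}{2})$ for $\S$, $F$ and $\mathscr V$ throughout. First I would establish the Harish-Chandra trichotomy, the direct analog of Lemma \ref{HC}: a simple Harish-Chandra $\S(\tfrac{1}{2})$-module $M$ is either a highest weight module, a lowest weight module, or a bounded module. The argument is identical to the one in Lemma \ref{HC} because it only uses the Virasoro action on $M$ together with the bracket $[L_{k+j},\psi_k]=-\tfrac{1}{2}(3k+j)\psi_{2k+j}$, which holds in $\S(\tfrac{1}{2})$ as well; the absence of $\psi_0$ is actually a simplification rather than an obstruction.

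Next I would handle the bounded case in two steps. The first is an analog of Lemma \ref{bounded1}: every simple bounded $\S(\tfrac{1}{2})$-module $V$ satisfies $zV=0$. The proof strategy carries over literally once one has the analog of Lemma \ref{weightF} for $F(\tfrac{1}{2})$, which can be formulated and proved in exactly the same way (actually more cleanly, since $\psi_0$ is absent and the indexing set is $\Z_+ + \tfrac{1}{2}$). If $z$ acted as a nonzero scalar, one would pick a common eigenvector $v$ of the commuting operators $\psi_{-k}\psi_k$, $k \in \Z_+ + \tfrac{1}{2}$, realize $U(F(\tfrac{1}{2}))v$ as a module of type $\mathscr V_I$, and derive an unboundedness contradiction by the same case analysis (Cases 1--3 in Lemma \ref{bounded1}) showing that one can always find a weight of multiplicity exceeding any prescribed bound $d$. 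The second step is to invoke the classification from \cite{DCL}, which gives that a simple bounded $\S(\tfrac{1}{2})$-module of level zero must satisfy $F(\tfrac{1}{2})V=0$ and be a $\Vir$-module of the intermediate series; together these yield option (1) of the theorem.

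For the highest weight case I would prove the exact analog of Theorem \ref{hwm2} for $\S(\tfrac{1}{2})$: if $z$ acts as $\lambda \in \C^*$ then $V \cong \mathscr V(\tfrac{1}{2})^{\sqrt\lambda} \otimes M^{\S(\tfrac{1}{2})}$ for some simple highest weight $\Vir$-module $M$, and if $zV=0$ then $F(\tfrac{1}{2})V=0$ and $V$ is a simple highest weight $\Vir$-module. The nonzero level half follows at once from Theorem \ref{smooth2}, since highest weight modules are smooth. The $zV=0$ half is the PBW/support argument from the proof of Theorem \ref{hwm2}(2), adapted by taking a highest weight vector $v$ and noting that $F(\tfrac{1}{2})\cdot V \subset U(\Vir_-)U(F(\tfrac{1}{2})_-)F(\tfrac{1}{2})_- v$, which does not contain $v$. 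Lowest weight modules are handled by duality, yielding option (3).

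Assembling the three cases produces (1), (2), (3). The main obstacle I anticipate is the verbatim transcription of the combinatorial argument in Lemma \ref{bounded1}: one needs to check that the three cases still cover all possibilities when the index set is $\Z_+ + \tfrac{1}{2}$ rather than $\Z_+$, and that the auxiliary fermionic vectors $\xi_{J_i \cup \{m+\tfrac{1}{2}, n\}}$ and the like produce the required multiplicity doubling. Apart from that bookkeeping step, the proof is essentially the $\delta = \tfrac{1}{2}$ version of the arguments already assembled in the paper, so the final statement reduces to citing Lemma \ref{HC} (adapted), the analog of Lemma \ref{bounded1}, the result from \cite{DCL}, and the analog of Theorem \ref{hwm2}.
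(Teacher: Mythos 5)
Your proposal matches the paper's own treatment: the paper gives no separate proof of Theorem \ref{HC2}, relying instead on its blanket remark that all arguments for $\S=\S(0)$ carry over to $\S(\frac{1}{2})$, i.e.\ exactly the $\delta=\frac12$ adaptations of Lemma \ref{HC}, Lemma \ref{bounded1}, Lemma \ref{bounded2} (via \cite{DCL}) and Theorem \ref{hwm2} that you describe. Your attention to the half-integer reindexing in the combinatorial step of Lemma \ref{bounded1} is a reasonable precaution, but it is the same route the authors take.
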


Note that the modules in (2) and (3) of Theorems \ref{HC1}, \ref{HC2} do not occur in the classifications for the algebras in \cite{CLW,DCL}.

\section{free $\C[L_0]$-modules over Fermion-Virasoro algebras}

Our purpose in this section is to construct modules over $\S(0)$ and $\S(\frac12)$ that are free $\C[L_0]$-modules by restriction.

For $\lambda\in\C^*,a\in\C$, define the free $\C[L_0]$-module $\Omega(\lambda,a):=\C[x]$ of rank $1$ over $\Vir$ by
$$c\cdot f(x)=0,\ L_m\cdot f(x)=\lambda^m(x+ma)f(x+m),\ m\in\Z,f(x)\in\C[x].$$
We have the following lemma.

\begin{lemma}\cite[Theorem 3]{TZ}\label{Ola}
Any $\Vir$-module that is a free $\C[L_0]$-module of rank $1$   is isomorphic to $\Omega(\lambda,a)$ for some $\lambda\in\C^*$ and $a\in\C$.
\end{lemma}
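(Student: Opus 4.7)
The plan is to let $M$ be a $\Vir$-module that is free of rank one over $\C[L_0]$, pick a generator $v\in M$, and identify $M$ with $\C[x]$ via $L_0^kv\leftrightarrow x^k$, so that $L_0$ acts as multiplication by $x$ and $v=1$. For each $m\in\Z$ set $P_m(x):=L_m\cdot 1\in\C[x]$. Iterating $[L_m,L_0]=mL_m$ gives $L_mq(L_0)=q(L_0+m)L_m$, so the action on $M$ is
\begin{equation*}
L_m\cdot q(x)=q(x+m)\,P_m(x),\qquad q(x)\in\C[x].
\end{equation*}
Since $c$ is central and $M$ is free of rank one over $\C[L_0]$, $c$ acts as multiplication by some $f(x)\in\C[x]$; comparing $cL_m\cdot v$ with $L_mc\cdot v$ yields $f(x+m)P_m(x)=f(x)P_m(x)$, so $f$ is a constant $\mu\in\C$ as soon as some $P_m$ is nonzero. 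The goal is then to show that $P_m(x)=\lambda^m(x+ma)$ for some $\lambda\in\C^*$, $a\in\C$, and that $\mu=0$.

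Next, apply $[L_m,L_{-m}]=2mL_0+\frac{m^3-m}{12}c$ to $v$ to obtain
\begin{equation*}
P_{-m}(x+m)P_m(x)-P_m(x-m)P_{-m}(x)=2mx+\tfrac{m^3-m}{12}\mu.
\end{equation*}
If $P_m=0$ for some $m$, the left-hand side vanishes while for large $|m|$ the right-hand side does not, a contradiction; so every $P_m$ is nonzero. Expanding the leading two coefficients in the displayed identity shows $\deg P_m+\deg P_{-m}=2$ for every $m\geqslant 1$, and in particular $\deg P_1+\deg P_{-1}=2$. To rule out $\deg P_1=0$: if $P_1=A\in\C^*$ were constant, then $[L_m,L_1]=(m-1)L_{m+1}$ applied to $v$ gives $A[P_m(x)-P_m(x+1)]=(m-1)P_{m+1}(x)$ for $m\neq 1$, so $\deg P_{m+1}=\deg P_m-1$ for $m\geqslant 2$, eventually yielding a vanishing $P_m$. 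The case $\deg P_1=2$ (so $\deg P_{-1}=0$) is excluded symmetrically via $[L_m,L_{-1}]$. Hence $\deg P_1=\deg P_{-1}=1$, and induction on $|m|$ via the same recursions then propagates $\deg P_m=1$ to all $m$.

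Finally, write $P_m(x)=A_m(x+B_m)$ with $A_m\in\C^*$, substitute into the relations from $[L_m,L_{\pm1}]$, and read off that $A_{m+1}=A_1A_m$ and $A_{m-1}=A_{-1}A_m$; setting $\lambda:=A_1$ gives $A_m=\lambda^m$, and in particular $A_1A_{-1}=1$. The parallel recursions for the $B_m$ solve to $B_m=ma$ with $a:=B_1$. Substituting $P_m=\lambda^m(x+ma)$ back into the $[L_m,L_{-m}]$-identity shows that its left-hand side is exactly $2mx$, forcing $\tfrac{m^3-m}{12}\mu=0$ for all $m$ and hence $\mu=0$. Thus $c$ acts trivially and $L_m\cdot q(x)=\lambda^m(x+ma)q(x+m)$, so $M\cong\Omega(\lambda,a)$ as claimed. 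The main obstacle is the degree step: the contradiction arguments eliminating $\deg P_1\in\{0,2\}$ require careful bookkeeping of cancellations among the leading coefficients of the commutator identities, while the remaining linear recursions in the two parameters $\lambda,a$ are routine.
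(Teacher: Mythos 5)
Your proof is correct, but note that the paper itself does not prove this lemma at all: it is imported verbatim as \cite[Theorem 3]{TZ}, so there is no internal argument to compare against. Your reconstruction is a sound, self-contained verification, and it is very much in the spirit of the computations the authors do carry out in Section 6 for the rank-two super case (the polynomials $f_m$, the degree bound, the root-shifting identity of Lemma \ref{roots} are the exact analogues of your $P_m$, your relation $\deg P_m+\deg P_{-m}=2$, and your leading-coefficient bookkeeping). Two small points of hygiene: the right-hand side $2mx+\frac{m^3-m}{12}\mu$ is a nonzero polynomial for \emph{every} $m\neq 0$ (its linear coefficient is $2m$), so the phrase ``for large $|m|$'' is unnecessary; and the inductive propagation of $\deg P_m=1$ needs a word of care at $m=2$, since $[L_1,L_1]=0$ gives no information about $P_2$ --- one must instead use $[L_2,L_{-1}]=3L_1$, whose subleading coefficient $a_2a_{-1}(2+d_2)$ cannot vanish, to read off $d_2=d_1=1$, and similarly combine the upward and downward recursions (together with $P_0(x)=x$, hence $A_0=1$, $B_0=0$) to pin down $A_2,B_2$ and hence all $A_m=\lambda^m$, $B_m=ma$. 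With those details filled in, the argument, including the forced vanishing of the central charge, is complete.
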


For $\lambda\in\C^*$ and $a,b\in\C$, let $\Omega_{\S(\delta)}(\lambda,a,b):=\C[x_0]\oplus\C[x_1]$ with $\Omega_{\S(\delta)}(\lambda,a,b)_{\bar 0}=\C[x_0]$ and $\Omega_{\S(\delta)}(\lambda,a,b)_{\bar 1}=\C[x_1]$. Fix some $\sqrt\lambda\in\C^*$ with $(\sqrt\lambda)^2=\lambda$, and set $\lambda^r=\lambda^{r-\frac{1}{2}}\sqrt\lambda$ for any $r\in\frac12+\Z$.
Define an $\S( \delta)$-action on $\Omega_{\S(\delta)}(\lambda,a,b)$ as follows:
\begin{equation*}\begin{split}
		&L_m\cdot f(x_0)=\lambda^m(x_0+ma)f(x_0+m),\\
		&L_m\cdot g(x_1)=\lambda^m(x_1-\frac{m}{2}+ma)g(x_1+m),\\
		&\psi_r\cdot f(x_0)=b\lambda^rf(x_1+r),\\
		&\psi_r\cdot g(x_1)=0,\\
		&c\cdot\Omega_{\S(\delta)}(\lambda,a,b)=z\cdot\Omega_{\S(\delta)}(\lambda,a,b)=0,
\end{split}\end{equation*}
for any $m\in\Z, r\in\delta+\Z$ and any $f(x_0)\in\C[x_0],g(x_1)\in\C[x_1]$.

For $\lambda\in\C^*$ and $a,b\in\C$, let $\Omega'_{\S(\delta)}(\lambda,a,b):=\C[y_0]\oplus\C[y_1]$ with $\Omega'_{\S(\delta)}(\lambda,a,b)_{\bar 0}=\C[y_0]$ and $\Omega'_{\S(\delta)}(\lambda,a,b)_{\bar 1}=\C[y_1]$. Fix some $\sqrt\lambda\in\C^*$ with $(\sqrt\lambda)^2=\lambda$, and set $\lambda^r=\lambda^{r-\frac{1}{2}}\sqrt\lambda$ for any $r\in\frac 12+\Z$.
Define an $\S(\delta)$-action on $\Omega'_{\S(\delta)}(\lambda,a,b)$ as follows:
\begin{equation*}\begin{split}
		&L_m\cdot f(y_0)=\lambda^m(y_0+ma)f(y_0+m),\\
		&L_m\cdot g(y_1)=\lambda^m(y_1-\frac{3m}{2}+ma)g(y_1+m),\\
	\end{split}\end{equation*}
\begin{equation*}\begin{split}
		&\psi_r\cdot f(y_0)=b\lambda^r(y_1-2ra+3r)f(y_1+r),\\
		&\psi_r\cdot g(y_1)=0,\\
		&c\cdot\Omega'_{\S(\delta)}(\lambda,a,b)=z\cdot\Omega'_{\S(\delta)}(\lambda,a,b)=0
\end{split}\end{equation*}
for any $m\in\Z, r\in\delta+\Z$ and any $f(y_0)\in\C[y_0],g(y_1)\in\C[y_1]$.

For $\lambda\in\C^*$ and $b\in\C$, let $\Omega''_{\S(\delta)}(\lambda,b):=\C[z_0]\oplus\C[z_1]$ with $\Omega''_{\S(\delta)}(\lambda,b)_{\bar 0}=\C[z_0]$ and $\Omega''_{\S(\delta)}(\lambda,b)_{\bar 1}=\C[z_1]$. Fix some $\sqrt\lambda\in\C^*$ with $(\sqrt\lambda)^2=\lambda$, and set $\lambda^r=\lambda^{r-\frac{1}{2}}\sqrt\lambda$ for any $r\in\frac 12+\Z$. Define an $\S(\delta)$-action on $\Omega''_{\S(\delta)}(\lambda,b)$ as follows:
\begin{equation*}\begin{split}
&L_m\cdot f(z_0)=\lambda^m(z_0+m)f(z_0+m),\\
&L_m\cdot g(z_1)=\lambda^m(z_1-\frac 32m)g(z_1+m),\\
&\psi_r\cdot f(z_0)=b\lambda^r(z_1+3r)(z_1+r)f(z_1+r),\\
&\psi_r\cdot g(z_1)=0,\\
&c\cdot \Omega''_{\S(\delta)}(\lambda,b)=z\cdot \Omega''_{\S(\delta)}(\lambda,b)=0
\end{split}\end{equation*}
for any $m\in\Z, r\in\delta+\Z$ and any $f(z_0)\in\C[z_0],g(z_1)\in\C[z_1]$.

For $\lambda\in\C^*$ and $b\in\C$, let $\tilde\Omega''_{\S(\delta)}(\lambda,b):=\C[\tilde z_0]\oplus\C[\tilde z_1]$ with $\tilde\Omega''_{\S(\delta)}(\lambda,b)_{\bar 0}=\C[\tilde z_0]$ and $\tilde\Omega''_{\S(\delta)}(\lambda,b)_{\bar 1}=\C[\tilde z_1]$.Fix some $\sqrt\lambda\in\C^*$ with $(\sqrt\lambda)^2=\lambda$, and set $\lambda^r=\lambda^{r-\frac{1}{2}}\sqrt\lambda$ for any $r\in\frac 12+\Z$. Define an $\S(\delta)$-action on $\tilde\Omega''_{\S(\delta)}(\lambda,b)$ as follows:
\begin{equation*}\begin{split}
&L_m\cdot f(\tilde z_0)=\lambda^m(\tilde z_0+\frac 52m)f(\tilde z_0+m),\\
&L_m\cdot g(\tilde z_1)=\lambda^m\tilde z_1g(\tilde z_1+m),\\
&\psi_r\cdot f(\tilde z_0)=b\lambda^r\tilde z_1(\tilde z_1-2r)f(\tilde z_1+r),\\
&\psi_r\cdot g(\tilde z_1)=0,\\
&c\cdot \tilde\Omega''_{\S(\delta)}(\lambda,b)=z\cdot \tilde\Omega''_{\S(\delta)}(\lambda,b)=0
\end{split}\end{equation*}
for any $m\in\Z, r\in\delta+\Z$ and any $f(\tilde z_0)\in\C[\tilde z_0],g(\tilde z_1)\in\C[\tilde z_1]$.

\begin{lemma}
	$\Omega_{\S(\delta)}(\lambda,a,b),\Omega'_{\S(\delta)}(\lambda,a,b),\Omega''_{\S(\delta)}(\lambda,b)$ and $\tilde\Omega''_{\S(\delta)}(\lambda,b)$ are $\S(\delta )$-modules.
\end{lemma}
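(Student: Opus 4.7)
The plan is to verify that each of the four proposed actions satisfies the defining relations of $\S(\delta)$ from Definition \ref{def.3}. Since $c$ and $z$ act as zero on all four modules, it suffices to check three families of identities: (i) the Virasoro bracket $[L_m,L_n]=(m-n)L_{m+n}$, (ii) the mixed bracket $[L_m,\psi_r]=(-r-\tfrac{m}{2})\psi_{m+r}$, and (iii) the odd anticommutator $\psi_r\psi_s+\psi_s\psi_r=0$ (because $\delta_{r,-s}z$ vanishes). The argument is uniform for $\delta=0$ and $\delta=\tfrac12$; only the index set $\Z+\delta$ of the $\psi_r$'s differs.

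For (i), the formula for $L_m$ preserves each parity component. On each of $\C[x_0]$ and $\C[x_1]$, and similarly on the $y$-, $z$-, $\tilde z$-components, the $L_m$-action is of the shape $L_m\cdot h(u)=\lambda^m(u+ma')h(u+m)$ for a constant $a'$ depending on the component (for example, $a'=a$ on $\C[x_0]$ and $a'=a-\tfrac12$ on $\C[x_1]$ for $\Omega_{\S(\delta)}(\lambda,a,b)$; $a'=\tfrac52$ on $\C[\tilde z_0]$ and $a'=0$ on $\C[\tilde z_1]$ for $\tilde\Omega''_{\S(\delta)}(\lambda,b)$). Each such restriction coincides with the Virasoro module $\Omega(\lambda,a')$ of Lemma \ref{Ola}, so (i) holds on each parity piece and hence on the whole space.

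Identity (iii) is immediate from the definitions: $\psi_r$ sends the even component into the odd component and annihilates the odd component, so $\psi_r\psi_s$ and $\psi_s\psi_r$ both vanish on every element of the module.

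The only substantive verification is (ii), and it need only be checked on the even component, since $\psi_r$ kills the odd component so both sides of (ii) vanish there. Fixing one of the four modules and a test element $f$ of the even component, one computes $L_m\psi_r f$ and $\psi_r L_m f$ from the formulas, obtaining on both sides expressions of the form $b\lambda^{m+r}P(u_1)f(u_1+m+r)$ for certain polynomials $P$ in the odd variable $u_1$. The required relation becomes the polynomial identity
\[
P_{L\psi}(u_1)-P_{\psi L}(u_1)=\bigl(-r-\tfrac{m}{2}\bigr)P_{\psi_{m+r}}(u_1).
\]
For $\Omega_{\S(\delta)}(\lambda,a,b)$, $P$ is constant in $u_1$ and the check is a one-liner. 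For $\Omega'_{\S(\delta)}(\lambda,a,b)$ the two polynomials on the left are quadratic while the right is linear, and for $\Omega''_{\S(\delta)}(\lambda,b)$ and $\tilde\Omega''_{\S(\delta)}(\lambda,b)$ they are cubic while the right is quadratic; the specific factors $y_1-2ra+3r$, $(z_1+3r)(z_1+r)$, and $\tilde z_1(\tilde z_1-2r)$ built into the definition of $\psi_r$ are precisely what forces cancellation of all top-degree terms on the left and produces the correct structure constant on the right. The main (and essentially only) obstacle is the algebraic bookkeeping needed to match coefficients degree by degree; once these polynomial identities are verified, the lemma is complete.
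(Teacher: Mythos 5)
Your proposal is correct and takes essentially the same route as the paper: everything except the mixed relation $[L_m,\psi_r]=(-r-\frac{m}{2})\psi_{m+r}$ on the even component is dismissed for exactly the reasons you give (the even parts are copies of $\Omega(\lambda,a')$, and $\psi_r\psi_s$, as well as both sides of any bracket applied to the odd component, vanish because $\psi_r$ kills the odd part). The only difference is that the paper carries out the four remaining polynomial verifications explicitly, while you describe their structure and the source of the top-degree cancellations without writing them out; the cancellations do occur exactly as you claim.
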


	\begin{proof}
Clearly, both $\C[x_0]$ and $\C[y_0]$ are isomorphic to the $\Vir$-module $\Omega(\lambda,a)$ defined before Lemma \ref{Ola}. 
For any $m,n\in\delta+\Z,x\in\S(\delta)$ and any $f(x_0)\in\C[x_0],g(x_1)\in\C[x_1]$ we have
		$$(\psi_mx-(-1)^{|x|-1}x\psi_m)\cdot g(x_1)=0=[\psi_m,x]\cdot g(x_1),$$
		$$(\psi_m\psi_n+\psi_n\psi_m)\cdot f(x_0)=0=\delta_{m,-n}z\cdot f(x_0)=[\psi_m,\psi_n]\cdot f(x_0).$$
To prove that $\Omega_{\S(\delta)}(\lambda,a,b)$ is an $\S(\delta)$-module, it suffice to verify that
$$(L_m\psi_n-\psi_nL_m)\cdot f(x_0)=[L_m,\psi_n]\cdot f(x_0),\forall m\in\Z,n\in\delta+\Z,f(x_0)\in\C[x_0].$$
Similarly, to prove that $\Omega'_{\S(\delta)}(\lambda,a,b),\Omega''_{\S(\delta)}(\lambda,b)$ and $\tilde\Omega''_{\S(\delta)}(\lambda,b)$ are $\S(\delta)$-modules, it suffices to verify respectively that
$$(L_m\psi_n-\psi_nL_m)\cdot g(y_0)=[L_m,\psi_n]\cdot g(y_0),\forall m\in\Z,n\in\delta+\Z,g(y_0)\in\C[y_0],$$
$$(L_m\psi_n-\psi_nL_m)\cdot h(z_0)=[L_m,\psi_n]\cdot h(z_0),\forall m\in\Z,n\in\delta+\Z,h(z_0)\in\C[z_0],$$
$$(L_m\psi_n-\psi_nL_m)\cdot \tilde h(\tilde z_0)=[L_m,\psi_n]\cdot \tilde h(\tilde z_0),\forall m\in\Z,n\in\delta+\Z,\tilde h(\tilde z_0)\in\C[\tilde z_0].$$

Let $m\in\Z,n\in\delta+\Z$ and $f(x_0)\in\C[x_0],g(y_0)\in\C[y_0],h(z_0)\in\C[z_0],\tilde h(\tilde z_0)\in\C[\tilde z_0]$. Then we have
		\begin{equation*}\begin{split}
				(L_m\psi_n&-\psi_nL_m)\cdot f(x_0)\\
				=&L_m\cdot b\lambda^nf(x_1+n)-\psi_n\cdot\lambda^m(x_0+ma)f(x_0+m)\\
				=&b\lambda^{mn}(x_1-\frac{m}{2}+ma)f(x_1+m+n)-b\lambda^{m+n}(x_1+n+ma)f(x_1+n+m)\\
				=&b\lambda^{m+n}(-n-\frac{m}{2})f(x_1+m+n)\\
				=&[L_m,\psi_n]\cdot f(x_0),\\
                 (L_m\psi_n&-\psi_nL_m)\cdot g(y_0)\\
                =&L_m\cdot b\lambda^n(y_1-2na+3n)g(y_1+n)-\psi_n\cdot\lambda^m(y_0+ma)g(y_0+m)\\
                =&b\lambda^{m+n}(y_1-\frac{3m}{2}+ma)(y_1+m-2na+3n)g(y_1+m+n)\\
                &-b\lambda^{m+n}(y_1-2na+3n)(y_1+n+ma)g(y_1+n+m)\\
                =&(-n-\frac m2)b\lambda^{m+n}(y_1-(m+n)(2a-3))g(y_1+m+n)\\
                =&[L_m,\psi_n]\cdot g(y_0),\\
                 (L_m\psi_n&-\psi_nL_m)\cdot h(z_0)\\
                =&L_m\cdot b\lambda^n(z_1+3n)(z_1+n)h(z_1+n)-\psi_n\cdot\lambda^m(z_0+m)h(z_0+m)\\
                =&b\lambda^{m+n}(z_1-\frac 32m)(z_1+m+3n)(z_1+m+n)h(z_1+m+n)\\
                &-b\lambda^{m+n}(z_1+3n)(z_1+n)(z_1+m+n)h(z_1+m+n)\\
                =&(-n-\frac m2)b\lambda^{m+n}(z_1+3m+3n)(z_1+m+n)h(z_1+m+n)\\
                =&[L_m,\psi_n]\cdot h(z_0),\\
                 (L_m\psi_n&-\psi_nL_m)\cdot \tilde h(\tilde z_0)\\
                =&L_m\cdot b\lambda^n\tilde z_1(\tilde z_1-2n)\tilde h(\tilde z_1+n)-\psi_n\cdot \lambda^m(\tilde z_0+\frac 52m)\tilde h(\tilde z_0+m)\\
                =&b\lambda^{m+n}\tilde z_1(\tilde z_1+m)(\tilde z_1-2n+m)\tilde h(\tilde z_1+m+n)\\
                &-b\lambda^{m+n}\tilde z_1(\tilde z_1-2n)(\tilde z_1+n+\frac 52m)\tilde h(\tilde z_1+m+n)\\
                =&(-n-\frac m2)b\lambda^{m+n}\tilde z_1(\tilde z_1-2m-2n)\tilde h(\tilde z_1+m+n)\\
                =&[L_m,\psi_n]\cdot \tilde h(\tilde z_0).
		\end{split}\end{equation*}
So $\Omega_{\S(\delta)}(\lambda,a,b),\Omega'_{\S(\delta)}(\lambda,a,b),\Omega''_{\S(\delta)}(\lambda,b)$ and $\tilde\Omega''_{\S(\delta)}(\lambda,b)$ are $\S(\delta )$-modules.
%
%
\end{proof}

Let $V$ be an $\S(\delta)$-module such that $V_{\bar 0}$ and $V_{\bar 1}$ are free $\C[L_0]$-modules of rank $1$ by restriction, and that $\S(\delta)_{\bar 1}V\neq 0$. Next, we will show that $V$ is isomorphic to $\Omega_{\S(\delta)}(\lambda,a,b),\Omega'_{\S(\delta)}(\lambda,a,b),\Omega''_{\S(\delta)}(\lambda,b)$ or $\tilde\Omega''_{\S(\delta)}(\lambda,b)$ up to a parity-change for some $\lambda,b\in\C^*$ and some $a\in\C$.

Let $\1_0$ be the $\C[L_0]$-basis of $V_{\bar 0}$, and let $\1_1$ be the $\C[L_0]$-basis of $V_{\bar 1}$. Then $V=\C[L_0]\1_0\oplus\C[L_0]\1_1$.

From Lemma \ref{Ola} we know that  $c\cdot V_{\bar 0}=c\cdot V_{\bar 1}=0$, and there are $\lambda,\lambda'\in\C^*$ and $a,a'\in\C$ such that
$$\aligned
	&L_m\cdot f(L_0)\1_0=\lambda^m(L_0+ma)f(L_0+m)\1_0,\\
	&L_m\cdot f(L_0)\1_1=\lambda'^m(L_0+ma')f(L_0+m)\1_1,\endaligned$$
	for all $m\in\Z,f(L_0)\in\C[L_0].$

For any $m\in\delta+\Z$, let $f_m,g_m\in\C[L_0]$ be such that
$$\psi_m\cdot \1_0=f_m(L_0)\1_1,\psi_m\cdot\1_1=g_m(L_0)\1_0.$$
Using the fact that
$\psi_m  f(L_0)= f(L_0+m)\psi_m$ in $U(\S(\delta))$, we have the following formulas.
\begin{lemma}\label{f_m12} For any $m\in\delta+\Z,f(L_0)\in\C[L_0]$, we have
$$\aligned &\psi_m\cdot f(L_0)\1_0=f_m(L_0)f(L_0+m)\1_1,\\
&\psi_m\cdot f(L_0)\1_1=g_m(L_0)f(L_0+m)\1_0.\endaligned$$
\end{lemma}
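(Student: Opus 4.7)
The plan is to derive this lemma as an immediate consequence of the commutation relation $[L_0,\psi_m]=-m\psi_m$ obtained from Definition \ref{def.3} by setting the first index to $0$. First, I would translate this bracket into the operator identity $\psi_m L_0 = (L_0+m)\psi_m$ inside the universal enveloping algebra $U(\S(\delta))$. By a routine induction on $k\geq 0$, it then follows that $\psi_m L_0^k = (L_0+m)^k \psi_m$, and consequently $\psi_m f(L_0) = f(L_0+m)\psi_m$ for every polynomial $f(L_0)\in\C[L_0]$ — which is precisely the identity that the paper already cites just before the lemma statement.

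Next I would apply this to the generators $\1_0$ and $\1_1$. Acting on $\1_0$ gives
$$\psi_m\cdot f(L_0)\1_0 = f(L_0+m)\psi_m\cdot\1_0 = f(L_0+m) f_m(L_0)\1_1,$$
using the definition $\psi_m\cdot\1_0 = f_m(L_0)\1_1$. Since the operators $f(L_0+m)$ and $f_m(L_0)$ both live in the commutative algebra $\C[L_0]$, they commute, and we may rewrite this as $f_m(L_0)f(L_0+m)\1_1$. The argument for $\psi_m\cdot f(L_0)\1_1 = g_m(L_0)f(L_0+m)\1_0$ is completely analogous, using $\psi_m\cdot\1_1 = g_m(L_0)\1_0$.

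There is really no obstacle here; the whole content is the elementary commutation formula. The only thing worth being careful about is that the identity $\psi_m f(L_0)=f(L_0+m)\psi_m$ must hold as an operator identity on $V$ (not merely in $U(\S(\delta))$ modulo some ideal), but this is automatic from the fact that $V$ is a genuine $\S(\delta)$-module on which the bracket relations of Definition \ref{def.3} are satisfied. Thus the proof consists of one line of bracket manipulation followed by substitution, and I would present it in at most two or three lines.
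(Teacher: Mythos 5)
Your proof is correct and follows exactly the route the paper takes: the paper itself derives the lemma from the identity $\psi_m f(L_0)=f(L_0+m)\psi_m$ (stated just before the lemma), which is precisely the consequence of $[L_0,\psi_m]=-m\psi_m$ that you establish. Your write-up merely makes explicit the induction on powers of $L_0$ and the commutativity of $\C[L_0]$, which the paper leaves implicit.
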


\begin{lemma}\label{g_m=0}
If $g_r(L_0)=0$ (resp. $f_r(L_0)=0$) for some $r\in\delta+\Z$, then $g_m(L_0)=0$ (resp. $f_m(L_0)=0$) for all $m\in\delta+\Z$.
\end{lemma}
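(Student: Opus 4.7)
\smallskip

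\noindent\textbf{Proof plan for Lemma \ref{g_m=0}.}

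The plan is to derive a one-parameter family of polynomial identities from the Lie bracket $[L_k,\psi_m]=(-m-\tfrac{k}{2})\psi_{m+k}$ and exploit the fact that all coefficients involved are scalars or polynomials in $L_0$. First I would apply both sides of this bracket to the generators $\1_0$ and $\1_1$ and use Lemma \ref{f_m12} together with the $\Vir$-action formulas to obtain the two relations
\begin{align*}
\lambda'^k(L_0+ka')\,f_m(L_0+k)-\lambda^k(L_0+m+ka)\,f_m(L_0)&=\bigl(-m-\tfrac{k}{2}\bigr)f_{m+k}(L_0),\\
\lambda^k(L_0+ka)\,g_m(L_0+k)-\lambda'^k(L_0+m+ka')\,g_m(L_0)&=\bigl(-m-\tfrac{k}{2}\bigr)g_{m+k}(L_0),
\end{align*}
valid for all $k\in\Z$ and $m\in\delta+\Z$, where $\lambda,\lambda'\in\C^*$ and $a,a'\in\C$ come from Lemma \ref{Ola} applied to $V_{\bar 0}$ and $V_{\bar 1}$.

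Now suppose $g_r(L_0)=0$. Substituting $m=r$ into the second identity immediately collapses the left hand side to $0$, leaving
$$\bigl(-r-\tfrac{k}{2}\bigr)g_{r+k}(L_0)=0\quad\text{for all }k\in\Z.$$
Hence $g_{r+k}=0$ for every $k\in\Z$ with $k\ne -2r$; equivalently $g_m=0$ for every $m\in(\delta+\Z)\setminus\{-r\}$. If $r=0$ and $\delta=0$ this already exhausts $\delta+\Z$ together with the starting assumption, and we are done.

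Otherwise $r\ne -r$, and the single value $m=-r$ must still be handled; this is the main obstacle, since at the critical shift $k=-2r$ the coefficient $(-r-k/2)$ vanishes and the recursion gives no information about $g_{-r}$. To bypass it, choose any $r'\in(\delta+\Z)\setminus\{r,-r\}$ (which is always possible because $\delta+\Z$ is infinite while only two elements are excluded). By what has already been proved, $g_{r'}=0$, so the same argument with $r$ replaced by $r'$ yields $g_m=0$ for all $m\in(\delta+\Z)\setminus\{-r'\}$. Since $-r\ne -r'$, in particular $g_{-r}=0$, and combining the two conclusions gives $g_m=0$ for every $m\in\delta+\Z$. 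The statement about $f_m$ follows by precisely the same computation, starting from the first of the two identities and substituting $m=r$ where $f_r(L_0)=0$; no essentially new ingredient is required.
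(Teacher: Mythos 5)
Your proof is correct and follows essentially the same route as the paper: derive the recursion $(-m-\tfrac{k}{2})g_{m+k}=0$ from $[L_k,\psi_m]$ applied to $\1_1$, conclude $g_m=0$ for all $m\neq -r$, then reach the exceptional index $-r$ by running the recursion from a second starting point. The paper does this last step concretely via $[L_{-2r-1},\psi_{r+1}]$ (i.e.\ your $r'=r+1$); your formulation with an arbitrary $r'\notin\{r,-r\}$ is the same idea, stated slightly more robustly.
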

\begin{proof}
Suppose $g_r(L_0)=0$.
For any $n\in\Z$ we have
$$(-r-\frac n2)g_{r+n}(L_0)\1_0=(-r-\frac n2)\psi_{r+n}\cdot\1_1=[L_n,\psi_r]\cdot\1_1=L_n\psi_r\1_1-\psi_rL_n\1_1=0.$$
So $g_{r+n}(L_0)=0$ for all $n\in\Z\setminus\{-2r\}$.
Then
$$-\frac 12 g_{-r}(L_0)\1_0=-\frac 12\psi_{-r}\cdot\1_1=[L_{-2r-1},\psi_{r+1}]\cdot \1_1=0.$$
Therefore, $g_{-r}(L_0)=0$. So $g_m(L_0)=0$ for all $m\in\delta+\Z$.

Similarly, if $f_{r}(L_0)=0$, then $f_m(L_0)=0$ for all $m\in\delta+\Z$.
\end{proof}

By Lemma \ref{f_m12}, for any $m\in\delta+\Z$ we have
$$0=\psi_m^2\cdot \1_0=g_m(L_0)f_m(L_0+m)\1_0.$$
So $g_m(L_0)f_m(L_0+m)=0$.
Since the polynomial algebra $\C[L_0]$ has no nonzero zero divisors, we have $g_m(L_0)=0$ or $f_m(L_0+m)=0$.
Without loss of generality, assume that $g_m(L_0)=0$. Then $f_m(L_0)\neq 0$. In fact, if $f_m(L_0)=0$, then $f_r(L_0)=g_r(L_0)=0$ for all $r\in\delta+\Z$ by Lemma \ref{g_m=0}. This contradicts with $\S(\delta)_{\bar 1}V\neq 0$.

For any $m\in\delta+\Z$, let $d_m=\deg(f_m)$.
\begin{lemma} {\rm (1).}  $a'-a=-d_m-\frac{1}{2}$ for any $m\in\delta+\Z$;
	{\rm (2).}  $\lambda'=\lambda.$
\end{lemma}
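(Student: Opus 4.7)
The plan is to exploit the super-bracket $[L_n,\psi_r]=(-r-\tfrac{n}{2})\psi_{r+n}$ by applying both sides to the basis vector $\1_0$. Using Lemma \ref{f_m12} together with the explicit formulas for the $L_n$-actions on $V_{\bar 0}$ and $V_{\bar 1}$, this yields the polynomial identity
\begin{equation*}
\lambda'^n(L_0+na')f_r(L_0+n)-\lambda^n(L_0+r+na)f_r(L_0)=(-r-\tfrac{n}{2})f_{r+n}(L_0)
\end{equation*}
in $\C[L_0]$, valid for every $r\in\delta+\Z$ and $n\in\Z$; call this identity $(\star)$. Both parts of the lemma will be extracted from the top two coefficients of $(\star)$.

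For (2), write $c_r\ne 0$ for the leading coefficient of $f_r$; $c_r\ne 0$ for every $r$ thanks to Lemma \ref{g_m=0}. The $L_0^{d_r+1}$-coefficient of the left-hand side of $(\star)$ is $(\lambda'^n-\lambda^n)c_r$. If $\lambda'^n\ne\lambda^n$ for some fixed $n\in\Z$, then choosing any $r\ne -n/2$ forces the right-hand side of $(\star)$ to have degree exactly $d_{r+n}$, so $d_{r+n}=d_r+1$ for every such $r$. Running the same argument with $n$ replaced by $-n$ (legitimate since $\lambda'^{-n}\ne\lambda^{-n}$ as well) gives $d_{r-n}=d_r+1$, and combining the two produces the contradiction $d_r=d_{(r-n)+n}=d_{r-n}+1=d_r+2$. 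Hence $\lambda'^n=\lambda^n$ for all $n\in\Z$, and the case $n=1$ yields $\lambda'=\lambda$.

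For (1), with $\lambda'=\lambda$ now in hand, the $L_0^{d_r+1}$-terms in $(\star)$ cancel, and a short expansion shows that the $L_0^{d_r}$-coefficient of the left-hand side equals $\lambda^n c_r[n(a'-a+d_r)-r]$. The same degree comparison, run in both directions and for suitable values of $n$, shows $d_{r+n}\le d_r$ and $d_r\le d_{r+n}$, so $d_r$ is independent of $r$; call this common degree $D$. Finally, for any $r\ne 0$ take $n=-2r\in\Z$: the right-hand side of $(\star)$ vanishes identically, so every coefficient of the left must be zero, and the $L_0^D$-coefficient gives
\begin{equation*}
\lambda^{-2r}c_r\bigl[-2r(a'-a+D)-r\bigr]=0,
\end{equation*}
which, since $r\ne 0$ and $c_r\ne 0$, forces $a'-a=-D-\tfrac12=-d_m-\tfrac12$.

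The main obstacle I expect is the degree bookkeeping, both in ruling out $\lambda'^n\ne\lambda^n$ and in showing $d_r$ is constant in $r$: one has to compare polynomial degrees across $(\star)$ in both directions while carefully tracking the exceptional value $n=-2r$, where the right-hand side collapses and the naive degree comparison breaks down. Once that one value of $n$ is handled separately, the proof reduces to a clean identification of two polynomial coefficients.
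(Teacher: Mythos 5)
Your proof is correct. It runs on the same engine as the paper's --- apply $[L_n,\psi_r]=(-r-\frac n2)\psi_{r+n}$ to $\1_0$ and compare polynomial coefficients; your identity $(\star)$ is exactly the paper's Lemma \ref{cpt6.2} with $\lambda'$ not yet identified with $\lambda$ --- but the two arguments are organized in the opposite order. The paper proves (1) first, using only the degenerate case $n=-2m$, where the right-hand side vanishes: the $L_0^{d_m+1}$-coefficient of (\ref{eq12}) gives $\lambda'^{-2m}=\lambda^{-2m}$ (hence only $\lambda'^2=\lambda^2$ when $\delta=0$), and comparing the sums of the roots of the two sides gives $a'-a=-d_m-\frac12$; it then deduces $d_1=d_0$ from (1) and removes the residual sign ambiguity in $\lambda'$ by one further computation with $[L_1,\psi_0]$. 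You instead prove (2) first and unconditionally, by showing that $\lambda'^n\neq\lambda^n$ would force $d_{r\pm n}=d_r+1$ and hence the absurdity $d_r=d_r+2$; with $\lambda'=\lambda$ in hand, constancy of $d_r$ plus the $L_0^{d_r}$-coefficient at $n=-2r$ yields $a'-a=-d_r-\frac12$ (your coefficient $\lambda^nc_r[n(a'-a+d_r)-r]$ is correct and carries the same information as the paper's root-sum comparison). Your ordering buys a slightly cleaner treatment: it avoids the detour through $\lambda'^2=\lambda^2$ when $\delta=0$, and it establishes that $d_m$ is constant including at $m=0$, where the paper's $[L_{-2m},\psi_m]$ identity degenerates to a tautology. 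The degree bookkeeping you flagged does close up: the only exceptional value is $r=-n/2$, and since $\delta+\Z$ is infinite it can always be avoided (for $\delta=\frac12$ one needs both $n=1$ and $n=2$ to link all the $d_r$, which your ``suitable values of $n$'' covers).
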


\begin{proof}
 (1). For any $m\in\delta+\Z$ we have
\begin{equation*}\begin{split}
0=&[L_{-2m},\psi_m]\cdot\1_0\\
=&L_{-2m}\psi_m\cdot\1_0-\psi_mL_{-2m}\cdot\1_0\\
=&L_{-2m}\cdot f_m(L_0)\1_1-\psi_m\cdot \lambda^{-2m}(L_0-2ma)\1_0\\
=&\lambda'^{-2m}(L_0-2ma')f_m(L_0-2m)\1_1-\lambda^{-2m}(L_0+m-2ma)f_m(L_0)\1_1.
\end{split}\end{equation*}
So
\begin{equation}\label{eq12}
\lambda'^{-2m}(L_0-2ma')f_m(L_0-2m)=\lambda^{-2m}(L_0+m-2ma)f_m(L_0)\in\C[L_0].
\end{equation}
Comparing the coefficients of $L_0^{d_m+1}$ in (\ref{eq12}), we get $\lambda'^{-2m}=\lambda^{-2m}$ for any $m\in\delta+\Z$. So $$\lambda'^2=\lambda^2\text{ if }\delta=0, \text{ and }\lambda' =\lambda \text{ if }\delta= \frac12.$$

Let $f_m(L_0)=\mu(L_0-x_1)\dots(L_0-x_{d_m})$ for some $\mu\in\C^*$ and some $x_1,\dots,x_{d_m}\in\C$.
From (\ref{eq12}) we have
$$(L_0-2ma')(L_0-2m-x_1)\dots(L_0-2m-x_{d_m})=(L_0+m-2ma)(L_0-x_1)\dots(L_0-x_{d_m}).$$
Then
$2ma'+(2m+x_1)+\dots+(2m+x_{d_m})=(2ma-m)+x_1+\dots+x_{d_m},$
i. e., $$a'-a=-d_m-\frac{1}{2},\forall m\in\delta+\Z.$$

(2).
If $\delta=\frac12$ we already have  $\lambda'=\lambda$.
Now suppose $\delta=0$. By (1), $d_1=d_0$. Note that
\begin{equation*}\begin{split}
-\frac 12 f_1(L_0)\1_1=&[L_1,\psi_0]\cdot\1_0\\
=&L_1\cdot f_0(L_0)\1_1-\psi_0\cdot\lambda(L_0+a)\1_0\\
=&\lambda'(L_0+a')f_0(L_0+1)\1_1-\lambda(L_0+a)f_0(L_0)\1_1.
\end{split}\end{equation*}
Comparing the coefficients of $L_0^{d_0+1}$ we get $\lambda'=\lambda$.
\end{proof}

\begin{lemma}\label{cpt6.2}
Suppose $m\in\delta+\Z$ and $n\in\Z$. Then
$$(-m-\frac{n}{2})f_{m+n}(L_0)=\lambda^n(L_0+na')f_m(L_0+n)-\lambda^n(L_0+m+na)f_m(L_0).$$
\end{lemma}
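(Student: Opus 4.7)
The identity is a direct consequence of the bracket relation $[L_n,\psi_m]=(-m-\tfrac{n}{2})\psi_{m+n}$ applied to the cyclic vector $\1_0$, so the plan is to compute $[L_n,\psi_m]\cdot \1_0$ in two different ways and equate the results.

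First I would evaluate the left-hand side: by the defining bracket of $\S(\delta)$ we get
$[L_n,\psi_m]\cdot\1_0=(-m-\tfrac{n}{2})\psi_{m+n}\cdot\1_0=(-m-\tfrac{n}{2})f_{m+n}(L_0)\1_1.$
Next I would expand the right-hand side as $L_n\psi_m\cdot\1_0-\psi_mL_n\cdot\1_0$. Using $\psi_m\cdot\1_0=f_m(L_0)\1_1$ together with the formula for the $\Vir$-action on $V_{\bar 1}$ (with basis element $\1_1$ and parameter $a'$), the first term equals $\lambda'^{n}(L_0+na')f_m(L_0+n)\1_1$. For the second term, $L_n\cdot\1_0=\lambda^{n}(L_0+na)\1_0$, and then applying Lemma \ref{f_m12} gives $\psi_m\cdot\lambda^{n}(L_0+na)\1_0=\lambda^{n}(L_0+m+na)f_m(L_0)\1_1$.

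Equating the two expressions for $[L_n,\psi_m]\cdot\1_0$ and using the previous lemma, which establishes $\lambda'=\lambda$ in both cases $\delta=0$ and $\delta=\tfrac12$, yields
$$(-m-\tfrac{n}{2})f_{m+n}(L_0)\1_1=\bigl(\lambda^{n}(L_0+na')f_m(L_0+n)-\lambda^{n}(L_0+m+na)f_m(L_0)\bigr)\1_1.$$
Since $\1_1$ is a free $\C[L_0]$-generator of $V_{\bar 1}$, the polynomial coefficients on both sides must coincide, giving the claimed identity.

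There is no real obstacle here: the lemma is a routine bracket computation. The only subtlety is making sure that $\lambda'$ is replaced by $\lambda$ (guaranteed by the lemma immediately preceding), and that the commutation $\psi_m f(L_0)=f(L_0+m)\psi_m$ is applied correctly, both of which have already been handled in the lemmas above.
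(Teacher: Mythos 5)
Your proposal is correct and follows essentially the same route as the paper: both compute $[L_n,\psi_m]\cdot\1_0=(-m-\tfrac{n}{2})\psi_{m+n}\cdot\1_0$ and expand $L_n\psi_m\cdot\1_0-\psi_mL_n\cdot\1_0$ using Lemma \ref{f_m12} and the $\Vir$-action on the two rank-one pieces, then cancel the free generator $\1_1$. Your explicit remark that $\lambda'=\lambda$ (from the preceding lemma) is needed to write both terms with $\lambda^n$ is a point the paper uses silently.
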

\begin{proof}
This follows from the following computation:
\begin{equation*}\begin{split}
(-m-\frac{n}{2})&f_{m+n}(L_0)\1_1=(-m-\frac{n}{2})\psi_{m+n}\cdot\1_0\\
=&L_n\psi_m\cdot\1_0-\psi_m L_n\cdot\1_0\\
=&L_n\cdot f_m(L_0)\1_1-\psi_m\cdot\lambda^n(L_0+na)\1_0\\
=&\lambda^n(L_0+na')f_m(L_0+n)\1_1-\lambda^n(L_0+m+na)f_m(L_0)\1_1.
\end{split}\end{equation*}
So
$$(-m-\frac{n}{2})f_{m+n}(L_0)=\lambda^n(L_0+na')f_m(L_0+n)-\lambda^n(L_0+m+na)f_m(L_0).$$
\end{proof}

Write $d=d_m$ for any $m\in\delta+\Z$.
\begin{lemma}\label{roots}
Suppose $d>0$. For any $m\in\delta+\Z$ with $m>0$, the polynomial $f_m(x)$ has roots $x_1,\dots,x_d\in\C$ such that $x_{i+1}-x_i=2m,i=1,\dots,d-1$, and that $x_1=2ma',x_d=2ma-3m$.
\end{lemma}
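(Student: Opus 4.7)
The strategy is to turn Lemma \ref{cpt6.2} into a functional equation that pins down the roots of $f_m$ completely. First I would specialize the identity of Lemma \ref{cpt6.2} at $n=-2m$, where the scalar $-m-n/2$ vanishes. After cancelling $\lambda^{-2m}$, this gives the polynomial identity
\begin{equation*}
(L_0-2ma')\,f_m(L_0-2m)=(L_0+m-2ma)\,f_m(L_0).
\end{equation*}
Using the previous lemma, I would substitute $a'=a-d-\tfrac12$ so that $2ma-m-2ma'=2md$. Making the affine change of variable $y=L_0-2ma'$ and defining $F(y):=f_m(y+2ma')$, the identity becomes the clean functional equation
\begin{equation*}
y\,F(y-2m)=(y-2md)\,F(y).
\end{equation*}

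Next I would solve this functional equation uniquely among degree-$d$ polynomials. Evaluating the equation at $y=2md$ forces $F(2m(d-1))=0$; evaluating at $y=2m(d-1)$ then forces $F(2m(d-2))=0$, since $m>0$ makes the coefficient $2m(d-1)-2md=-2m$ nonzero; iterating downward gives $d$ distinct roots $F(2mi)=0$ for $i=0,1,\dots,d-1$. Since $\deg F=d$, this determines $F$ up to a scalar:
\begin{equation*}
F(y)=\mu\prod_{i=0}^{d-1}(y-2mi), \qquad \mu\in\C^{*}.
\end{equation*}
(A quick check confirms this $F$ does satisfy $yF(y-2m)=(y-2md)F(y)$.)

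Translating back, $f_m(L_0)=\mu\prod_{i=0}^{d-1}\bigl(L_0-2ma'-2mi\bigr)$, so labelling the roots $x_{i+1}=2ma'+2mi$ for $i=0,\dots,d-1$ gives immediately $x_{i+1}-x_i=2m$, $x_1=2ma'$, and $x_d=2ma'+2m(d-1)=2ma-m-2m=2ma-3m$, as claimed.

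The main obstacle is the uniqueness half of the functional equation step: one must ensure that the downward recursion $F(2mi)=0$ does not stall because of a vanishing coefficient. This is exactly where the hypothesis $m>0$ (and hence $2m(d-1-k)\neq 0$ for $k<d-1$) is essential; without it the recursion could terminate prematurely and leave room for other solutions. Everything else is either a direct substitution or the elementary fact that a degree-$d$ polynomial with $d$ prescribed roots is determined up to a scalar.
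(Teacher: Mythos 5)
Your proof is correct, and it reaches the conclusion by a genuinely different mechanism than the paper, even though both start from the same identity: the specialization of Lemma \ref{cpt6.2} at $n=-2m$ is exactly equation \eqref{eq12}, namely $(L_0-2ma')f_m(L_0-2m)=(L_0+m-2ma)f_m(L_0)$. The paper works directly with this identity by comparing the multisets of roots of the two sides, $\{2ma',x_1+2m,\dots,x_d+2m\}=\{2ma-m,x_1,\dots,x_d\}$, and then orders the roots by real part to force $x_1=2ma'$, $x_d+2m=2ma-m$, and the arithmetic-progression spacing. You instead feed in the earlier relation $a'-a=-d-\tfrac12$ (which the paper's proof of this lemma never invokes) to normalize the identity into the functional equation $yF(y-2m)=(y-2md)F(y)$, and then harvest the $d$ distinct roots $2mi$, $i=0,\dots,d-1$, by a downward recursion of evaluations. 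Your route buys an explicit closed form for $f_m$ up to scalar in one stroke (something the paper only extracts in the later lemmas treating $d=0,1,2$ separately) and avoids the slightly delicate step of ordering complex roots by real part and arguing that the multiset equality forces consecutive differences of $2m$; the paper's route is shorter on the page and needs no change of variables. One small imprecision: at the step ``evaluating at $y=2m(d-1)$ forces $F(2m(d-2))=0$,'' the factor that must be nonzero is $2m(d-1)$ on the left-hand side (valid when $d\geq 2$, the only case where this step is needed), not the right-hand coefficient $-2m$, which is multiplied by the already-vanishing $F(2m(d-1))$; you do state the correct condition $2m(d-1-k)\neq 0$ for $k<d-1$ in your closing paragraph, so the argument stands.
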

\begin{proof}
Let $x_1,\dots,x_d$ be the roots of $f_m(x)$ with the positive part of $x_i$ being smaller or equal to $x_{i+1}$, $i=1,\dots,d-1$. From (\ref{eq12}) we have
$$(L_0-2ma')f_m(L_0-2m)=(L_0+m-2ma)f_m(L_0).$$
Then we have
$$\{2ma',x_1+2m,\dots,x_d+2m\}=\{2ma-m,x_1,\dots,x_d\}.$$
It is easy to see that $x_1\notin\{x_1+2m,\dots,x_d+2m\}$ and that $x_d+2m\notin\{x_1,\dots,x_d\}$.
So $x_1=2ma'$ and $x_d+2m=2ma-m$. By comparing the real parts of elements in these two sets, we have
$$x_1+2m=x_2,\dots,x_{d-1}+2m=x_d.$$
\end{proof}

\begin{lemma}
$d\leqslant 2$.
\end{lemma}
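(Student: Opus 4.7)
The plan is to suppose $d\geq 3$ and derive a contradiction from an instance of Lemma \ref{cpt6.2} that has not yet been exploited. All the structure of Lemma \ref{roots} was extracted from Lemma \ref{cpt6.2} in the special case $n=-2m$; taking instead $(m,n)=(1,1)$ (respectively $(\tfrac12,\tfrac12)$ for $\delta=\tfrac12$, since then $m+n=1\in\Z$) gives the genuinely new identity
$$-\tfrac32 f_2(L_0)=\lambda(L_0+a')f_1(L_0+1)-\lambda(L_0+1+a)f_1(L_0),$$
in which both sides, by Lemma \ref{roots} together with the already-derived leading-coefficient relation $\mu_{m+n}=\lambda^n\mu_m$, are degree-$d$ polynomials in $L_0$ whose coefficients depend only on the single parameter $a'$ (recall $a=a'+d+\tfrac12$).

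I would normalize so that $\mu_1=\lambda=1$ and match coefficients in $L_0$ one by one. A direct Vieta computation shows that the coefficients of $L_0^d$ and $L_0^{d-1}$ match automatically (the former is the defining leading-coefficient relation; the latter follows from the arithmetic-progression structure of the roots of $f_1$ and $f_2$). The first nontrivial constraint arises from the coefficient of $L_0^{d-2}$, producing a quadratic equation $P_d(a')=0$; comparing the coefficient of $L_0^{d-3}$ then yields a cubic $Q_d(a')=0$. The remainder of the proof is to show that $P_d$ and $Q_d$ have no common root whenever $d\geq 3$.

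For $d=3$ this reduces to an explicit computation: one obtains $P_3(a')=6a'^2+15a'+5$ and $Q_3(a')=a'(16a'^2+54a'+39)$. Reducing $Q_3$ modulo $P_3$ (using $6a'^2=-15a'-5$) gives $42a'+77=0$, so the only candidate for a common root is $a'=-\tfrac{11}{6}$, but $P_3(-\tfrac{11}{6})=-\tfrac{7}{3}\neq 0$, a contradiction; the alternative $a'=0$ fails since $P_3(0)=5\neq 0$. For $d\geq 4$ the same strategy applies. The main technical obstacle is to verify the incompatibility of $P_d$ and $Q_d$ uniformly in $d$. The cleanest route is probably to compute the resultant $\mathrm{Res}_{a'}(P_d,Q_d)$ as a polynomial in $d$ and show it has no integer roots $d\geq 3$; alternatively, one can argue inductively by noting that the coefficient of $L_0^{d-k}$ contributes an equation of degree $k$ in $a'$, so that a solution $a'$ becomes over-determined as soon as $d-2\geq 1$ and $d-3\geq 0$, i.e.\ $d\geq 3$.
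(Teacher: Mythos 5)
Your explicit computation for $d=3$ checks out: matching the coefficients of $L_0^{d-2}$ and $L_0^{d-3}$ in the identity $-\tfrac32 f_2(L_0)=\lambda(L_0+a')f_1(L_0+1)-\lambda(L_0+1+a)f_1(L_0)$ (with the roots of $f_1,f_2$ pinned down by Lemma \ref{roots}) does give $6a'^2+15a'+5=0$ and $a'(16a'^2+54a'+39)=0$, and these have no common root. The problem is everything after that. The lemma must exclude \emph{every} $d\geq 3$, and for $d\geq 4$ you have only a plan, not a proof: $P_d$ and $Q_d$ are never computed for general $d$, the proposed resultant $\mathrm{Res}_{a'}(P_d,Q_d)$ is not evaluated, and the fallback reasoning — that one unknown subject to two polynomial equations is ``over-determined'' — is not an argument at all, since two polynomials in one variable can certainly share a root and nothing you write rules this out for any particular $d\geq 4$. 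As it stands the proof establishes only $d\neq 3$.

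The paper's route is different and closes the gap for all $d\geq 3$ at once, by evaluating the identity of Lemma \ref{cpt6.2} at known roots rather than comparing coefficients — with the shift $n$ chosen to match the root spacing so that \emph{both} terms on the right vanish simultaneously. Concretely, take $n=2m$ in Lemma \ref{cpt6.2}:
$$-2m\,f_{3m}(L_0)=\lambda^{2m}(L_0+2ma')f_m(L_0+2m)-\lambda^{2m}(L_0+m+2ma)f_m(L_0).$$
By Lemma \ref{roots} the roots $x_1,\dots,x_d$ of $f_m$ ($m>0$) form an arithmetic progression of step $2m$, so for $i=1,\dots,d-1$ both $f_m(x_i)$ and $f_m(x_i+2m)=f_m(x_{i+1})$ vanish; hence $x_1,\dots,x_{d-1}$ are roots of $f_{3m}$. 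If $d\geq 3$, then $x_1$ and $x_2$ are two distinct roots of $f_{3m}$ with $x_2-x_1=2m$, while Lemma \ref{roots} applied to $f_{3m}$ forces distinct roots of $f_{3m}$ to differ by nonzero multiples of $6m$ — a contradiction, uniformly in $d$. (The printed proof takes $n=1$ and works with $f_{r+1}$, reaching the analogous contradiction $2r\in(2r+2)\Z$; the mechanism is the same.) This evaluation-at-consecutive-roots trick is the idea your coefficient-matching approach is missing, and it is what makes the argument independent of $d$.
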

\begin{proof}
Let $r\in\delta+\Z$ with $r>0$.

Suppose $d\geqslant 3$.
By Lemma \ref{roots}, $f_r(x)$ has roots $x_1,\dots,x_d$ such that $x_{i+1}-x_i=2r,i=1,\dots,d-1$.
Then $f_{r}(x+1)$ has roots $x_1-2r,x_1,\dots,x_{d-1}$.

Let $m=r$ and $n=1$ in Lemma \ref{cpt6.2} and we get
\begin{equation}\label{eq3/2}
(-r-\frac 12)f_{r+1}(L_0)=\lambda(L_0+a')f_r(L_0+1)-\lambda(L_0+r+a)f_r(L_0).
\end{equation}
Then $x_1,x_2$ are roots of $f_{r+1}(x)$. By Lemma \ref{roots},
$$2r=x_2-x_1\in (2r+2)\Z,$$
which  is a contradiction. So $d\leqslant 2$.
\end{proof}

Fix some $\sqrt\lambda\in\C^*$ with $(\sqrt\lambda)^2=\lambda$, and set $\lambda^r=\lambda^{r-\frac{1}{2}}\sqrt\lambda$ for any $r\in\frac 12+\Z$.

\begin{lemma}
Suppose $d=2$. Then $a=1$ or $\frac 52$, and $V$ is isomorphic to $\Omega''_{\S(\delta)}(\lambda,b)$ or $\tilde\Omega''_{\S(\delta)}(\lambda,b)$   for some $b\in\C^*$.
\end{lemma}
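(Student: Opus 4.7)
The plan is to use the root description from Lemma~\ref{roots} together with a leading-coefficient recursion extracted from Lemma~\ref{cpt6.2} to simultaneously pin down $a$ and the normalization constants of $f_m$. Since $d = 2$, earlier work gives $a' = a - \tfrac{5}{2}$, and Lemma~\ref{roots} supplies, for every $m \in (\delta + \Z)_{>0}$, a scalar $\mu_m \in \C^*$ with
$$f_m(L_0) = \mu_m\bigl(L_0 - 2ma'\bigr)\bigl(L_0 - 2ma + 3m\bigr).$$

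To determine $a$, I will take $m$ to be the smallest positive element of $\delta + \Z$ (so $m = 1$ if $\delta = 0$ and $m = \tfrac{1}{2}$ if $\delta = \tfrac{1}{2}$), set $n = 1$, and apply Lemma~\ref{cpt6.2}. The polynomial $f_{m+n}(L_0)$ produced on the left must also equal $\mu_{m+n}(L_0 - 2(m+n)a')(L_0 - 2(m+n)a + 3(m+n))$ by Lemma~\ref{roots}, so in particular the right hand side of Lemma~\ref{cpt6.2} must vanish at the predicted root $L_0 = 2(m+n)a'$. Substituting that value and simplifying via $a' = a - \tfrac{5}{2}$ produces a polynomial condition in $a$ that factors as a nonzero multiple of $(a - 1)(2a - 5)$. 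Hence $a \in \{1, \tfrac{5}{2}\}$, corresponding respectively to $a' = -\tfrac{3}{2}$ and $a' = 0$.

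To identify $\mu_m$, I compare the coefficient of $L_0^2$ on both sides of Lemma~\ref{cpt6.2}. Using $a' - a = -\tfrac{5}{2}$ the right hand side collapses to $(-m - \tfrac{n}{2})\lambda^n\mu_m$, so $\mu_{m+n} = \lambda^n \mu_m$ whenever $m + \tfrac{n}{2} \neq 0$. Fixing one positive $m_0$ and setting $b := \lambda^{-m_0}\mu_{m_0} \in \C^*$ then gives $\mu_m = b\lambda^m$ for every $m > 0$. A further application of Lemma~\ref{cpt6.2} with $m > 0$ and $n < 0$ extends the same formula $f_m(L_0) = b\lambda^m(L_0 - 2ma')(L_0 - 2ma + 3m)$ to all $m \in \delta + \Z$.

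Finally, when $a = 1$ this formula becomes $f_r(L_0) = b\lambda^r(L_0 + 3r)(L_0 + r)$, which is exactly the $\psi_r$-action defining $\Omega''_{\S(\delta)}(\lambda, b)$; when $a = \tfrac{5}{2}$ it becomes $f_r(L_0) = b\lambda^r L_0(L_0 - 2r)$, which matches $\tilde\Omega''_{\S(\delta)}(\lambda, b)$. In each case the $L_0$-coefficients $a$ and $a' = a - \tfrac{5}{2}$ on the even and odd sides also match those of the target module, so the $\C[L_0]$-linear extension of $\1_0 \mapsto 1 \in \C[z_0]$ (resp.\ $\C[\tilde z_0]$) and $\1_1 \mapsto 1 \in \C[z_1]$ (resp.\ $\C[\tilde z_1]$) is the desired $\S(\delta)$-module isomorphism. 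The main obstacle is the polynomial computation that yields the factor $(a-1)(2a-5)$: it is a finite symbolic check, but requires careful bookkeeping of the half-integer shifts in the $\delta = \tfrac{1}{2}$ case.
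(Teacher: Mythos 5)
Your proposal is correct and follows essentially the same route as the paper: both determine $a$ by evaluating the recursion of Lemma~\ref{cpt6.2} (with $n=1$) at the root $2(r+1)a'$ predicted by Lemma~\ref{roots}, obtaining the factor $\mu\lambda(2r+1)(a-1)(2a-5)$, and then propagate the normalization $\mu_m=b\lambda^m$ to all $m\in\delta+\Z$ via the same recursion before matching with $\Omega''_{\S(\delta)}(\lambda,b)$ and $\tilde\Omega''_{\S(\delta)}(\lambda,b)$. Your leading-coefficient argument for $\mu_{m+n}=\lambda^n\mu_m$ just makes explicit a step the paper leaves implicit.
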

\begin{proof}
Let $r\in\delta+\Z$ with $r>0$. Suppose $d=2$. By Lemma \ref{roots}, the roots of $f_r(x)$ are $2ra'$ and $2ra-3r$. Then
$$f_r(x)=\mu(x-2ra')(x-2ra+3r)$$
for some $\mu\in\C^*$.
From (\ref{eq3/2}) we have
\begin{equation}\label{eqfr+1}\begin{split}
(-r-\frac 12)f_{r+1}(L_0)=&\mu\lambda(L_0+a')(L_0+1-2ra')(L_0+1-2ra+3r)\\
&-\mu\lambda(L_0+r+a)(L_0-2ra')(L_0-2ra+3r).
\end{split}\end{equation}
By Lemma \ref{roots}, the roots of $f_{r+1}(x)$ are $(r+1)(2a-5)$ and $(r+1)(2a-3)$. Then
$$0=(-r-\frac 12)f_{r+1}((r+1)(2a-5))=\mu\lambda(a-1)(2a-5)(2r+1).$$
So $a=1$ or $\frac 52$.

Let $b=\mu\lambda^{-r}$. Then
$$f_r(x)=b\lambda^r(x-2ra')(x-2ra+3r).$$
From Lemma \ref{cpt6.2} we have
$$f_m(L_0)=b\lambda^m(L_0+3m)(L_0+m),\forall m\in\delta+\Z$$
if $a=1$, and
$$f_m(L_0)=b\lambda^mL_0(L_0-2m),\forall m\in\delta+\Z$$
if $a=\frac 52$. By the definition of $\Omega''_{\S(\delta)}(\lambda,b)$ and $\tilde\Omega''_{\S(\delta)}(\lambda,b)$, $V$ is isomorphic to $\Omega''_{\S(\delta)}(\lambda,b)$ or $\tilde\Omega''_{\S(\delta)}(\lambda,b)$ as $\S(\delta)$-modules.
\end{proof}

\begin{lemma}
Suppose $d=1$. Then $V$ is isomorphic to $\Omega'_{\S(\delta)}(\lambda,a,b)$ for some $b\in\C^*$.
\end{lemma}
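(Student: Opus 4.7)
The plan is to use the hypothesis $d=1$ together with Lemma \ref{roots} to pin down $f_m$ up to a scalar for positive $m$, then to propagate via Lemma \ref{cpt6.2} to all of $\delta+\Z$, and finally to exhibit an explicit isomorphism with $\Omega'_{\S(\delta)}(\lambda,a,b)$. First, since $d=1$, Lemma \ref{roots} forces $f_m(x)=\mu_m(x-2ma')$ for each positive $m\in\delta+\Z$; the two roots appearing in that lemma collapse to one, consistent with the established relation $a'=a-\tfrac{3}{2}$ (since $2ma'=2ma-3m$). Fix any positive $r\in\delta+\Z$ and set $b:=\mu_r\lambda^{-r}\in\C^*$, so that $f_r(L_0)=b\lambda^r(L_0-2ra')$.

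Next I would prove
\[
f_m(L_0)=b\lambda^m(L_0-2ma')\quad\text{for all }m\in\delta+\Z
\]
by direct application of Lemma \ref{cpt6.2}. For any $m$ with $m+r\neq 0$, apply the lemma with the roles $(m,n):=(r,m-r)$ and substitute the known expression for $f_r$. The left-hand side is $-\tfrac{1}{2}(m+r)f_m(L_0)$, while the right-hand side becomes
\[
b\lambda^m\bigl[(L_0+(m-r)a')(L_0+m-r-2ra')-(L_0+r+(m-r)a)(L_0-2ra')\bigr].
\]
A short calculation using $a'-a=-\tfrac{3}{2}$ shows that the $L_0^2$ terms cancel, the linear coefficient collapses to $-\tfrac{1}{2}(m+r)$, and the constant term collapses to $a'\,m(m+r)$; dividing by the nonzero scalar $-\tfrac{1}{2}(m+r)$ yields $f_m(L_0)=b\lambda^m(L_0-2ma')$ as desired. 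For the single remaining index $m=-r$, I would rerun the argument with an auxiliary positive index $r':=r+1\in\delta+\Z$ (so $m+r'=1\neq 0$) after first establishing the formula for $f_{r'}$ by the same computation.

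Finally I would define $\Phi:V\to\Omega'_{\S(\delta)}(\lambda,a,b)$ by $f(L_0)\1_0\mapsto f(y_0)$ and $f(L_0)\1_1\mapsto f(y_1)$, and check it is an $\S(\delta)$-module isomorphism. The $L_m$-action on the even part matches by construction; on the odd part it matches because $-\tfrac{3m}{2}+ma=ma'$. The $\psi_r$-action on $\1_1$ vanishes on both sides (since $g_r=0$), while on $\1_0$ the formula just proved gives $f_r(L_0)=b\lambda^r(L_0-2ra+3r)$, which is exactly the defining formula of $\Omega'_{\S(\delta)}(\lambda,a,b)$. Bijectivity of $\Phi$ is immediate from freeness on each parity.

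The main obstacle is the polynomial identity in the second step: a priori the left-hand side has degree $1$ while the right-hand side is a difference of two degree-$2$ polynomials, so the expected cancellation is non-obvious. The cleanness depends on the factorization $(m-r)^2+3r(m-r)+2r^2=m(m+r)$, which is precisely what allows the factor $(m+r)$ to cancel out and forces the uniform formula $f_m(L_0)=b\lambda^m(L_0-2ma')$; this reflects the rigidity enforced by $d=1$ together with $a'-a=-\tfrac{3}{2}$, and explains why the $d=1$ case produces a single family $\Omega'$ rather than a more exotic module.
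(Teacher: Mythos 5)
Your proposal is correct and follows essentially the same route as the paper: use Lemma \ref{roots} to write $f_r(x)=\mu(x-2ra')=\mu(x-2ra+3r)$ for a positive index $r$, propagate to all indices via Lemma \ref{cpt6.2} (your substitution $(m,n)=(r,m-r)$ is exactly the paper's computation of $f_{n+r}$ from $f_r$), and handle the single exceptional index $-r$ by re-basing at an auxiliary positive index ($r+1$ for you, $3r$ in the paper) before matching against the defining formulas of $\Omega'_{\S(\delta)}(\lambda,a,b)$. The cancellation you flag as the main obstacle is the same identity the paper verifies, so no gap remains.
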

\begin{proof}
Let $r\in\delta+\Z$ with $r>0$. By Lemma \ref{roots}, the root of $f_r(x)$ is $2ra-3r$. Then
$$f_r(L_0)=\mu(L_0-2ra+3r)$$
for some $\mu\in\C^*$.

By Lemma \ref{cpt6.2}, for any $n\in\Z$ we have
\begin{equation*}\begin{split}
(-r-\frac n2)f_{n+r}(L_0)=&\lambda^n(L_0+na')f_r(L_0+n)-\lambda^n(L_0+na+r)f_r(L_0)\\
=&\lambda^n\mu(L_0+na')(L_0-2ra+3r+n)\\
&-\lambda^n\mu(L_0+na+r)(L_0-2ra+3r)\\
=&\lambda^n\mu(-r-\frac n2)(L_0-(2a-3)(n+r)).
\end{split}\end{equation*}
So
$$f_{n+r}(L_0)=\lambda^n\mu(L_0-(2a-3)(n+r))$$
if $n\neq -2r$. In particular,
$$f_{3r}(L_0)=\lambda^{2r}\mu(L_0-3(2a-3)r).$$
Let $m=3r,n=-4r$ in Lemma \ref{cpt6.2} and we get
\begin{equation*}\begin{split}
-r f_{-r}(L_0)=&\lambda^{-4r}(L_0-4ra')f_{3r}(L_0-4r)-\lambda^{-4r}(L_0+3r-4ra)f_{3r}(L_0)\\
=&-r\mu\lambda^{-2r}(L_0+2ra-3r).
\end{split}\end{equation*}
Hence,
$$f_{n+r}(L_0)=\lambda^n\mu(L_0-2(n+r)a+3(n+r)),\forall n\in\Z.$$
Then
$$f_m(L_0)=b\lambda^m(L_0-2ma+3m),\forall m\in\delta+\Z,$$
where $b=\mu\lambda^{-r}$.
By the definition of $\Omega'_{\S(\delta)}(\lambda,a,b)$, $V$ is isomorphic to $\Omega'_{\S(\delta)}(\lambda,a,b)$ as $\S(\delta)$-modules.
\end{proof}

\begin{lemma}
Suppose $d=0$. Then $V$ is isomorphic to $\Omega_{\S(\delta)}(\lambda,a,b)$ for some $b\in\C^*$.
\end{lemma}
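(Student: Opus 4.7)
The plan is to combine the constancy of the $f_m$'s with the recursion in Lemma \ref{cpt6.2} and then build the isomorphism by direct comparison. Since $d=0$, each $f_m$ is a nonzero constant in $\C^*$, and from the earlier computation $a' = a - \tfrac{1}{2}$. Substituting $f_m(L_0+n) = f_m$ into Lemma \ref{cpt6.2}, the right-hand side collapses to $\lambda^n f_m\bigl[(L_0 + na') - (L_0 + m + na)\bigr] = \lambda^n f_m(-m - \tfrac{n}{2})$, so the identity becomes $(-m - \tfrac{n}{2}) f_{m+n} = (-m - \tfrac{n}{2})\lambda^n f_m$. Hence $f_{m+n} = \lambda^n f_m$ whenever $2m + n \neq 0$.

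Next I would fix any $r \in \delta + \Z$ with $r > 0$ and set $b := \lambda^{-r} f_r$; the recursion yields $f_m = b\lambda^m$ for every $m \in \delta+\Z$ with $m \neq -r$. To cover the single exceptional index, I repeat the argument starting from a different $r' \in \delta + \Z$ with $r' > 0$ and $r' \neq r$, obtaining $b' := \lambda^{-r'} f_{r'}$ and $f_m = b'\lambda^m$ for $m \neq -r'$; comparing the two formulas at any common index (say $m = r + r'$) forces $b = b'$, and the second formula then pins down $f_{-r} = b\lambda^{-r}$. Consequently $f_m = b\lambda^m$ for all $m \in \delta + \Z$, with $b \in \C^*$ by the non-triviality of $f_r$.

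Finally, to identify $V$ with $\Omega_{\S(\delta)}(\lambda, a, b)$, I define the $\C[L_0]$-linear map $\phi \colon V \to \Omega_{\S(\delta)}(\lambda, a, b)$ by $\phi(f(L_0)\1_0) = f(x_0)$ and $\phi(f(L_0)\1_1) = f(x_1)$. The even-part action matches by Lemma \ref{Ola}, since $\C[x_0]$ is $\Omega(\lambda, a)$ and $\C[x_1]$ is $\Omega(\lambda, a')$ with $a' = a - \tfrac{1}{2}$, exactly the shift encoded in the definition of $\Omega_{\S(\delta)}(\lambda, a, b)$. The odd-part identities $\psi_m \cdot \1_0 = b\lambda^m \1_1$ and $\psi_m \cdot \1_1 = 0$ (the latter from $g_m = 0$) mirror the defining formulas verbatim, so $\phi$ is an $\S(\delta)$-homomorphism; it is bijective because it sends a $\C[L_0]$-basis to a $\C[L_0]$-basis. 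The only real obstacle is the degeneracy at $n = -2m$ in the recursion, which is handled cleanly by the two-starting-point trick; everything else reduces to matching the derived formulas for the $f_m$'s against the target module.
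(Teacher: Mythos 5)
Your proof is correct and follows essentially the same route as the paper: specialize Lemma \ref{cpt6.2} to constant $f_m$ using $a'-a=-\tfrac12$, deduce $f_{m+n}=\lambda^n f_m$ off the degenerate locus $2m+n=0$, and match the resulting formula $f_m=b\lambda^m$ against the definition of $\Omega_{\S(\delta)}(\lambda,a,b)$. The only difference is cosmetic: you recover the exceptional value $f_{-r}$ by rerunning the recursion from a second base point $r'$, whereas the paper reaches it in two steps via $f_{3r}$ with $(m,n)=(3r,-4r)$; both are valid.
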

\begin{proof}
Let $r\in\delta+\Z\setminus\{0\}$. Then $f_r(L_0)=b\lambda^r$ for some $b\in\C^*$.
By Lemma \ref{cpt6.2}, for any $n\in\Z$ we have
$$(-r-\frac{n}{2})f_{r+n}(L_0)=b\lambda^{n+r}(L_0+na')-b\lambda^{n+r}(L_0+r+na)=b\lambda^{n+r}(-r-\frac{n}{2}).$$
So $f_{r+n}(L_0)=b\lambda^{n+r}$ for any $n\neq -2r$.
In particular, $f_{3r}(L_0)=b\lambda^{3r}$. Let $m=3r,n=-4r$ in Lemma \ref{cpt6.2} and we get
$$-r f_{-r}(L_0)=\lambda^{-4r}(L_0-4ra')f_{3r}(L_0-4r)-\lambda^{-4r}(L_0+3r-4ra)f_{3r}(L_0)=-rb\lambda^{-r}.$$
Hence
$$f_{m}(L_0)=b\lambda^{m},\forall m\in\delta+\Z.$$
By the definition of $\Omega_{\S(\delta)}(\lambda,a,b)$, $V$ is isomorphic to $\Omega_{\S(\delta)}(\lambda,a,b)$ as $\S(\delta)$-modules.
\end{proof}

From the discussion above, we have
\begin{theorem}\label{key}
Suppose $V$ is an $\S(\delta)$-module such that $V_{\bar 0}$ and $V_{\bar 1}$ are free $\C[L_0]$-modules of rank $1$ by restriction. If $\S(\delta)_{\bar 1}V\neq 0$, then $V$ is isomorphic to $\Omega_{\S(\delta)}(\lambda,a,b)$, $\Omega'_{\S(\delta)}(\lambda,a,b),\Omega''_{\S(\delta)}(\lambda,b)$ or $\tilde\Omega''_{\S(\delta)}(\lambda,b)$ up to a parity-change for some $\lambda,b\in\C^*$ and some $a\in\C$.
\end{theorem}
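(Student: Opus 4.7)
The plan is to translate all unknown structure on $V$ into a sequence of polynomials $f_m(L_0) \in \C[L_0]$ indexed by $m \in \delta+\Z$, then use the super-brackets of $\S(\delta)$ to pin these polynomials down. Using Lemma \ref{Ola}, the $\Vir$-structure on $V_{\bar 0}$ and $V_{\bar 1}$ yields generators $\1_0, \1_1$ and scalars $\lambda, \lambda' \in \C^*$, $a, a' \in \C$. I would then write $\psi_m \cdot \1_0 = f_m(L_0)\1_1$ and $\psi_m \cdot \1_1 = g_m(L_0)\1_0$. Since $\psi_m^2 = \tfrac12[\psi_m,\psi_m]$ acts as a scalar multiple of $z$, which must annihilate $V$ in order not to violate the $\C[L_0]$-rank-one structure on the homogeneous components, one gets $g_m(L_0) f_m(L_0+m) = 0$ in the integral domain $\C[L_0]$, forcing $f_m \equiv 0$ or $g_m \equiv 0$ for each $m$. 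Lemma \ref{g_m=0} spreads this vanishing to all indices, and up to a parity swap I may assume $g_m \equiv 0$ and $f_m \not\equiv 0$ from then on.

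Next I would push the bracket $[L_n,\psi_m] = -(m + n/2)\psi_{m+n}$ through $\1_0$, which is exactly the content of Lemma \ref{cpt6.2} and produces the key recurrence
\[
(-m-\tfrac{n}{2})\, f_{m+n}(L_0) = \lambda^n(L_0 + na')\, f_m(L_0+n) - \lambda^n(L_0 + m + na)\, f_m(L_0).
\]
Specializing $n = -2m$ gives a polynomial identity whose leading coefficient forces $\lambda' = \lambda$, whose lower-order coefficients force $a' - a = -\deg f_m - \tfrac12$ (so that $d := \deg f_m$ is independent of $m$), and whose root pattern forces, for $m > 0$, that the roots of $f_m$ form an arithmetic progression of common difference $2m$ with endpoints $2ma'$ and $2ma - 3m$. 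A separate small calculation at $m=1,\, n=1$ handles the remaining ambiguity $\lambda' = \pm\lambda$ in the $\delta = 0$ case.

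With $d$ a fixed nonnegative integer, applying the recurrence at $n = 1$ compares the roots of $f_{r+1}$ (an arithmetic progression with step $2r+2$) against two consecutive roots of $f_r$ (step $2r$), and the resulting arithmetic mismatch immediately rules out $d \geq 3$. This leaves the three cases $d \in \{0, 1, 2\}$: for $d = 0$ the recurrence collapses to $f_m(L_0) = b\lambda^m$; for $d = 1$ the unique root $2ma - 3m$ together with the recurrence yields $f_m(L_0) = b\lambda^m(L_0 - 2ma + 3m)$; for $d = 2$ the two prescribed roots plugged back into the recurrence specialized at a second index force $a \in \{1, \tfrac52\}$ and produce $f_m(L_0) = b\lambda^m(L_0 + 3m)(L_0 + m)$ or $b\lambda^m L_0(L_0 - 2m)$. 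Matching these closed forms against the defining formulas of $\Omega_{\S(\delta)}(\lambda,a,b)$, $\Omega'_{\S(\delta)}(\lambda,a,b)$, $\Omega''_{\S(\delta)}(\lambda,b)$, and $\tilde\Omega''_{\S(\delta)}(\lambda,b)$ respectively completes the classification. The main obstacle is the degree bound $d \le 2$ and the numerical coincidence $a \in \{1, \tfrac52\}$ at $d = 2$: both arise from arithmetic incompatibilities between progressions of steps $2m$ and $2m+2$, and it is not a priori transparent that the recurrence admits no higher-degree solutions and no other values of $a$.
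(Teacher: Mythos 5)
Your proposal follows essentially the same route as the paper's proof: encoding the odd action in polynomials $f_m,g_m$, killing one family via $\psi_m^2=0$ and Lemma \ref{g_m=0}, deriving the recurrence of Lemma \ref{cpt6.2}, extracting $\lambda'=\lambda$, $a'-a=-d-\tfrac12$, and the arithmetic-progression root pattern, then bounding $d\leq 2$ and matching the three degree cases to the four module families. The only cosmetic differences (e.g.\ resolving the sign of $\lambda'$ at $m=1,n=1$ rather than via $[L_1,\psi_0]$, and the slightly informal justification that $z$ acts by zero, which in the paper falls out of $g_m\equiv 0$) do not change the argument.
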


It is easy to see that all these $\S(\delta)$-modules are not simple since $V_{\bar 1}$ is a $\S(\delta)$-submodule of $V$.

\


We conclude this paper with the following questions.
\begin{itemize}
	\item Is there any way to construct new   simple smooth $\S(\delta)$-modules of nonzero level? Or  can we  classify such $\S(\delta)$-modules?
	\item Is there any way to construct new    simple $F(\delta)$-modules of nonzero level? Or  can we  classify such $\S(\delta)$-modules?
\end{itemize}

\

\noindent {\bf Acknowledgement.} { The authors would like to thank Rencai L\"u and Yao Ma for   helpful discussions. Part of the research in this paper was carried out during the visit of the first
	author at Wilfrid Laurier University in 2022.
	The first author is partially supported by CSC of China (No. 202006920055), and National Natural Science Foundation of China (Grants No. 11971440).
	The second author is partially supported by  National Natural Science Foundation of China (Grant No. 11871190) and NSERC (311907-2020).
}

\

Y. Xue: School of Sciences, Nantong University, Nantong, Jiangsu, 226019, P. R. China.

Email address: yxue@ntu.edu.cn

\
K. Zhao: Department of Mathematics, Wilfrid Laurier University, Waterloo, ON, Canada N2L 3C5, and School of Mathematical Science, Hebei Normal (Teachers) University, Shijiazhuang, Hebei, 050024 P. R. China.

Email address: kzhao@wlu.ca

\end{document}